\theoremstyle{plain}
\newtheorem{theorem}{Theorem}[section]
\newtheorem{lemma}[theorem]{Lemma}
\newtheorem{prop}[theorem]{Proposition}
\newtheorem{cor}[theorem]{Corollary}
\newtheorem{setting}[theorem]{Setting}
\newcommand{\E}{\mathbb{E}}
\newcommand{\cF}{\mathcal{F}}
\renewcommand{\P}{\mathbb{P}}
\newcommand{\R}{\mathbb{R}}
\newcommand{\Rd}{\mathbb{R}^d}
\newcommand{\N}{\mathbb{N}}
\newcommand{\Z}{\mathbb{Z}}
\newcommand{\smallsum}{\textstyle\sum}
\newcommand{\Exp}[1]{ \E \! \left[ #1 \right]}
\newcommand{\EXP}[1]{ \E  [ #1 ]}
\newcommand{\norm}[1]{ \left\| #1 \right\| }
\newcommand{\Norm}[1]{ \| #1 \| }
\newcommand{\vast}{\bBigg@{3.5}}
\newcommand{\Vast}{\bBigg@{4}}
\newcommand{\fr}{{\bf f}_r}
\newcommand{\fdr}{{\bf f}_{d,r}}
\newcommand{\partialX}[1]{\tfrac{\partial}{\partial{x_{#1}}}}
\newcommand{\partialXX}[1]{\tfrac{\partial^2}{\partial{x_{#1}}^2}}
\newcommand{\vertiii}[1]{{\left\vert\kern-0.25ex\left\vert\kern-0.25ex\left\vert #1 
		\right\vert\kern-0.25ex\right\vert\kern-0.25ex\right\vert}}
	\newcommand{\tripleNorm}[1]{\vertiii#1} 
\begin{document}

\title{Overcoming the curse of dimensionality \\
in the numerical approximation of \\
Allen--Cahn partial differential equations \\
via truncated full-history recursive \\
multilevel Picard approximations}

\author{
	Christian Beck$^1$, 
	Fabian Hornung$^{2,3}$,
	Martin Hutzenthaler$^4$, \\
	Arnulf Jentzen$^5$, 
	and Thomas Kruse$^6$ 
	\bigskip
	\\
	\small{$^1$ Department of Mathematics, ETH Zurich, Z\"urich,}\\
	\small{Switzerland, e-mail: christian.beck@math.ethz.ch} 
	\\
	\small{$^2$ Department of Mathematics, ETH Zurich, Z\"urich,}\\
	\small{Switzerland, e-mail: fabianhornung89@gmail.com}
	\\
	\small{$^3$ Faculty of Mathematics, Karlsruhe Institute of Technology, Karlsruhe,}\\
	\small{Germany, e-mail: fabianhornung89@gmail.com} 
	\\
	\small{$^4$ Faculty of Mathematics, University of Duisburg-Essen, Essen, }\\
	\small{Germany, e-mail: martin.hutzenthaler@uni-due.de}
	\\
	\small{$^5$ Department of Mathematics, ETH Zurich, Z\"urich,}\\
	\small{Switzerland, e-mail: arnulf.jentzen@sam.math.ethz.ch} 
	\\
	\small{$^6$ Institute of Mathematics, University of Gie{\ss}en, Gie{\ss}en,} \\
	\small{Germany, e-mail: thomas.kruse@math.uni-giessen.de}
}

\maketitle

\begin{abstract}
One of the most challenging problems in applied mathematics is the approximate solution of nonlinear partial differential equations (PDEs) in high dimensions. Standard deterministic approximation methods like finite differences or finite elements suffer from the curse of dimensionality in the sense that the computational effort grows exponentially in the dimension. 
In this work we overcome this difficulty in the case of reaction-diffusion type PDEs with a locally Lipschitz continuous coervice nonlinearity (such as Allen--Cahn PDEs) by introducing and analyzing truncated variants of the recently introduced full-history recursive multilevel Picard approximation schemes.  
\end{abstract}
  
 \pagebreak
  
\tableofcontents

\section{Introduction}
\label{sect:intro}
One of the most challenging problems in applied mathematics is the approximate solution of nonlinear partial differential equations (PDEs) in high dimensions. Standard deterministic approximation methods like finite differences or finite elements suffer from the curse of dimensionality in the sense that the computational effort grows exponentially in the dimension. Linear parabolic PDEs of second order can be solved approximately without the curse of dimensionality by means of Monte Carlo averages. In the last few years, several probabilistic approximation methods, which seem in certain situations to be capable of efficiently approximating high-dimensional nonlinear PDEs, have been proposed. For instance, the articles \cite{BouchardEtAl2017NumericalApproximation,Henry-Labordere2012counterparty,Henry-LabordereOudjaneTanTouziWarin2019BranchingDiffusion,Henry-LabordereTanTouzi2014NumericalAlgorithmBSDEBranching} propose and study approximation methods based on stochastic representations of solutions of PDEs by means of branching diffusion processes (cf., for example, \cite{McKean1975ApplicationBrownianMotion,Skorokhod1964BranchingDiffusion,Watanabe1965BranchingProcess} for theoretical relations and cf., for example, \cite{Warin2018NestingMonteCarlo} for a related method), 
the articles \cite{DeepSplitting,DeepKolmogorov,BeckEJentzen2017MachineLearning,becker2018deep,berg2018unified,chan2018machine,EHanJentzen2017DeepLearning, EYu17,FarahmandNabiNikovski17, FujiiTakahashiTakahashi17,goudenege2019machine,HanEJentzen2017SolvingHighdimensionalPDEs, han2018convergence,	HenryLabordere17,hure2019some,jacquier2019deep,LongLuMaDong17,lye2019deep,magill2018neural,Raissi18,SirignanoDGM2017} propose and study approximation methods based on the reformulation of PDEs as stochastic learning problems involving deep artificial neural networks, and the articles  \cite{hutzenthaler2016multilevel,EHutzenthalerJentzenKruse2019MLP,hutzenthaler2019proof,Overcoming,hutzenthaler2019overcoming,hutzenthaler2017multi} propose and study full-history recursive multilevel Picard (MLP) approximation methods. In particular, the articles \cite{Overcoming,hutzenthaler2019overcoming} prove that MLP approximation schemes do indeed overcome the curse of dimensionality in the numerical approximation of semilinear parabolic PDEs. More formally, Theorem 3.8 in \cite{Overcoming} shows that MLP approximation schemes are able to approximate the solutions of semilinear parabolic PDEs with a root mean square error of size $\varepsilon \in (0,\infty)$ and a computational effort which grows at most polynomially both in the dimension as well as in the reciprocal $\nicefrac{1}{\varepsilon}$ of the desired approximation accuracy. However, the articles \cite{Overcoming,hutzenthaler2019overcoming} are only applicable in the case where the nonlinearity is globally Lipschitz continuous and, to the best of our knowledge, there exists no result in the scientific literature which shows for every $T\in (0,\infty)$ that the solution of a semilinear parabolic PDE with a non-globally Lipschitz continuous nonlinearity can be efficiently approximated at time $T$ without the curse of dimensionality. 

In this work we overcome this difficulty by introducing a truncated variant of the MLP approximation schemes introduced in \cite{hutzenthaler2016multilevel,Overcoming} and by proving that this truncated MLP approximation scheme succeeds in approximately solving reaction-diffusion type PDEs with a locally Lipschitz continuous coercive nonlinearity (such as Allen--Cahn type PDEs) without the curse of dimensionality. 
More specifically, \cref{thm:main_theorem_forward_formulation} in \cref{sec:algorithm} below, which is the main result of this article, proves under suitable assumptions that for every $\delta\in (0,\infty)$, $\varepsilon\in (0,1]$ it holds that the proposed truncated MLP approximations can achieve a root mean square error of size at most $\varepsilon$ with a computational effort of order $d\varepsilon^{-(2+\delta)}$. To illustrate the findings of this article in more detail, we now present in \cref{intro:theorem} below a special case of  \cref{thm:main_theorem_forward_formulation}.

\begin{theorem}\label{intro:theorem}
Let
	$\delta,\kappa,T \in (0,\infty)$,
	$\Theta = \cup_{n\in\N} \Z^n$, 
	$f \in C^1(\R,\R)$, 
	$(\mathbf{f}_d)_{d\in\N} \subseteq C(\R,\R)$, 
	$(u_d)_{d\in\N} \subseteq C([0,T]\times\R^d,\R)$, 
assume that 
	$f^{\prime}$ 
is at most polynomially growing, 
assume for every 
	$d\in\N$,
	$t\in (0,T]$,
	$x\in\R^d$, 
	$v\in\R$
that 
	$vf(v)\leq \kappa(1+v^2)$,
	$|u_d(0,x)|\leq \kappa$,
	$u_d|_{(0,T]\times\R^d}\in C^{1,2}((0,T]\times\R^d,\R)$, 
	$\inf_{c\in\R}(\sup_{s\in [0,T]} \sup_{y=(y_1,\ldots,y_d)\in\R^d} (e^{c(|y_1|^2+\ldots+|y_d|^2)}|u_d(s,y)|)) < \infty$,
	$\mathbf{f}_d(v) = f(\min\{\ln(1+\ln(d)),\max\{-\ln(1+\ln(d)),v\}\})$,
and 
	\begin{equation}\label{intro:allen_cahn_pde}
	(\tfrac{\partial}{\partial t}u_d)(t,x) 
	= 
	(\Delta_x u_d)(t,x) 
	+ 
	f(u_d(t,x)),
	\end{equation} 
let 
	$(\Omega,\mathcal{F},\P)$ 
be a probability space, 
let
	$\mathcal{R}^{\theta}\colon \Omega \to [0,1]$, $\theta\in\Theta$, 
be independent $\mathcal{U}_{[0,1]}$-distributed random variables, 
let 
	$W^{d,\theta}\colon [0,T]\times\Omega\to\R^d$, 
	$d\in\N$,
	$\theta\in\Theta$,
be independent standard Brownian motions, 
assume that 
	$(\mathcal{R}^{\theta})_{\theta\in\Theta}$ 
and
	$(W^{d,\theta})_{(d,\theta)\in\N\times\Theta}$ 
are independent, 
let 
	$R^{\theta}\colon [0,T]\times\Omega \to [0,T]$, $\theta\in\Theta$,  
satisfy for every 
	$\theta\in\Theta$,
	$t\in [0,T]$ 
that 
	$R^{\theta}_t = t \mathcal{R}^{\theta}$,
for every 
	$d\in \N$,
	$s\in [0,T]$, 
	$t\in [s,T]$,
	$x\in \R^d$, 
	$\theta\in \Theta$ 
let 
	$X^{d,\theta}_{s,t,x}\colon\Omega\to\R^d$
satisfy 
	$
	X^{d,\theta}_{s,t,x}
	= 
	x + \sqrt{2}(W^{d,\theta}_t - W^{d,\theta}_s)
	$, 
let 
	$U^{d,\theta}_{n,M}\colon [0,T]\times\R^d\times\Omega\to\R$, 
	$d,M\in\N$,
	$\theta\in\Theta$, $n\in\N_0$, 
satisfy for every 
	$d,n,M\in\N$,
	$\theta\in\Theta$,
	$t\in [0,T]$, 
	$x\in \R^d$ 
that
	$U^{d,\theta}_{0,M}(t,x) = 0$ 
and 
	\begin{equation}\label{intro:mlp}
	\begin{split}
	& 
	U^{d,\theta}_{n,M}(t,x) 
	= 
	\sum_{k=1}^{n-1} \frac{t}{M^{n-k}} 
	\Bigg[ 
	\sum_{m=1}^{M^{n-k}} 
	\bigg(
	\mathbf{f}_{M}\Big( 
	U^{d,(\theta,k,m)}_{k,M}\big(R^{(\theta,k,m)}_{t}, 
	X^{d,(\theta,k,m)}_{R^{(\theta,k,m)}_{t},t,x}
	\big)
	\Big)
	\\ 
	& 
	-
	\mathbf{f}_{M}\Big( 
	U^{d,(\theta,-k,m)}_{k-1,M}\big( R^{(\theta,k,m)}_{t}, 
	X^{d,(\theta,k,m)}_{R^{(\theta,k,m)}_{t},t,x}\big)
	\Big)
	\bigg)
	\Bigg]
	+ 
	\frac{1}{M^n}\!\left[ 
	\sum_{m=1}^{M^n} 
	\left( 
	u_d(0,X^{d,(\theta,0,-m)}_{0,t,x}) 
	+ 
	t \, f(0)
	\right)
	\right], 
	\end{split}
	\end{equation} 
and for every 
	$d,M\in\N$, 
	$n\in\N_0$
let 
	$\mathfrak{C}_{d,n,M}\in\N_0$
be the number of realizations of scalar standard normal random variables which are used to compute one realization of 
	$U^{d,0}_{n,M}(T,0) \colon \Omega \to \R$ 
(cf.~\cref{cor:polynomially_growing} for a precise definition). 
Then there exist
	$\mathfrak{N} \colon (0,1] \to \N$ 
and 
	$c\in\R$ 
such that for every 
	$d\in\N$, 
	$\varepsilon\in (0,1]$ 
it holds that 
	$
	\mathfrak{C}_{d,\mathfrak{N}_{\varepsilon},\mathfrak{N}_{\varepsilon}} 
	\leq c d \varepsilon^{-(2+\delta)}
	$
and 
	\begin{equation}
	\sup_{x\in\R^d}
	\left(  
	\Exp{
		\big|
		U^{d,0}_{\mathfrak{N}_{\varepsilon},\mathfrak{N}_{\varepsilon}}(T,x)
		-
		u_d(T,x)
		\big|^2}  
	\right)^{\!\nicefrac12} 
	\leq \varepsilon. 
	\end{equation}
\end{theorem}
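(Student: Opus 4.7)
\emph{Overall strategy.} The plan is to deduce \cref{intro:theorem} from the main result \cref{thm:main_theorem_forward_formulation} by verifying its hypotheses in the present setting. The crucial intermediate step is a dimension-free a priori bound on $u_d$, after which the truncated nonlinearity $\mathbf{f}_M$ coincides with $f$ on the range of $u_d$ for every sufficiently large $M$, so that the main theorem may be applied with an effective Lipschitz constant that grows only sub-polynomially in $M$.

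\emph{Steps.} First, I would establish that $\sup_{d\in\N,\,t\in[0,T],\,x\in\R^d}|u_d(t,x)|\le K$ for some $K\in(0,\infty)$ depending only on $\kappa$ and $T$. Formally $v(t,x):=|u_d(t,x)|^2$ satisfies $\partial_t v\le \Delta_x v+2\kappa(1+v)$ thanks to the coercivity $vf(v)\le\kappa(1+v^2)$; a Feynman--Kac / maximum-principle argument on $\R^d$, made rigorous by the Gaussian growth hypothesis (which ensures finite expectations of $v$ along Brownian paths), then gives the Grönwall-type bound $v(t,x)\le (\kappa^2+1)e^{2\kappa T}$. Second, choose $M^\star\in\N$ so that $\ln(1+\ln M^\star)>K$; then for every $M\ge M^\star$ and every $v\in[-K,K]$ one has $\mathbf{f}_M(v)=f(v)$, so $u_d$ also solves the PDE $\partial_t u_d=\Delta_x u_d+\mathbf{f}_M(u_d)$. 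The function $\mathbf{f}_M$ is globally Lipschitz with constant $L_M\le\sup_{|v|\le\ln(1+\ln M)}|f'(v)|$, and since $f'$ is at most polynomially growing, $L_M$ grows at most like $(\ln\ln M)^{p}$ for some $p\in(0,\infty)$. Third, apply \cref{thm:main_theorem_forward_formulation} to this truncated Lipschitz problem, producing error bounds roughly of the form $e^{c_1 TL_M}(c_2TL_M/\sqrt{M})^n$ and cost bounds roughly of the form $c_3 d(3M)^n$. Finally, set $n=M=:N$ and let $\mathfrak{N}_\varepsilon$ be the smallest $N\in\N$ for which the error bound does not exceed $\varepsilon$. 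Since $L_N$ grows only sub-polynomially in $N$, $\mathfrak{N}_\varepsilon$ grows at most like $\log(\varepsilon^{-1})/\log\log(\varepsilon^{-1})$, so plugging this into the cost bound and using $\log((3N)^N)\le (2+\delta)\log(\varepsilon^{-1})+O(1)$ yields $\mathfrak{C}_{d,\mathfrak{N}_\varepsilon,\mathfrak{N}_\varepsilon}\le c d\,\varepsilon^{-(2+\delta)}$ for an appropriate $c\in(0,\infty)$.

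\emph{Main obstacle.} The principal difficulty is the a priori bound: one must make the maximum-principle / stochastic-representation argument rigorous on the unbounded domain $\R^d$ using only $u_d|_{(0,T]\times\R^d}\in C^{1,2}$ together with the Gaussian-decay hypothesis, rather than appealing to comparison theorems on bounded domains. The remaining steps are essentially book-keeping, but Step 4 crucially exploits the very slow ($\log\log$) growth of the truncation level: the threshold $\ln(1+\ln M)$ is calibrated so that the factor $L_M^n$ is absorbed into $M^{-n/2}$ while keeping $\mathfrak{N}_\varepsilon$ small enough for the polynomial-in-$\varepsilon^{-1}$ complexity to survive; a faster-growing truncation (e.g.\ $\ln d$ or $d$) would destroy this bound.
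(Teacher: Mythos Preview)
Your proposal is correct and follows essentially the same route as the paper: the paper proves the a~priori bound via a maximum principle (\cref{thm:reactionDiffusionEquationWithCoerciveNonlinearity}, \cref{cor:backwardReactionDiffusionEquationWithCoerciveNonlinearity}), then feeds this into the globally Lipschitz machinery through the chain \cref{thm:main_theorem_forward_formulation} $\to$ \cref{cor_lip} $\to$ \cref{cor:polynomially_growing}, with the key growth condition $\limsup_{n\to\infty} L(\varrho_n)/\ln(n)=0$ (exactly your $(\ln\ln M)^p=o(\ln M)$ observation) driving the complexity estimate in \cref{comp_error_vs_comp_cost}. The only cosmetic difference is in the a~priori step: rather than your Gr\"onwall-type bound on $v=|u_d|^2$, the paper absorbs the linear term up front by setting $v(t,x)=e^{-2\kappa t}(1+|u_d(t,x)|^2)$, checks $\partial_t v\le \Delta_x v$, and then invokes the maximum principle for heat subsolutions on $\R^d$ under the Gaussian growth hypothesis (\cref{lem:maxPrincipleForHeatEquation}); this is equivalent to your approach but avoids any stochastic representation for the bound itself.
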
 
\cref{intro:theorem} above is an immediate consequence of \cref{cor:polynomially_growing} in \cref{sec:applications} below. \cref{cor:polynomially_growing} follows from \cref{cor_lip}   which, in turn, is deduced from \cref{thm:main_theorem_forward_formulation}, the main result of this article. 
\cref{intro:theorem} establishes under suitable assumptions that for every 
	$\delta\in (0,\infty)$ 
there exists 
	$c \in (0,\infty)$ 
such that for every $d\in\N$ the solution $u_d\colon 
[0,T]\times\R^d\to\R$ of the reaction-diffusion type partial differential equation in \eqref{intro:allen_cahn_pde} can be approximated by the MLP approximation scheme in \eqref{intro:mlp} with a root mean square error of size $\varepsilon \in (0,\infty)$ while the computational effort is bounded by $c d 
\varepsilon^{-(2+\delta)}$. The numbers $\mathfrak{C}_{d,n,M}$, $d,M\in\N$, $n\in\N_0$, in \cref{intro:theorem} model the computational effort. The nonlinearity $f\colon\R\to\R$ in \cref{intro:theorem} is required to be locally Lipschitz continuous (which follows from the hypothesis in \cref{intro:theorem} that $f^{\prime}$ is continuous) and to satisfy a coercivity type condition in the sense that there exists $\kappa\in\R$ such that for all $v\in\R$ it holds that $vf(v)\leq \kappa(1+v^2)$. 
This coercivity type condition together with the growth assumption on the solutions $u_d\colon 
[0,T]\times\R^d\to\R$, $d\in\N$, allows us to deduce in 
\cref{sec:maximum_principle} that the solutions $u_d\colon [0,T]\times\R^d\to\R$, $d\in\N$, are uniformly bounded. 
In particular, \cref{cor:backwardReactionDiffusionEquationWithCoerciveNonlinearity} in \cref{sec:maximum_principle} yields that 
there exists $\mathfrak{M}\in\N$ such that for every 	
	$M\in [\mathfrak{M},\infty)\cap\N$,  
	$d\in \N$,
	$t\in [0,T]$, 
	$x\in \R^d$ 
it holds that 
	$(\tfrac{\partial}{\partial t}u_d)(t,x) 
	= 
	(\Delta_x u_d)(t,x) + \mathbf{f}_M(u_d(t,x))$. 
The fact that for every $d,M\in\N$ it holds that $(\tfrac{\partial}{\partial t}u_d)(t,x) 
= 
(\Delta_x u_d)(t,x) + \mathbf{f}_M(u_d(t,x))$, $(t,x)\in [0,T]\times\R^d$, is a parabolic PDE with a globally Lipschitz continuous 
nonlinearity then permits us to bring the machinery from \cite{Overcoming} into play. This will finally allow us to prove \cref{intro:theorem} (see \cref{sec:maximum_principle,sec:algorithm} for details). We note that although \cref{intro:theorem} uses the assumption that the nonlinearity $f\colon\R\to\R$ satisfies the coercivity type condition that there exists $\kappa\in\R$ such that for all 
	$v\in\R$ 
it holds that 
	$vf(v) \leq \kappa (1 + v^2)$, 
explicit knowledge of the coercivity constant $\kappa$ is not required for the implementation of the MLP approximation scheme.  

The remainder of this article is organized as follows. In \cref{sec:maximum_principle} we present elementary a priori bounds for classical solutions of reaction-diffusion type PDEs with coercive nonlinearities. 
In \cref{sec:algorithm} we introduce truncated MLP  approximation schemes and we provide upper bounds for the root mean square distance between the truncated MLP approximations and the exact solution of the PDE under consideration. In \cref{sec:complexity} we combine the error estimates from \cref{sec:algorithm} with estimates for the computational effort for truncated MLP approximations to show under suitable assumptions that for every $\delta\in (0,\infty)$ a root mean square error of size $\varepsilon\in (0,1]$ can be achieved by truncated MLP approximations with a computational effort of order $d\varepsilon^{-(2+\delta)}$. In \cref{sec:applications} we specialize our findings to Allen--Cahn type PDEs. 

\section{A priori bounds for reaction-diffusion equations with coercive nonlinearity}
\label{sec:maximum_principle}

For convenience of the reader, we recall the following well-known maximum principle for subsolutions of the heat equation (cf., e.g., John \cite[Pages 216--217 in Section 1 in Chapter 7]{John}).


\begin{lemma}\label{lem:maxPrincipleForHeatEquation}
	Let $d\in\N$, $T\in (0,\infty)$, $v\in C([0,T]\times\R^d,\R),$ assume that $v|_{(0,T]\times\R^d} \in C^{1,2}((0,T]\times\R^d,\R),$
	assume for every $t\in (0,T],$ $x\in\R^d$ that
	\begin{equation}\label{eq:subheat}
	(\tfrac{\partial }{\partial t}v)(t,x) \leq (\Delta_x v)(t,x), 
	\end{equation}
	let $\norm{\cdot}\colon \R^d\to [0,\infty)$ be the d-dimensional Euclidean norm,
	and assume that
	\begin{equation}\label{eq:assumptionExponentialGrowth}
	\inf_{a\in \R} \sup_{(t,x)\in [0,T]\times\R^d} (e^{a\|x\|^2}v(t,x)) < \infty.
	\end{equation}
	Then it holds that 
	\begin{equation}\label{eq:maxPrincipleForHeatEquationInequalityToProve}
	\sup_{(t,x)\in [0,T]\times\R^d} v(t,x) = \sup_{x\in\R^d} v(0,x) . 
	\end{equation}
\end{lemma}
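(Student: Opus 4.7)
The plan is to apply Tychonoff's classical maximum-principle argument for the heat equation on $\R^d$. First I would unpack \eqref{eq:assumptionExponentialGrowth} to obtain explicit constants $\alpha, C \in [0,\infty)$ such that $v(t,x) \leq C e^{\alpha \|x\|^2}$ for all $(t,x) \in [0,T] \times \R^d$; the case $\alpha = 0$ reduces to a bounded upper envelope and is straightforward. To handle an arbitrary $T \in (0,\infty)$, I would fix $N \in \N$ with $T/N < 1/(4\alpha)$ and iterate the ``core'' short-time argument on the subintervals $[kT/N, (k+1)T/N]$, $k = 0, 1, \ldots, N-1$, exploiting that $v|_{\{kT/N\}\times\R^d}$ obeys the same bound $Ce^{\alpha\|x\|^2}$ as the initial datum.

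The core argument on a short interval $[0,\tau]$ with $\tau < 1/(4\alpha)$ goes as follows. Fix $\tau' \in (\tau, 1/(4\alpha))$ and introduce the positive classical solution
\begin{equation*}
\phi(t,x) = (\tau' - t)^{-d/2} \exp\!\left( \tfrac{\|x\|^2}{4(\tau' - t)} \right)
\end{equation*}
of the heat equation $\partial_t \phi = \Delta_x \phi$ on $[0,\tau] \times \R^d$. For $\varepsilon, \delta \in (0,\infty)$ set $w_{\varepsilon,\delta}(t,x) = v(t,x) - \delta t - \varepsilon \phi(t,x)$. The assumption \eqref{eq:subheat} combined with $\partial_t \phi = \Delta_x \phi$ then yields the \emph{strict} parabolic inequality $\partial_t w_{\varepsilon,\delta} - \Delta_x w_{\varepsilon,\delta} \leq -\delta < 0$ on $(0,\tau] \times \R^d$. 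Moreover, because $1/(4(\tau'-t)) > \alpha$ for every $t \in [0,\tau]$, the function $\varepsilon \phi$ grows in $\|x\|$ strictly faster than the upper bound $Ce^{\alpha\|x\|^2}$ for $v$, so $w_{\varepsilon,\delta}(t,x) \to -\infty$ as $\|x\| \to \infty$, uniformly in $t \in [0,\tau]$.

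Consequently, $w_{\varepsilon,\delta}$ attains its supremum at some $(t_0, x_0) \in [0,\tau] \times \R^d$. If $t_0 > 0$, elementary calculus at the maximum forces $\partial_t w_{\varepsilon,\delta}(t_0, x_0) \geq 0$ and $\Delta_x w_{\varepsilon,\delta}(t_0, x_0) \leq 0$, in contradiction with the strict inequality above. Hence $t_0 = 0$ and therefore $v(t,x) \leq \sup_{y \in \R^d} v(0,y) + \delta t + \varepsilon \phi(t,x)$ for every $(t,x) \in [0,\tau] \times \R^d$. Letting first $\varepsilon \downarrow 0$ and then $\delta \downarrow 0$ delivers $\sup_{(t,x) \in [0,\tau] \times \R^d} v(t,x) \leq \sup_{y \in \R^d} v(0,y)$; the reverse inequality is immediate from continuity of $v$, establishing \eqref{eq:maxPrincipleForHeatEquationInequalityToProve} on $[0,\tau]$ and, by the iteration described above, on all of $[0,T]$.

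The main technical obstacle will be the uniform-in-$t$ calibration of the auxiliary function: $\tau'$ must be large enough that $\phi$ remains well defined (and smooth) on the closed interval $[0,\tau]$, yet small enough that $1/(4(\tau'-t))$ strictly exceeds $\alpha$, so that $\varepsilon\phi$ dominates the growth of $v$ at spatial infinity uniformly in $t$. The linear-in-$t$ penalty $-\delta t$ is the small but decisive device that upgrades the non-strict subsolution property \eqref{eq:subheat} into a strict parabolic inequality, which is what rules out an interior maximum of $w_{\varepsilon,\delta}$.
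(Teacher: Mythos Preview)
Your proposal is correct and follows essentially the same route as the paper: the Tychonoff-type barrier $\phi(t,x)=(\tau'-t)^{-d/2}\exp(\|x\|^2/(4(\tau'-t)))$ together with a linear penalty $-\delta t$ to force a strict parabolic inequality, an interior-maximum contradiction on a short time interval, and then iteration over subintervals to reach arbitrary $T$. The only cosmetic differences are the parametrisation (the paper writes the barrier as $\Phi_\varepsilon$ with singularity at $T+\varepsilon$ and lets the barrier coefficient $M\searrow 0$ before the penalty parameter $\varepsilon\searrow 0$) and the explicit bookkeeping of the induction step; your argument would go through without change.
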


\begin{proof}[Proof of Lemma~\ref{lem:maxPrincipleForHeatEquation}]
Throughout this proof assume w.l.o.g. that 
	\begin{equation}
	\sup_{x\in\R^d} v(0,x) < \infty,
	\end{equation}
let $\Phi_{\varepsilon}\colon [0,T]\times\R^d\to\R,$  $\varepsilon \in (0,\infty),$	be the functions which satisfy for every $\varepsilon \in (0,\infty),$ $t\in [0,T]$, $x\in\R^d$ that 
	\begin{equation}\label{eq:definition_of_Phi}
	\Phi_{\varepsilon}(t,x) 
	= 
	[4\pi(T+\varepsilon-t)]^{-d/2} 
	\exp\!\left( \frac{\|x\|^2}{4(T+\varepsilon-t)} \right), 
	\end{equation}
and let $w_{\varepsilon,M}\colon [0,T]\times\R^d\to\R,$ $\varepsilon, M \in (0,\infty),$ 
be the functions which satisfy for every $\varepsilon, M \in (0,\infty),$ $t\in [0,T]$, $x\in\R^d$ that
	\begin{equation}\label{eq:definition_of_w_epsilon_M}
	w_{\varepsilon,M}(t,x) 
	= 
	v(t,x) 
	- 
	M\Phi_{\varepsilon}(t,x) 
	- 
	\varepsilon t. 
	\end{equation}
Observe that for every 
	$\varepsilon\in (0,\infty)$, 
	$t\in [0,T]$, 
	$x\in\R^d$ 
it holds that 
	\begin{equation}\label{eq:gradient_of_Phi}
	(\nabla_x \Phi_{\varepsilon})(t,x) 
	= 
	\Phi_{\varepsilon}(t,x)\left[\frac{x}{2(T+\varepsilon-t)}\right]. 
	\end{equation}
This implies that for every $\varepsilon\in (0,\infty)$, $t\in [0,T]$, $x=(x_1,\dots,x_d)\in\R^d$ it holds that 
	\begin{equation}
	\begin{split}
	(\partialXX{k}\Phi_\varepsilon)(t,x)
	&=(\partialX{k}\Phi_\varepsilon)(t,x) \left[\frac{x_k}{2(T+\varepsilon-t)}\right]
	+\Phi_\varepsilon(t,x) \left[\frac{1}{2(T+\varepsilon-t)}\right]\\
	&=\Phi_\varepsilon(t,x) \left[\frac{x_k}{2(T+\varepsilon-t)}\right]^2 +\Phi_\varepsilon(t,x) \left[\frac{1}{2(T+\varepsilon-t)}\right]\\
	&=\Phi_\varepsilon(t,x) \left(\frac{\vert x_k\vert^2}{4(T+\varepsilon-t)^2}+\frac{1}{2(T+\varepsilon-t)}\right).
	\end{split}
	\end{equation}
	Therefore, we obtain that for every $\varepsilon\in (0,\infty)$, $t\in [0,T]$, $x=(x_1,\dots,x_d)\in\R^d$ it holds that
	\begin{equation}\label{eq:calculationLaplacian}
	\begin{split}
	(\Delta_x \Phi_{\varepsilon})(t,x)&=\sum_{k=1}^d (\partialXX{k}\Phi_\varepsilon)(t,x)\\
	&=\sum_{k=1}^d \left[ \Phi_\varepsilon(t,x) \left(\frac{\vert x_k\vert^2}{4(T+\varepsilon-t)^2}+\frac{1}{2(T+\varepsilon-t)}\right)\right]\\
	&=\Phi_\varepsilon(t,x)\left(\frac{\norm{x}^2}{4(T+\varepsilon-t)^2}+\frac{d}{2(T+\varepsilon-t)}\right).
	\end{split}
	\end{equation}
	%
	%
	%
	%
	%
	Moreover, observe that for every $\varepsilon\in (0,\infty)$, $t\in [0,T]$, $x\in\R^d$ it holds
	that 
	\begin{equation}
	\begin{split}
	&(\tfrac{\partial}{\partial t}\Phi_{\varepsilon})(t,x) 
	=
	-\frac{d}{2} [4\pi(T+\varepsilon-t)]^{-d/2-1} [-4\pi] \exp\!\left( \frac{\|x\|^2}{4(T+\varepsilon-t)} \right)
	\\&+[4\pi(T+\varepsilon-t)]^{-d/2} \exp\!\left( \frac{\|x\|^2}{4(T+\varepsilon-t)} \right)\left[\frac{\|x\|^2}{4(T+\varepsilon-t)^2}\right]
	\\&= [4\pi(T+\varepsilon-t)]^{-d/2} \exp\!\left( \frac{\|x\|^2}{4(T+\varepsilon-t)} \right) \left[-\frac{d}{2}\left(\frac{-4\pi}{4\pi(T+\varepsilon-t)}\right)+\frac{\|x\|^2}{4(T+\varepsilon-t)^2}\right]
	\\&=		
	\left(\frac{d}{2(T+\varepsilon-t)} + \frac{\|x\|^2}{4(T+\varepsilon-t)^2}
	\right) \Phi_{\varepsilon}(t,x) . 
	\end{split}
	\end{equation}
	Combining this with \eqref{eq:calculationLaplacian} ensures that for every $\varepsilon\in (0,\infty)$, $t\in [0,T]$, $x\in\R^d$ it holds that 
	\begin{equation}\label{eq:solution_of_heat_eqn}
	(\tfrac{\partial}{\partial t}\Phi_{\varepsilon})(t,x) = (\Delta_x \Phi_{\varepsilon})(t,x). 
	\end{equation}
	This, \eqref{eq:subheat}, and \eqref{eq:definition_of_w_epsilon_M}
	imply that for every $\varepsilon, M\in (0,\infty)$, $t\in (0,T]$, $x\in\R^d$ it holds that 
	\begin{equation}\label{eq:inequality_for_w}
	\begin{split}
	(\tfrac{\partial}{\partial t}w_{\varepsilon,M})(t,x) 
	&= 
	(\tfrac{\partial }{\partial t}v)(t,x) - M(\tfrac{\partial}{\partial t}\Phi_{\varepsilon})(t,x) - \varepsilon 
	\\&= (\tfrac{\partial }{\partial t}v)(t,x) - M (\Delta_x \Phi_{\varepsilon})(t,x) - \varepsilon \\
	&\leq (\Delta_x v)(t,x) - M (\Delta_x \Phi_{\varepsilon})(t,x) - \varepsilon 
	= 
	(\Delta_x w_{\varepsilon,M})(t,x) - \varepsilon . 
	\end{split}	
	\end{equation}
	In addition, observe that \eqref{eq:assumptionExponentialGrowth} ensures that there exist $C\in [0,\infty)$ and $a\in (0,\infty)$ such that for every $t\in [0,T],$ $x\in\R^d$ it holds that
	\begin{equation}\label{eq:exponentialBoundForV}
	v(t,x)\leq C e^{a\|x\|^2}.
	\end{equation}
	To prove \eqref{eq:maxPrincipleForHeatEquationInequalityToProve} we distinguish 
	between the case $T<\frac{1}{4a}$ and the case $T\geq\frac{1}{4a}$. 
	We first prove \eqref{eq:maxPrincipleForHeatEquationInequalityToProve} in the 
	case $T<\frac{1}{4a}$. 
	Observe that \eqref{eq:definition_of_Phi}, \eqref{eq:definition_of_w_epsilon_M} and \eqref{eq:exponentialBoundForV} imply that for every $\varepsilon\in (0,\frac{1}{4a}-T)$, $M\in(0,\infty),$  $t\in [0,T],$ $x\in\R^d$ it holds that  
	\begin{equation}\label{eq:estimating_w_epsilon_M}
	\begin{split}
	w_{\varepsilon,M}(t,x) 
	&\leq v(t,x)-M\Phi_{\varepsilon}(t,x)\\
	&=v(t,x)-\frac{M}{[4\pi(T+\varepsilon-t)]^{d/2}}\exp\!\left(\frac{\|x\|^2}{4(T+\varepsilon-t)}\right) \\
	& \leq 
	v(t,x) - \frac{M}{[4\pi(T+\varepsilon)]^{d/2}}\exp\!\left(\frac{\|x\|^2}{4(T+\varepsilon)}\right) 
	\\
	& \leq 
	C e^{a\|x\|^2} - \frac{M}{[4\pi(T+\varepsilon)]^{d/2}} \exp\!\left(\frac{\|x\|^2}{4(T+\varepsilon)}\right) 
	\\
	& = 
	e^{a\|x\|^2} \left[ C - \frac{M}{[4\pi(T+\varepsilon)]^{d/2}}  
	\exp\!\left(\|x\|^2\left[\frac{1}{4(T+\varepsilon)}-a\right]\right)\right]. 
	\end{split}
	\end{equation}
	Furthermore, observe that the hypothesis that $v\in C([0,T]\times \Rd,\R)$ and the fact that the interval $[0,T]$ is compact ensure that $\inf_{s\in[0,T]} v(s,0)\in\R.$ Hence, we obtain that for every $\varepsilon,M\in (0,\infty)$ it holds that 
	\begin{equation}
	\min\!\left\{0,\left[\inf_{s\in[0,T]} v(s,0)\right]-\varepsilon T-\frac{M}{(4\pi\varepsilon)^{d/2}}\right\}\in\R.
	\end{equation}
	This and the fact that for every $\varepsilon \in (0,\frac{1}{4a}-T)$ it holds that $a<\frac{1}{4(T+\varepsilon)}$ imply that 
	there exists a function $R=(R_{\varepsilon,M})_{(\varepsilon,M)\in (0,\infty)^2}\colon (0,\infty)^2\to (0,\infty)$ such that	
	for every $\varepsilon\in (0,\frac{1}{4a}-T)$, $M\in(0,\infty)$ it holds that
	\begin{equation}\label{eq:choiceOfRZero}
	\begin{split}
	&e^{a\vert R_{\varepsilon,M}\vert^2}\left[C-\frac{M}{[4\pi(T+\varepsilon)]^{d/2}}\exp\!\left(\vert R_{\varepsilon,M}\vert^2\left[\frac{1}{4(T+\varepsilon)}-a\right]\right)\right]\\
	&<\min\!\left\{0,\left[\inf_{s\in[0,T]} v(s,0)\right]-\varepsilon T-\frac{M}{(4\pi\varepsilon)^{d/2}}\right\}.
	\end{split}
	\end{equation}
	Combining this with \eqref{eq:definition_of_Phi} and \eqref{eq:definition_of_w_epsilon_M}
	proves that for every $\varepsilon\in (0,\frac{1}{4a}-T)$, $M\in(0,\infty),$ $t\in [0,T]$ it holds that
	\begin{equation}\label{eq:estimateBall}
	\begin{split}
	&e^{a\vert R_{\varepsilon,M}\vert^2}\left[C-\frac{M}{[4\pi(T+\varepsilon)]^{d/2}}\exp\!\left(\vert R_{\varepsilon,M}\vert^2\left[\frac{1}{4(T+\varepsilon)}-a\right]\right)\right]\\
	&<\left[\inf_{s\in[0,T]} v(s,0)\right]-\varepsilon T-\frac{M}{(4\pi\varepsilon)^{d/2}}
	\leq\left[\inf_{s\in[0,T]} v(s,0)\right]-\varepsilon t-\frac{M}{(4\pi\varepsilon)^{d/2}}\\
	&\leq\left[\inf_{s\in[0,T]} v(s,0)\right]-\varepsilon t-\frac{M}{[4\pi(T+\varepsilon-t)]^{d/2}}
	\leq v(t,0)-\varepsilon t-\frac{M}{[4\pi(T+\varepsilon-t)]^{d/2}}\\&=v(t,0)-\varepsilon t-M\Phi_{\varepsilon}(t,0)
	= w_{\varepsilon,M}(t,0)
	\leq\sup_{\substack{(s,x)\in[0,T]\times\Rd,\\ \norm{x}\leq R_{\varepsilon,M}}} w_{\varepsilon,M}(s,x).
	\end{split}
	\end{equation}	
	This, \eqref{eq:estimating_w_epsilon_M}, and \eqref{eq:choiceOfRZero} ensure that for every $\varepsilon\in (0,\frac{1}{4a}-T)$, $M\in(0,\infty),$ $t\in[0,T],$ $x\in \Rd$ with $\norm{x}>R_{\varepsilon,M}$ it holds that
	\begin{equation}
	\begin{split}
	w_{\varepsilon,M}(t,x) 
	& \leq 
	e^{a\|x\|^2} \left[ C - \frac{M}{[4\pi(T+\varepsilon)]^{d/2}}  
	\exp\!\left(\|x\|^2\left[\frac{1}{4(T+\varepsilon)}-a\right]\right)\right] 
	\\& \leq e^{a\|x\|^2} \left[ C - \frac{M}{[4\pi(T+\varepsilon)]^{d/2}}  
	\exp\!\left(\vert R_{\varepsilon,M}\vert^2\left[\frac{1}{4(T+\varepsilon)}-a\right]\right)\right]
	\\& \leq e^{a\vert R_{\varepsilon,M}\vert^2} \left[ C - \frac{M}{[4\pi(T+\varepsilon)]^{d/2}}  
	\exp\!\left(\vert R_{\varepsilon,M}\vert^2\left[\frac{1}{4(T+\varepsilon)}-a\right]\right)\right]
	\\&\leq\sup_{\substack{(s,y)\in[0,T]\times\Rd,\\ \norm{y}\leq R_{\varepsilon,M}}} w_{\varepsilon,M}(s,y).
	\end{split}
	\end{equation}	
	Therefore, we obtain that for every $\varepsilon\in (0,\frac{1}{4a}-T)$, 
	$M\in (0,\infty)$ it holds that
	\begin{equation}
	\sup_{(t,x)\in[0,T]\times\Rd} w_{\varepsilon,M}(t,x)=\sup_{\substack{(t,x)\in[0,T]\times\Rd,\\ \norm{x}\leq R_{\varepsilon,M}}} w_{\varepsilon,M}(t,x).
	\end{equation}
	The fact that for every $\varepsilon\in (0,\frac{1}{4a}-T)$, 
	$M\in (0,\infty)$ it holds that the 
	function $w_{\varepsilon,M}\colon [0,T]\times\R^d\to\R$ is continuous hence
	demonstrates that for every $\varepsilon\in (0,\frac{1}{4a}-T)$, $M\in (0,\infty)$ 
	there exists $(t_{\varepsilon,M},x_{\varepsilon,M})\in [0,T]\times\R^d$ such that it holds that
	\begin{equation}\label{eq:defMaximum}
	\sup_{(t,x)\in [0,T]\times\R^d} w_{\varepsilon,M}(t,x) = w_{\varepsilon,M}(t_{\varepsilon,M},x_{\varepsilon,M}) . 
	\end{equation}
	The fact that for every $\varepsilon\in (0,\frac{1}{4a}-T)$, $M\in (0,\infty)$ it holds that $w_{\varepsilon,M}|_{(0,T]\times\R^d} \in C^{1,2}((0,T]\times\R^d,\R)$ therefore ensures that for every $\varepsilon\in (0,\frac{1}{4a}-T)$, $M\in (0,\infty),$ $v\in \Rd$ with 
	$t_{\varepsilon,M}>0$ it holds that
	%
	\begin{equation}\label{eq:inequalities_at_maximum1}
	(\tfrac{\partial }{\partial t}w_{\varepsilon,M})(t_{\varepsilon,M},x_{\varepsilon,M})=0
	\qquad\text{and}\qquad 
	\big((\tfrac{\partial^2}{\partial{x}^2} w_{\varepsilon,M})(t_{\varepsilon,M},x_{\varepsilon,M})\big)(v,v) \leq 0. 
	\end{equation}
	Hence, we obtain that for every $\varepsilon\in (0,\frac{1}{4a}-T)$, $M\in (0,\infty)$ with $t_{\varepsilon,M}>0$ it holds that
	\begin{equation}\label{eq:inequalities_at_maximum}
	(\tfrac{\partial }{\partial t}w_{\varepsilon,M})(t_{\varepsilon,M},x_{\varepsilon,M})\ge 0
	\end{equation}
	and 
	\begin{equation}
	(\Delta_x w_{\varepsilon,M})(t_{\varepsilon,M},x_{\varepsilon,M})=\sum_{k=1}^d (\partialXX{k} w_{\varepsilon,M})(t_{\varepsilon,M},x_{\varepsilon,M}) \leq 0. 
	\end{equation}
	This and \eqref{eq:inequality_for_w} imply that for every 
	$\varepsilon\in (0,\frac{1}{4a}-T)$, $M\in (0,\infty)$ with 
	$t_{\varepsilon,M}>0$ it holds that 
	\begin{equation}
	0 \leq (\tfrac{\partial}{\partial t}w_{\varepsilon,M})(t_{\varepsilon,M},x_{\varepsilon,M}) 
	\leq (\Delta_x w_{\varepsilon,M})(t_{\varepsilon,M},x_{\varepsilon,M}) - \varepsilon 
	\leq -\varepsilon < 0. 
	\end{equation}
	Hence, we obtain for every $\varepsilon\in (0,\frac{1}{4a}-T)$, $M\in (0,\infty)$ that $t_{\varepsilon,M}=0$. 
	Combining this with \eqref{eq:defMaximum} proves that
	for every $\varepsilon\in (0,\frac{1}{4a}-T)$, $M\in (0,\infty)$  it holds
	that 
	\begin{equation}
	\sup_{(t,x)\in [0,T]\times\R^d} w_{\varepsilon,M}(t,x) 
	= w_{\varepsilon,M}(t_{\varepsilon,M},x_{\varepsilon,M})
	=
	w_{\varepsilon,M}(0,x_{\varepsilon,M}) \leq 
	\sup_{x\in\R^d} w_{\varepsilon,M}(0,x). 
	\end{equation} 
	This and \eqref{eq:definition_of_w_epsilon_M} imply that for every $t\in [0,T]$, $x\in\R^d$, 
	$\varepsilon \in (0,\frac{1}{4a}-T)$, $M\in (0,\infty)$ it holds that
	\begin{equation}
	\begin{split}
	v(t,x) & = w_{\varepsilon,M}(t,x) + M\Phi_\varepsilon(t,x)  + \varepsilon t 
	\leq \left[\sup_{y\in\R^d} w_{\varepsilon,M}(0,y)\right]
	+ M\Phi_\varepsilon(t,x)  + \varepsilon t 
	\\
	& \leq \left[\sup_{y\in\R^d} v(0,y)\right] 
	+
	M\Phi_\varepsilon(t,x)  + \varepsilon t . 
	\end{split}
	\end{equation}
	Therefore, we obtain that for every $t\in [0,T]$, $x\in\R^d$, $\varepsilon\in (0,\frac{1}{4a}-T)$ it holds 
	that 
	\begin{equation}
	\begin{split}
	v(t,x) 
	& \leq \liminf_{M \searrow 0} \left(\left[ 
	\sup_{y\in\R^d} v(0,y) \right]
	+
	M\Phi_\varepsilon(t,x)  + \varepsilon t 
	\right) 
	= 
	\left[\sup_{y\in\R^d} v(0,y)\right] + \varepsilon t. 
	\end{split}
	\end{equation}
	Hence, we obtain that for every $t\in [0,T]$, $x\in\R^d$ it holds that 
	\begin{equation}
	v(t,x) \leq \liminf_{\varepsilon \searrow 0} \left(\left[\sup_{y\in\R^d} v(0,y)\right] + \varepsilon t\right)
	= 
	\sup_{y\in\R^d} v(0,y). 
	\end{equation}
	This establishes \eqref{eq:maxPrincipleForHeatEquationInequalityToProve} 
	in the case $T<\frac{1}{4a}$.
	We now prove 
	\eqref{eq:maxPrincipleForHeatEquationInequalityToProve} in the case $T\geq \frac{1}{4a}$. 
	For this let $k\in \N$  and $\mathcal{T} \in (0,\frac{1}{8a}]$ be the real numbers which satisfy that 
	\begin{equation}
	T = \frac{k}{8a} + \mathcal{T},
	\end{equation}
	let  $\tau_l\in\R,$ $l\in \left\{0,1,\dots,k+1\right\}$, be the real numbers which satisfy  for all $l\in \{0,1,\dots, k\}$ that
	\begin{equation}
	\tau_l=\frac{l}{8a}\qquad \text{and}\qquad \tau_{k+1}=T,
	\end{equation}	
	and let $\mathfrak{v}_l\colon [0,\tau_{l+1}-\tau_l]\times \Rd\to\R,$ $l\in \left\{0,1,\dots,k\right\}$, be the functions which satisfy for all $l\in \left\{0,1,\dots,k\right\},$ $t\in [0,\tau_{l+1}-\tau_l],$ $x\in\Rd$ that 
	\begin{equation}\label{eq:defVl}
	\mathfrak{v}_l(t,x)=v(t+\tau_l,x).
	\end{equation}
	Next we claim that for every $l\in \{0,1,\dots, k+1\}$ it holds that
	\begin{equation}\label{eq:maxPrincipleForHeatEquationInequalityToProveInduction}
	\sup_{(t,x)\in [0,\tau_l]\times\R^d} v(t,x)= \sup_{x\in\R^d} v(0,x). 
	\end{equation}
	We now prove \eqref{eq:maxPrincipleForHeatEquationInequalityToProveInduction} by induction on $l\in \{0,1,\dots, k+1\}.$ Observe that the fact that
	\begin{equation}
	\sup_{(t,x)\in [0,\tau_0]\times \Rd} v(t,x)=\sup_{(t,x)\in \{0\}\times \Rd} v(t,x)=\sup_{x\in\Rd} v(0,x)
	\end{equation}
	establishes \eqref{eq:maxPrincipleForHeatEquationInequalityToProveInduction} in the base case $l=0.$
	For the induction step $\{0,1,\dots,k\}\ni l\to l+1 \in \{1,2,\dots,k+1\}$ assume that there exists $l\in \left\{0,1,\dots, k\right\}$ such that 
	\begin{equation}\label{eq:maxPrincipleForHeatEquationInequalityToProveInduction2}
	\sup_{(t,x)\in [0,\tau_l]\times\R^d} v(t,x)= \sup_{x\in\R^d} v(0,x). 
	\end{equation}
	%
	In addition, note that \eqref{eq:subheat},
	\eqref{eq:exponentialBoundForV}, 
	and
	\eqref{eq:defVl}
	ensure that for every $t\in (0,\tau_{l+1}-\tau_l],$ $x\in \Rd$ it holds that 
	\begin{equation}
	(\tfrac{\partial }{\partial t}\mathfrak{v}_l)(t,x)=(\tfrac{\partial }{\partial t}v)(t+\tau_l,x) \leq (\Delta_x v)(t+\tau_l,x)=(\Delta_x \mathfrak{v}_l)(t,x)
	\end{equation}
	and 
	\begin{equation}
	\sup_{(t,x)\in [0,\tau_{l+1}-\tau_l]\times\R^d} (e^{-a\|x\|^2}\mathfrak{v}_l(t,x)) 
	=
	\sup_{(t,x)\in [\tau_l,\tau_{l+1}]\times\R^d} (e^{-a\|x\|^2}v(t,x)) 
	\leq C < \infty.  
	\end{equation}
	This, \eqref{eq:maxPrincipleForHeatEquationInequalityToProveInduction2}, and \eqref{eq:maxPrincipleForHeatEquationInequalityToProve} 
	in the case $ T<\frac{1}{4a}$ show that 
	\begin{equation}
	\begin{split}
	\sup_{(t,x)\in [\tau_l,\tau_{l+1}]\times\R^d} v(t,x)
	&=\sup_{(t,x)\in [0,\tau_{l+1}-\tau_l]\times\R^d} \mathfrak{v}_l(t,x) 
	=\sup_{x\in\R^d} \mathfrak{v}_l(0,x)=\sup_{x\in\R^d} v(\tau_l,x)\\
	&\leq  \sup_{(t,x)\in [0,\tau_{l}]\times\R^d} v(t,x)
	= \sup_{x\in\R^d} v(0,x).
	\end{split}
	\end{equation}Therefore, we obtain that
	\begin{equation}
	\begin{split}
	\sup_{(t,x)\in [0,\tau_{l+1}]\times\R^d} v(t,x)
	&=\max\!\left\{	\sup_{(t,x)\in [0,\tau_{l}]\times\R^d} v(t,x),	\sup_{(t,x)\in [\tau_l,\tau_{l+1}]\times\R^d} v(t,x)\right\}\\
	&=\max\!\left\{	\sup_{x\in\R^d} v(0,x),	\sup_{(t,x)\in [\tau_l,\tau_{l+1}]\times\R^d} v(t,x)\right\}\\
	&\leq \sup_{x\in\R^d} v(0,x). 
	\end{split}
	\end{equation}
	Induction hence proves \eqref{eq:maxPrincipleForHeatEquationInequalityToProveInduction}. Furthermore, note that \eqref{eq:maxPrincipleForHeatEquationInequalityToProveInduction} and the fact that $T=\tau_{k+1}$ imply that 
	\begin{equation}
	\sup_{(t,x)\in [0,T]\times\R^d} v(t,x)=\sup_{(t,x)\in [0,\tau_{k+1}]\times\R^d} v(t,x)= \sup_{x\in\R^d} v(0,x).
	\end{equation}
	This establishes \eqref{eq:maxPrincipleForHeatEquationInequalityToProve} in the 
	case $T\geq\frac{1}{4a}$. The proof of Lemma~\ref{lem:maxPrincipleForHeatEquation} 
	is thus completed.
\end{proof}

\begin{cor}\label{cor:maxPrincipleForHeatEquation}
	Let $d\in\N$, $T\in (0,\infty)$, $v\in C([0,T]\times\R^d,\R),$ assume that $v|_{(0,T]\times\R^d} \in C^{1,2}((0,T]\times\R^d,\R),$
	assume for every $t\in (0,T],$ $x\in\R^d$ that
	\begin{equation}\label{eq:subheatCor}
	(\tfrac{\partial }{\partial t}v)(t,x) \leq (\Delta_x v)(t,x), 
	\end{equation}
	let $\norm{\cdot}\colon \R^d\to [0,\infty)$ be a norm,
	and assume that
	\begin{equation}\label{eq:assumptionExponentialGrowthCor}
	\inf_{a\in \R} \sup_{(t,x)\in [0,T]\times\R^d} (e^{a\|x\|^2}v(t,x)) < \infty.
	\end{equation}
	Then it holds that 
	\begin{equation}\label{eq:maxPrincipleForHeatEquationInequalityToProveCor}
	\sup_{(t,x)\in [0,T]\times\R^d} v(t,x)= \sup_{x\in\R^d} v(0,x) . 
	\end{equation}
\end{cor}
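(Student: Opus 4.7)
The plan is to reduce directly to \cref{lem:maxPrincipleForHeatEquation}, which already proves the same conclusion \eqref{eq:maxPrincipleForHeatEquationInequalityToProveCor} in the special case that $\|\cdot\|$ is the Euclidean norm. Since the conclusion does not mention $\|\cdot\|$ at all and the subsolution hypothesis \eqref{eq:subheatCor} coincides with \eqref{eq:subheat}, the only thing to verify is that the Gaussian growth condition \eqref{eq:assumptionExponentialGrowthCor}, phrased in terms of the arbitrary norm $\|\cdot\|$ supplied by the corollary, implies the analogous condition \eqref{eq:assumptionExponentialGrowth} relative to the Euclidean norm.

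For this reduction I would invoke the standard equivalence of norms on the finite-dimensional space $\R^d$: there exists $c \in (0,\infty)$ such that $\|x\| \leq c \|x\|_e$ for every $x \in \R^d$, where $\|\cdot\|_e$ denotes the Euclidean norm. From \eqref{eq:assumptionExponentialGrowthCor} one extracts $a \in \R$ and $C \in [0,\infty)$ with $v(t,x) \leq C \exp(-a\|x\|^2)$ for every $(t,x) \in [0,T]\times\R^d$. If $a \geq 0$, then already $v(t,x) \leq C$, and if $a < 0$, then $\|x\|^2 \leq c^2 \|x\|_e^2$ together with the negative coefficient $a$ yields $v(t,x) \leq C \exp(|a| c^2 \|x\|_e^2)$. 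Choosing $\tilde a = -\max\{0,|a| c^2\} \in \R$ therefore gives $\sup_{(t,x)\in[0,T]\times\R^d}(e^{\tilde a \|x\|_e^2}v(t,x)) < \infty$, so that the hypothesis \eqref{eq:assumptionExponentialGrowth} of \cref{lem:maxPrincipleForHeatEquation} is satisfied with the Euclidean norm.

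An application of \cref{lem:maxPrincipleForHeatEquation} then produces \eqref{eq:maxPrincipleForHeatEquationInequalityToProveCor} immediately, since the two conclusions are identical verbatim. The entire proof therefore consists of the short norm-equivalence translation sketched above; the delicate heat-kernel comparison argument has already been carried out inside \cref{lem:maxPrincipleForHeatEquation}, so no real obstacle is anticipated here.
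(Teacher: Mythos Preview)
Your proposal is correct and follows essentially the same approach as the paper: both proofs reduce to \cref{lem:maxPrincipleForHeatEquation} by using the equivalence of norms on $\R^d$ to transfer the growth hypothesis \eqref{eq:assumptionExponentialGrowthCor} from the arbitrary norm $\|\cdot\|$ to the Euclidean norm, and then invoke the lemma directly. The only cosmetic difference is that the paper fixes $a\le 0$ from the outset and handles possible negativity of $v$ via $\max\{0,v(t,x)\}$, whereas you do a case split on the sign of $a$; both treatments are equivalent.
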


\begin{proof}[Proof of Corollary~\ref{cor:maxPrincipleForHeatEquation}]
	Throughout this proof let $\tripleNorm{\cdot}\colon\Rd\to [0,\infty)$ be the 
	$d$-dimensional Euclidean norm 
	and let $c\in(0,\infty)$ be the real number which satisfies that 
	\begin{equation}\label{eq:normEquivalence}
	c=\sup_{x\in\Rd\setminus\{0\}} \left(\frac{\norm{x}}{\tripleNorm{x}}\right).
	\end{equation}
	Note that \eqref{eq:assumptionExponentialGrowthCor} ensures that there exists $a\in (-\infty,0]$ such that 
	\begin{equation}\label{eq:expGrowthUCor}
	\sup_{(t,x)\in [0,T]\times\R^d} ( e^{a\|x\|^2} v(t,x) ) < \infty.
	\end{equation}
	In addition, observe that \eqref{eq:normEquivalence} implies that for all $x\in\Rd\setminus\{0\}$ it holds that 
	\begin{equation}
	a\Norm{x}^2=a \left[\frac{\norm{x}}{\tripleNorm{x}}\right]^2 \tripleNorm{x}^2\ge a c^2 \tripleNorm{x}^2.
	\end{equation}
	Combining this with \eqref{eq:expGrowthUCor} demonstrates that for every $t\in[0,T],$ $x\in\Rd$ it holds that
	\begin{equation}
	\begin{split}
	e^{ a c^2 \tripleNorm{x}^2}v(t,x)&
	\le e^{ a c^2 \tripleNorm{x}^2}\max\!\left\{0,v(t,x)\right\}
	\\&\le e^{ a\Norm{x}^2}\max\!\left\{0,v(t,x)\right\}\\
	&=\max\!\left\{0,e^{a\Norm{x}^2}v(t,x)\right\}
	\\&\le \max\!\left\{0,\sup_{(s,y)\in [0,T]\times\R^d} ( e^{a\|y\|^2} v(s,y) )\right\}<\infty.
	\end{split}
	\end{equation} 
	This ensures that 
	\begin{equation}
	\sup_{(t,x)\in [0,T]\times\R^d} ( e^{ac^2\tripleNorm{x}^2} v(t,x) )\le \max\!\left\{0,\sup_{(t,x)\in [0,T]\times\R^d} ( e^{a\|x\|^2} v(t,x) )\right\}<\infty.
	\end{equation}
	Hence, we obtain that
	\begin{equation}
	\inf_{\alpha\in\R}\sup_{(t,x)\in [0,T]\times\R^d} ( e^{\alpha\tripleNorm{x}^2} v(t,x) )<\infty.
	\end{equation}
	%
	%
	Combining this with \eqref{eq:subheatCor} enables us to apply Lemma \ref{lem:maxPrincipleForHeatEquation} to obtain that 
	\begin{equation}
	\sup_{(t,x)\in [0,T]\times\R^d} v(t,x)= \sup_{x\in\R^d} v(0,x). 
	\end{equation}
	The proof of Corollary~\ref{cor:maxPrincipleForHeatEquation} 
	is thus completed.
\end{proof}

\begin{theorem}\label{thm:reactionDiffusionEquationWithCoerciveNonlinearity}
	Let $d\in\N$, $T\in (0,\infty)$, $c\in \R$,  
	let $f\colon [0,T]\times\R^d\times\R\to\R$ be a function, 
	assume for every $t\in [0,T]$, $x\in\R^d$, $y\in\R$ that 
	\begin{equation}\label{eq:growthOfNonlinearityMaxPrinciple}
	y f(t,x,y) \leq c ( 1 + y^2 ),
	\end{equation}
	let $u\in C([0,T]\times\R^d,\R),$ 
	assume that $u|_{(0,T]\times\R^d} \in C^{1,2}((0,T]\times\R^d,\R)$, 
	let $\norm{\cdot}\colon \R^d\to [0,\infty)$ be a norm,
	assume that 
	\begin{equation}\label{eq:assumptionExponentialGrowthThm}
	\inf_{a\in \R} \sup_{(t,x)\in [0,T]\times\R^d} (e^{a\|x\|^2}|u(t,x)|) < \infty,
	\end{equation}
	and assume for every $t\in (0,T]$, $x\in\R^d$ that 
	\begin{equation}\label{eq:pde_for_uMaxPrinciple}
	(\tfrac{\partial u}{\partial t})(t,x) = (\Delta_x u)(t,x) + f(t,x,u(t,x)). 
	\end{equation}
	Then it holds for every 
	$t\in [0,T]$ that 
	\begin{equation}
	\sup_{x\in\R^d} |u(t,x)| 
	\leq 
	\left[\sup_{x\in\R^d}(1+  |u(t,x)|^2)\right]^{\nicefrac12}
	\leq e^{c t}\left[1+\sup_{x\in\R^d} |u(0,x)|^2\right]^{\nicefrac12}. 
	\end{equation}
\end{theorem}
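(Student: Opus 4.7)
[Proof proposal for Theorem \ref{thm:reactionDiffusionEquationWithCoerciveNonlinearity}]
The plan is to reduce everything to the heat equation maximum principle already established in Corollary~\ref{cor:maxPrincipleForHeatEquation}. The natural quantity to track is $v(t,x) := 1 + |u(t,x)|^2$, since its time derivative decomposes in a way that couples the coercivity hypothesis with the heat-equation structure. More precisely, first I would compute, using \eqref{eq:pde_for_uMaxPrinciple},
\begin{equation*}
(\tfrac{\partial}{\partial t}v)(t,x) = 2u(t,x)\big[(\Delta_x u)(t,x) + f(t,x,u(t,x))\big] \qandq (\Delta_x v)(t,x) = 2|(\nabla_x u)(t,x)|^2 + 2u(t,x)(\Delta_x u)(t,x),
\end{equation*}
so that by discarding the nonnegative gradient term and invoking \eqref{eq:growthOfNonlinearityMaxPrinciple} one gets
\begin{equation*}
(\tfrac{\partial}{\partial t}v)(t,x) - (\Delta_x v)(t,x) = 2u(t,x)f(t,x,u(t,x)) - 2|(\nabla_x u)(t,x)|^2 \leq 2c(1+|u(t,x)|^2) = 2c\,v(t,x).
\end{equation*}

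Next I would kill the $2c\,v$ term with an integrating factor. Define $w(t,x) := e^{-2ct}v(t,x)$; a direct calculation using the inequality above shows that $\partial_t w - \Delta_x w = e^{-2ct}(\partial_t v - \Delta_x v - 2cv) \leq 0$ on $(0,T]\times\R^d$, i.e.~$w$ is a classical subsolution of the heat equation. The regularity of $w$ on $(0,T]\times\R^d$ and continuity on $[0,T]\times\R^d$ are inherited from the corresponding hypotheses on $u$. To apply Corollary~\ref{cor:maxPrincipleForHeatEquation} I still need the exponential growth condition \eqref{eq:assumptionExponentialGrowthCor} for $w$: from \eqref{eq:assumptionExponentialGrowthThm} pick $a\in\R$ with $\sup_{(t,x)}(e^{a\|x\|^2}|u(t,x)|) =: K < \infty$; then $|u(t,x)|^2 \leq K^2 e^{-2a\|x\|^2}$, and with $\alpha := -2|a|-1 < 0$ (say) one checks that $e^{\alpha\|x\|^2} w(t,x) \leq e^{-2ct}(e^{\alpha\|x\|^2} + K^2 e^{(\alpha-2a)\|x\|^2})$ is bounded uniformly on $[0,T]\times\R^d$, so the infimum in \eqref{eq:assumptionExponentialGrowthCor} is finite.

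With these ingredients in place, Corollary~\ref{cor:maxPrincipleForHeatEquation} yields $\sup_{(t,x)\in[0,T]\times\R^d} w(t,x) = \sup_{x\in\R^d} w(0,x) = \sup_{x\in\R^d}(1+|u(0,x)|^2)$, and unwinding the definition of $w$ gives, for every $t\in[0,T]$ and $x\in\R^d$, $1+|u(t,x)|^2 \leq e^{2ct}\sup_{y\in\R^d}(1+|u(0,y)|^2)$. Taking the square root, using the pointwise inequality $|u(t,x)|\leq(1+|u(t,x)|^2)^{1/2}$, and taking the supremum over $x$ delivers the two bounds in the conclusion. The only step requiring any thought is the PDE computation linking the coercivity estimate to a subheat inequality; once $|\nabla_x u|^2$ has been identified as the term to throw away, everything else is algebra plus one invocation of Corollary~\ref{cor:maxPrincipleForHeatEquation}.
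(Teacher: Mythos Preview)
Your proof is correct and follows essentially the same approach as the paper: both define the auxiliary function $e^{-2ct}(1+|u(t,x)|^2)$, verify it is a classical subsolution of the heat equation by discarding the nonnegative gradient term and using the coercivity bound, check the exponential growth hypothesis, and then invoke Corollary~\ref{cor:maxPrincipleForHeatEquation}. The only cosmetic difference is that you introduce $v=1+|u|^2$ first and then apply the integrating factor to get $w=e^{-2ct}v$, whereas the paper defines the product in one step.
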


\begin{proof}[Proof of Theorem~\ref{thm:reactionDiffusionEquationWithCoerciveNonlinearity}]
	Throughout this proof let $v\colon [0,T]\times\R^d\to\R$ be the function which satisfies for every 
	$t\in [0,T]$, $x\in\R^d$ that 
	\begin{equation}\label{eq:defV}
	v(t,x) = e^{-2ct}(1+|u(t,x)|^2). 
	\end{equation}
	Note that \eqref{eq:assumptionExponentialGrowthThm} ensures that there exists $a\in (-\infty,0]$ such that 
	\begin{equation}\label{eq:expGrowthU}
	\sup_{(t,x)\in [0,T]\times\R^d} \left( e^{a\|x\|^2} |u(t,x)| \right) < \infty.
	\end{equation}
	Moreover, observe that \eqref{eq:growthOfNonlinearityMaxPrinciple} demonstrates that $c\ge 0.$ Combining this with \eqref{eq:defV} and \eqref{eq:expGrowthU} implies that
	\begin{equation}
	\begin{split}
	&\sup_{(t,x)\in [0,T]\times\R^d}  \left({e^{2a\|x\|^2}}{v(t,x)}\right)\\
	&= 
	\sup_{(t,x)\in [0,T]\times\R^d} \left({e^{2a\|x\|^2}} e^{-2ct} \left(1+|u(t,x)|^2\right)\right) 
	\\
	& 
	\leq
	\sup_{(t,x)\in [0,T]\times\R^d} \left({e^{2a\|x\|^2}}  \left(1+|u(t,x)|^2\right)\right) 
	\\
	& 
	\leq
	\left[\sup_{(t,x)\in [0,T]\times\R^d}\left( {e^{2a\|x\|^2}}\right)\right]+\left[\sup_{(t,x)\in [0,T]\times\R^d} \left({e^{2a\|x\|^2}}|u(t,x)|^2\right)\right] 
	\\
	& 
	=
	1 + \left[ \sup_{(t,x)\in [0,T]\times \R^d} \left( {e^{a\|x\|^2}}{|u(t,x)|} \right) \right]^2
	< \infty . 
	\end{split}
	\end{equation} 
	Hence, we obtain that
	\begin{equation}\label{eq:inequality_for_v}
	\inf_{\alpha\in \R} \sup_{(t,x)\in [0,T]\times\R^d} (e^{\alpha\|x\|^2}v(t,x)) < \infty.
	\end{equation}
	Next observe that \eqref{eq:defV}, the hypothesis that $u\in C([0,T]\times\R^d,\R),$ and the hypothesis that $u|_{(0,T]\times\R^d} \in C^{1,2}((0,T]\times\R^d,\R)$ ensure that
	\begin{equation}\label{eq:smoothnessV}
	v\in C([0,T]\times\R^d,\R)\qquad \text{and}\qquad v|_{(0,T]\times\R^d} \in C^{1,2}((0,T]\times\R^d,\R).
	\end{equation}
	Furthermore, note that \eqref{eq:defV} demonstrates that for every $t\in (0,T],$ $x\in\R^d$ it holds 
	that $v(0,x) = 1 + |u(0,x)|^2$ and
	\begin{equation}
	(\tfrac{\partial}{\partial t} v)(t,x) 
	= 
	-2c e^{-2ct}(1+|u(t,x)|^2) + 2 e^{-2ct} u(t,x)(\tfrac{\partial u}{\partial t})(t,x) . 
	\end{equation}
	This, \eqref{eq:pde_for_uMaxPrinciple}, and \eqref{eq:growthOfNonlinearityMaxPrinciple} imply that for every $t\in (0,T]$, $x\in\R^d$ it holds that 
	\begin{equation}
	\begin{split}
	&(\tfrac{\partial }{\partial t}v)(t,x) 
	= 
	-2c e^{-2ct}(1+|u(t,x)|^2) + 2e^{-2ct} u(t,x) \left( (\Delta_x u)(t,x) + f(t,x,u(t,x)) \right)\\
	& \leq 
	-2c e^{-2ct}(1+|u(t,x)|^2) + 2e^{-2ct} u(t,x) (\Delta_x u)(t,x) + 2c e^{-2ct} (1+|u(t,x)|^2) \\
	& = 
	2e^{-2ct} u(t,x) (\Delta_x u)(t,x).  
	\end{split}
	\end{equation}
	The fact that for every twice differentiable function 
	$w\colon\R^d\to\R$ and every $x=(x_1,\dots,x_d)\in\Rd$ it holds that
	\begin{equation}
	\begin{split}
	(\Delta(|w|^2))(x)=&\sum_{k=1}^d \left[\partialXX{k}\left(\vert w(x)\vert^2\right)\right]
	=\sum_{k=1}^d \left[\partialX{k}\left(2 w(x) (\partialX{k}w)(x)\right)\right]
	\\=& \sum_{k=1}^d \left[2\vert (\partialX{k}w)(x)\vert^2+2 w(x) (\partialXX{k}w)(x)\right]\\
	=& 2\left[\sum_{k=1}^d \vert (\partialX{k}w)(x)\vert^2\right]+2 w(x) \left[\sum_{k=1}^d (\partialXX{k}w)(x)\right]\\
	=& 2 w(x) (\Delta w)(x)+2\left[\sum_{k=1}^d \vert (\partialX{k}w)(x)\vert^2\right]
	\end{split}
	\end{equation}
	%
	%
	therefore implies that for every $t\in (0,T]$, $x=(x_1,\dots,x_d)\in\Rd$ it holds that 
	\begin{equation}
	\begin{split}
	(\tfrac{\partial }{\partial t}v)(t,x)
	&\leq e^{-2ct} \big(2 u(t,x) (\Delta_x u)(t,x)\big)\\
	&= e^{-2ct}\left(\left(\Delta_x(|u|^2)\right)\!(t,x)-2\left[\sum_{k=1}^d \vert (\partialX{k}u)(t,x)\vert^2\right]\right)\\
	&=
	(\Delta_x v)(t,x) - 2 e^{-2ct}\left[\sum_{k=1}^d \vert (\partialX{k}u)(t,x)\vert^2\right]
	\leq 
	(\Delta_x v)(t,x). 
	\end{split}
	\end{equation}
	Combining this with \eqref{eq:inequality_for_v} and \eqref{eq:smoothnessV} enables us to apply Corollary~\ref{cor:maxPrincipleForHeatEquation} 
	to obtain that 
	\begin{equation}
	0 \leq \sup_{(t,x)\in [0,T]\times\R^d} v(t,x) \leq \sup_{x\in\R^d} v(0,x) = 1 + \sup_{x\in\R^d} |u(0,x)|^2 . 
	\end{equation}
	Therefore, we obtain that for every $t\in [0,T]$ it holds that  
	\begin{equation}
	\begin{split}
	\sup_{x\in\R^d} |u(t,x)| 
	&=  \left[\sup_{x\in\R^d}|u(t,x)|^2\right]^{\nicefrac12}
	\leq\left[\sup_{x\in\R^d} \left(1+|u(t,x)|^2\right)\right]^{\nicefrac12}\\
	&=e^{ct}\left[\sup_{x\in\R^d} \left(e^{-2ct}\left(1+|u(t,x)|^2\right)\right)\right]^{\nicefrac12}
	= e^{c t}\left[\sup_{x\in\R^d} v(t,x)\right]^{\nicefrac12}\\
	&\leq e^{c t}\left[\sup_{(s,x)\in [0,T]\times\R^d} v(s,x)\right]^{\nicefrac12}
	\leq e^{c t}\left[1+\sup_{x\in\R^d} |u(0,x)|^2\right]^{\nicefrac12} .
	\end{split}
	\end{equation}
	The proof of Theorem~\ref{thm:reactionDiffusionEquationWithCoerciveNonlinearity} is thus 
	completed. 
\end{proof}

\begin{cor}\label{cor:backwardReactionDiffusionEquationWithCoerciveNonlinearity} 
Let 
	$d\in\N$, 
	$T\in (0,\infty)$, 
	$c\in \R$, 
let 
	$\norm{\cdot}\colon\R^d\to [0,\infty)$ 
be a norm, 
let 
	$f\colon [0,T]\times\R^d\times\R \to \R$ 
be a function which satisfies for every 
	$t\in [0,T]$, 
	$x\in \R^d$,  
	$y\in \R$
that 
	$
	yf(t,x,y) \leq c(1+y^2)
	$, 
and let 
	$u\in C([0,T]\times\R^d,\R)$ 
satisfy for every 
	$t\in [0,T)$, 
	$x\in \R^d$ 	
that 
	$u|_{[0,T)\times\R^d}\in C^{1,2}([0,T)\times\R^d,\R)$, 
	$
	\inf_{a\in\R} 
	\sup_{(s,y)\in [0,T]\times\R^d} 
	( 
	e^{a\norm{y}^2} |u(s,y)| 
	)
	< \infty
	$, 
and 
	\begin{equation} 
	\label{cor:backwardReactionDiffusionEquationWithCoerciveNonlinearity_pde_for_u} 
	(\tfrac{\partial }{\partial t}u)(t,x) 
	+ 
	\tfrac12 (\Delta_x u)(t,x) 
	+ 
	f(t,x,u(t,x)) 
	= 
	0. 
	\end{equation} 
Then it holds for every 
	$t\in [0,T]$ 
that 
	\begin{equation} 
	\sup_{x\in\R^d} |u(t,x)| 
	\leq 
	\left[ \sup_{x\in\R^d} (1+|u(t,x)|^2)\right]^{\nicefrac12}
	\leq 
	e^{c(T-t)} \left[ 1 + \sup_{x\in\R^d} |u(T,x)|^2\right]^{\nicefrac12}. 
	\end{equation}
\end{cor}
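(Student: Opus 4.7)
The strategy is to reduce the backward PDE with Laplacian coefficient $\tfrac12$ to the forward PDE with unit Laplacian coefficient covered by Theorem~\ref{thm:reactionDiffusionEquationWithCoerciveNonlinearity}, via a simultaneous time-reversal and space-rescaling. The only genuine work is verifying that the hypotheses of that theorem carry over, and then translating the resulting estimate back to $u$.

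\textbf{Step 1: change of variables.} I would define $\tilde u\colon [0,T]\times\R^d\to\R$ and $\tilde f\colon [0,T]\times\R^d\times\R\to\R$ by
\begin{equation*}
\tilde u(s,x)=u(T-s,\tfrac{x}{\sqrt 2}),\qquad \tilde f(s,x,y)=f(T-s,\tfrac{x}{\sqrt 2},y).
\end{equation*}
A direct chain-rule computation (using $(\Delta_x \tilde u)(s,x)=\tfrac12(\Delta_x u)(T-s,x/\sqrt 2)$ and $(\partial_s\tilde u)(s,x)=-(\partial_t u)(T-s,x/\sqrt 2)$) combined with the backward PDE \eqref{cor:backwardReactionDiffusionEquationWithCoerciveNonlinearity_pde_for_u} yields
\begin{equation*}
(\tfrac{\partial}{\partial s}\tilde u)(s,x)=(\Delta_x \tilde u)(s,x)+\tilde f(s,x,\tilde u(s,x))\qquad \text{for all }s\in(0,T],\,x\in\R^d.
\end{equation*}

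\textbf{Step 2: hypothesis verification.} The coercivity assumption is preserved under this transformation without any change of constant: for all $s,x,y$ one has $y\tilde f(s,x,y)=yf(T-s,x/\sqrt 2,y)\leq c(1+y^2)$. The regularity hypothesis $u|_{[0,T)\times\R^d}\in C^{1,2}$ translates to $\tilde u|_{(0,T]\times\R^d}\in C^{1,2}$, and $\tilde u\in C([0,T]\times\R^d,\R)$. The exponential growth bound is preserved since for any $a\in\R$ the substitution $y=x/\sqrt 2$ gives $\sup_{(s,x)}(e^{(a/2)\|x\|^2}|\tilde u(s,x)|)=\sup_{(t,y)}(e^{a\|y\|^2}|u(t,y)|)$, so the infimum over $a\in\R$ is still finite.

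\textbf{Step 3: apply the forward result and translate back.} Applying Theorem~\ref{thm:reactionDiffusionEquationWithCoerciveNonlinearity} to $\tilde u$ with nonlinearity $\tilde f$ and coercivity constant $c$ yields, for every $s\in[0,T]$,
\begin{equation*}
\sup_{x\in\R^d}|\tilde u(s,x)|\leq\Big[\sup_{x\in\R^d}(1+|\tilde u(s,x)|^2)\Big]^{\nicefrac12}\leq e^{cs}\Big[1+\sup_{x\in\R^d}|\tilde u(0,x)|^2\Big]^{\nicefrac12}.
\end{equation*}
For any $t\in[0,T]$, setting $s=T-t$ and using that the map $x\mapsto x/\sqrt 2$ is a bijection of $\R^d$ (so $\sup_{x}|\tilde u(s,x)|=\sup_{y}|u(t,y)|$ and $\sup_{x}|\tilde u(0,x)|=\sup_{y}|u(T,y)|$) yields the claimed inequality. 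The middle inequality in the statement is the trivial bound $|u(t,x)|\leq(1+|u(t,x)|^2)^{1/2}$, passed to the supremum in $x$.

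\textbf{Main obstacle.} There is no real difficulty; the only care required is the bookkeeping in Step 1 to ensure the factor $\tfrac12$ in front of $\Delta_x$ is absorbed by the spatial rescaling $x\mapsto x/\sqrt 2$ (and not by a time rescaling, which would inflate the coercivity constant and give the wrong exponent), and in Step 2 to confirm that the exponential growth condition transforms covariantly so that Theorem~\ref{thm:reactionDiffusionEquationWithCoerciveNonlinearity} applies verbatim.
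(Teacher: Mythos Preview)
Your proposal is correct and follows essentially the same approach as the paper: define the transformed function via the time reversal $t\mapsto T-t$ together with the spatial rescaling $x\mapsto x/\sqrt{2}$, verify that coercivity, regularity, and the exponential growth hypothesis carry over, apply Theorem~\ref{thm:reactionDiffusionEquationWithCoerciveNonlinearity}, and translate back. The only differences are notational.
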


\begin{proof}[Proof of \cref{cor:backwardReactionDiffusionEquationWithCoerciveNonlinearity}]
	Throughout this proof let 
	$U\colon [0,T]\times\R^d\to\R$ 
	and 
	$F\colon [0,T]\times\R^d\times\R \to\R$ 
	be the functions which satisfy for every 
	$t\in [0,T]$, 
	$x\in \R^d$, 
	$y\in\R$ 
	that 
	$U(t,x)=u(T-t,\frac{x}{\sqrt{2}})$ and $F(t,x,y)=f(T-t,\frac{x}{\sqrt{2}},y)$.
	Observe that the assumption that for every 
	$t\in [0,T]$, 
	$x\in \R^d$, 
	$y\in \R$ 
	it holds that
	$vf(t,x,y) \leq c(1+y^2)$ 	
	implies for every 
	$t\in [0,T]$, 
	$x\in \R^d$, 
	$y\in \R$ 
	that 
	\begin{equation} 
	\label{eq:backwardReactionDiffusionEquationWithCoerciveNonlinearity_continuityDifferentiabilityAndCoercivity}
	yF(t,x,y) = y f(T-t,\tfrac{x}{\sqrt{2}},y) \leq c ( 1 + y^2 ) .  
	\end{equation}
	Moreover, observe that the hypothesis that 
	$
	\inf_{a\in\R} 
	\sup_{(s,y)\in [0,T]\times\R^d} 
	( 
	e^{a\norm{y}^2} |u(s,y)| 
	)
	< \infty
	$ 
	ensures that there exists 
	$\alpha\in\R$
	which satisfies that 
	\begin{equation} 
	\sup_{(t,x)\in [0,T]\times\R^d} 
	\big( 
	e^{\alpha\norm{x}^2}|u(t,x)| 
	\big)
	< \infty. 
	\end{equation}
	This implies that 
	\begin{equation} 
	\sup_{(t,x)\in [0,T]\times\R^d} 
	\big(
	e^{\tfrac{\alpha}{2}\norm{x}^2}
	|U(t,x)|
	\big) 
	= 
	\sup_{(t,x)\in [0,T]\times\R^d} 
	\big(
	e^{\alpha\norm{\nicefrac{x}{\sqrt{2}}}^2} 
	|u(T-t,\tfrac{x}{\sqrt{2}})|
	\big) 
	< 
	\infty. 
	\end{equation}  	 
	Hence, we obtain that 
	\begin{equation}
	\label{cor:backwardReactionDiffusionEquationWithCoerciveNonlinearity_growth_of_U}
	\inf_{a\in\R}
	\sup_{(t,x)\in [0,T]\times\R^d} 
	\big( 
	e^{a\norm{x}^2} |U(t,x)| 
	\big)  
	< 
	\infty.
	\end{equation}   
	In addition, note that the hypothesis that 
	$u\in C([0,T]\times\R^d,\R)$, 
	the hypothesis that 
	$u|_{[0,T)\times\R^d}\in C^{1,2}([0,T)\times\R^d,\R)$, 
	the chain rule, and \eqref{cor:backwardReactionDiffusionEquationWithCoerciveNonlinearity_pde_for_u}  ensure that for every 
	$t\in (0,T]$, 
	$x\in \R^d$ 
	it holds that
	$U\in C([0,T]\times\R^d,\R)$, 
	that 
	$U|_{(0,T]\times\R^d}\in C^{1,2}((0,T]\times\R^d,\R)$, 
	and that  
	\begin{equation}\label{cor:backwardReactionDiffusionEquationWithCoerciveNonlinearity_pde_for_U}	 
	(\tfrac{\partial }{\partial t}U)(t,x) 
	= (\Delta_x U)(t,x) + F(t,x,U(t,x)). 
	\end{equation} 
	Combining this,
	\eqref{eq:backwardReactionDiffusionEquationWithCoerciveNonlinearity_continuityDifferentiabilityAndCoercivity}, and 
	\eqref{cor:backwardReactionDiffusionEquationWithCoerciveNonlinearity_growth_of_U} with \cref{thm:reactionDiffusionEquationWithCoerciveNonlinearity} 
	(with $f=F$, $u=U$ in the notation of \cref{thm:reactionDiffusionEquationWithCoerciveNonlinearity})
	demonstrates for every 
	$t\in [0,T]$ 
	that 
	\begin{equation}
	\begin{split}
	\sup_{x\in \R^d} |u(t,x)| 
	& = 
	\left[ 
	1 + \sup_{x\in \R^d} |u(t,x)|^2
	\right]^{\nicefrac12} 
	= 
	\left[ 
	1 + \sup_{x\in\R^d} |U(T-t,x)|^2 
	\right]^{\nicefrac12} 
	\\
	& \leq e^{c (T-t)}
	\left[1+\sup_{x\in\R^d} |U(0,x)|^2\right]^{\nicefrac{1}{2}}
	= 
	e^{c(T-t)} \left[ 1 + \sup_{x\in\R^d} |u(T,x)|^2\right]^{\nicefrac12}. 
	\end{split}
	\end{equation}
	This completes the proof of \cref{cor:backwardReactionDiffusionEquationWithCoerciveNonlinearity}. 
\end{proof}

\section{Truncated full-history recursive multilevel Picard (MLP) approximations}
\label{sec:algorithm}

In this section we present and analyze a (truncated) MLP approximation scheme for reaction-diffusion type PDEs with coercive nonlinearity (see \cref{setting} 
below for details). 
The error analysis relies on results in~\cite[Section 3]{Overcoming} (cf.~also \cref{thm:convergenceLocalLipschitz} below) in combination with a Feynman--Kac representation (cf.~\cref{feynman_kac}) and the a priori estimates in \cref{sec:maximum_principle} 
above. 

\begin{setting}[Setting and algorithm]
	\label{setting}
Let 
	$d \in \N$, 
	$T \in (0,\infty)$, 
	$\Theta = \cup_{n \in \N} \Z^n$, 
	$f\in C([0,T]\times\R^d\times\R,\R)$,
	$g\in C(\R^d,\R)$, 
let 
	$\fr \colon [0,T]\times\R^d\times\R\to\R$, $r\in (0,\infty)$, 
satisfy for every 
	$r \in (0,\infty)$, 
	$t \in [0,T]$, 
	$x \in \R^d$, 
	$u \in \R$ 
that 
	\begin{equation}\label{eq:truncation}
	\fr (t,x,u) = f(t, x, \min\{r,\max\{-r,u\}\}), 
	\end{equation} 
let 
	$(\Omega,\cF,\P)$ be a probability space, 
let 
	$\mathcal{R}^{\theta}\colon \Omega \to [0,1]$, $\theta\in\Theta$, 
be independent $\mathcal{U}_{[0,1]}$-distributed random variables, 
let 
	$W^{\theta}\colon [0,T]\times\Omega\to\R^d$, $\theta\in\Theta$, 
be independent standard Brownian motions, 
assume that 
	$(\mathcal{R}^{\theta})_{\theta\in\Theta}$  and $(W^{\theta})_{\theta\in\Theta}$ are independent, 
let 
	$R^{\theta}\colon [0,T]\times\Omega \to [0,T]$, $\theta\in\Theta$, 
satisfy for every 
	$\theta\in\Theta$,  
	$t\in [0,T]$
that 
	$R^{\theta}_t = t + (T-t) \mathcal{R}^{\theta}$, 
for every 
	$\theta\in\Theta$,
	$t\in [0,T]$, 
	$s\in [t,T]$, 
	$x\in \R^d$
let 
	$X^{\theta}_{t,s,x}\colon\Omega\to\R^d$
satisfy  
	$X^{\theta}_{t,s,x} = x + W^{\theta}_s - W^{\theta}_t$, 
and let 
	$U^{\theta}_{n,M,r}\colon [0,T]\times\R^d\times\Omega\to\R$, 
	$\theta\in\Theta$, 
	$n\in\N_0$,
	$M\in\N$,  
	$r\in (0,\infty)$, 
satisfy for every 
	$\theta\in\Theta$, 
	$n,M\in\N$, 
	$r\in (0,\infty)$, 
	$t\in [0,T]$, 
	$x\in \R^d$ 
that 
	$
	U^{\theta}_{0,M,r}(t,x) = 0
	$
and 
	\begin{equation}
	\begin{split}
	& U^{\theta}_{n,M,r}(t,x) 
	= 
	\frac{1}{M^n}\left[ 
	\sum_{m=1}^{M^n} 
	\left( 
	g(X^{(\theta,0,-m)}_{t,T,x}) 
	+ 
	(T-t) \, f \big(R^{(\theta,0,m)}_t, X^{(\theta,0,m)}_{t,R^{(\theta,0,m)}_t,x},0\big)
	\right)
	\right] 
	\\
	& \qquad
	+ 
	\sum_{k=1}^{n-1} \frac{(T-t)}{M^{n-k}} 
	\Bigg[ 
	\sum_{m=1}^{M^{n-k}} 
	\bigg(
	\fr\!\left( R^{(\theta,k,m)}_t, 
	X^{(\theta,k,m)}_{t,R^{(\theta,k,m)}_t,x},
	U^{(\theta,k,m)}_{k,M,r}\big( 		
	R^{(\theta,k,m)}_t, 
	X^{(\theta,k,m)}_{t,R^{(\theta,k,m)}_t,x}
	\big)\!
	\right)
	\\
	& \qquad\qquad\qquad\qquad\qquad
	-
	\fr\!\left( 
	R^{(\theta,k,m)}_t, 
	X^{(\theta,k,m)}_{t,R^{(\theta,k,m)}_t,x},
	U^{(\theta,-k,m)}_{k-1,M,r}\big( 		
	R^{(\theta,k,m)}_t, 
	X^{(\theta,k,m)}_{t,R^{(\theta,k,m)}_t,x}\big)\!
	\right)\!\!
	\bigg)\!
	\Bigg]\!.
	\end{split} 
	\end{equation}
\end{setting}

The next result, \cref{L2_estimate} below, is an adaptation of \cite[Theorem 3.5]{Overcoming} to \cref{setting}. 

\begin{lemma}[Convergence rate for stochastic fixed point equations]
	\label{L2_estimate}
Assume 
	\cref{setting}, 
let 
	$\rho\in (0,\infty)$,
let 
	$L\colon (0,\infty)\to [0,\infty)$ 
satisfy for every 
	$ r \in (0,\infty)$,
	$ t \in [0, T] $, 
	$ x \in \R^d $, 
	$ v,w \in [-r,r]$ 
that
	\begin{equation}
	\label{L2_estimate:eq0}
	|f(t, x, v) - f(t, x, w)|
	\leq
	L(r) | v - w |,
	\end{equation}
let 
	$u\in C([0,T]\times\R^d,\R)$ 
satisfy for every 
	$r\in [\rho,\infty)$, 
	$t\in [0,T]$, 
	$x\in \R^d$ 
that 
	\begin{equation}
	\begin{split} 
	& 
	\Exp{|g(X^{0}_{0,T,x})|} 
	+ 
	\int_0^T 
	\left( 
	\Exp{|u(s,X^0_{0,s,x})|^2}
	\right)^{\!\nicefrac12}\,ds
	\\
	& \quad
	+ \int_t^T 
	\Exp{\left|\fr\big(s,X^{0}_{t,s,x},u(s,X^{0}_{t,s,x})\big)\right|
		+ 
		\left| 
		f(s,X^{0}_{t,s,x},0)
		\right| 
	}\!\,ds < \infty 
	\end{split} 
	\end{equation}
	\begin{equation} 
	\text{and} \qquad 
	u(t,x) 
	= 
	\Exp{g(X^{0}_{t,T,x}) 
		+ 
		\int_t^T \fr\big( s, X^0_{t,s,x}, u(s,X^0_{t,s,x}) \big)\,ds}\!. 
	\end{equation} 
Then it holds for every 
	$n \in \N_0$, 
	$M \in \N$, 
	$r\in [\rho,\infty)$,
	$x \in \R^d$ 
that
	\begin{equation}
	\label{L2_estimate:claim}
	\begin{split}
	&
	\left(
	\Exp{|  U^0_{n,M,r} (0, x) - u(0, x)  |^2 }
	\right)^{\!\nicefrac{1}{2}}
	\\
	& \leq
	e^{L(r)T}
	\left[ 
	\left( \Exp{|g(X^{0}_{0,T,x})|^2} \right)^{\!\nicefrac12}
	+ 
	\sqrt{T}
	\left|
	\int_0^T \Exp{|f(s,X^{0}_{0,s,x},0)|^2}\!\,ds\right|^{\nicefrac12}
	\right] 
	\left[ 
	\frac{ e^{\nicefrac{M}{2}} ( 1 + 2 L(r) T )^n }{M^{\nicefrac{n}{2}}}\right]\!.
	\end{split} 
	\end{equation}
\end{lemma}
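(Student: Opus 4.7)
The plan is to reduce the claim to \cite[Theorem 3.5]{Overcoming} by exploiting the fact that the truncated nonlinearity $\fr$ is globally Lipschitz in its last argument with constant $L(r)$, even though $f$ itself is only locally Lipschitz. Since by hypothesis $u$ satisfies the stochastic fixed point equation with $\fr$ in place of $f$ for every $r \in [\rho, \infty)$, and since the MLP iterates $U^\theta_{n,M,r}$ in \cref{setting} use precisely $\fr$ as their nonlinearity, the entire machinery of \cite[Section 3]{Overcoming} becomes directly applicable once this global Lipschitz property is in place.

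First I would verify the global Lipschitz continuity of $\fr$. For every $r \in (0, \infty)$ the truncation $v \mapsto \min\{r, \max\{-r, v\}\}$ maps $\R$ into $[-r, r]$ and is $1$-Lipschitz. Hence, combining this with \eqref{L2_estimate:eq0} yields for every $t \in [0, T]$, $x \in \R^d$, $v, w \in \R$ that
\begin{equation*}
|\fr(t, x, v) - \fr(t, x, w)| \leq L(r)\, |\min\{r,\max\{-r,v\}\} - \min\{r,\max\{-r,w\}\}| \leq L(r)\, |v - w|,
\end{equation*}
so that $\fr(t,x,\cdot)$ is globally Lipschitz on all of $\R$ with constant $L(r)$, uniformly in $(t,x)$.

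With this in hand, I would apply \cite[Theorem 3.5]{Overcoming} with $\fr$ as the nonlinearity and $L(r)$ as its Lipschitz constant. The hypotheses of that theorem ask for the stochastic fixed point equation for $u$ (given here for every $r \in [\rho, \infty)$), for suitable integrability of $g(X^0_{0,T,x})$, $\fr(\cdot, X^0_{0,\cdot,x}, u(\cdot, X^0_{0,\cdot,x}))$, and $f(\cdot, X^0_{0,\cdot,x}, 0)$ along the relevant time interval (supplied by the finiteness assumption in the statement), and for the concrete recursive structure of $U^\theta_{n,M,r}$ (which coincides with that of \cref{setting}). The conclusion of \cite[Theorem 3.5]{Overcoming} then yields exactly \eqref{L2_estimate:claim}, with the observation that if $(\E[|g(X^0_{0,T,x})|^2])^{\nicefrac12}$ or $(\int_0^T \E[|f(s,X^0_{0,s,x},0)|^2]\,ds)^{\nicefrac12}$ is infinite, the bound is trivial. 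The main obstacle in this plan is bookkeeping rather than a new idea: one must carefully match the conventions of \cref{setting} (random time change $R^\theta_t = t + (T-t)\mathcal{R}^\theta$, Brownian increment $X^\theta_{t,s,x} = x + W^\theta_s - W^\theta_t$, indexing of the sample labels via $\Theta = \cup_n \Z^n$) with those used in \cite[Theorem 3.5]{Overcoming}, and verify that the Monte Carlo telescoping and induction-on-$n$ argument producing the factor $e^{\nicefrac{M}{2}}(1+2L(r)T)^n / M^{\nicefrac{n}{2}}$ goes through verbatim with $L(r)$ in place of the global Lipschitz constant of the original nonlinearity.
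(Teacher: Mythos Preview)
Your proposal is correct and follows essentially the same approach as the paper: establish that the truncation $v\mapsto\min\{r,\max\{-r,v\}\}$ is $1$-Lipschitz so that $\fr$ is globally Lipschitz with constant $L(r)$, then invoke \cite[Theorem~3.5]{Overcoming} with $\fr$ in the role of the nonlinearity. The only point you gloss over is that the paper treats the base case $n=0$ separately via \cite[Lemma~3.4]{Overcoming} (since \cite[Theorem~3.5]{Overcoming} is stated for $n\in\N$), but this is a routine bookkeeping matter rather than a gap in the argument.
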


\begin{proof}[Proof of \cref{L2_estimate}]
Throughout this proof let 
	$P_r\colon \R\to \R$, $r\in (0,\infty)$, 
be the functions which satisfy for every 
	$v \in \R$ 
that 
	$P_r(v)=\min \{r,\max\{-r,v\}\}$ 
and assume w.l.o.g.~that there exists a standard Brownian motion 
	$\mathbf{W}\colon [0,T]\times\Omega\to\R^d$
which satisfies that 
	$(\mathcal{R}^{\theta})_{\theta\in\Theta}$, 
	$(W^{\theta})_{\theta\in\Theta}$, 
and 
	$\mathbf{W}$ 
are independent. Observe that for every 
	$r\in (0,\infty)$
it holds that $P_r\colon\R\to\R$ is the projection onto the 
closed convex interval $[-r,r]$. Therefore, we obtain for every 
	$r \in (0,\infty)$,
	$v,w \in \R$ 
that 
	\begin{equation}
	\vert P_r(v)-P_r(w)\vert\le \vert v-w\vert
	\end{equation}
(cf., e.g., Br\'ezis~\cite[Proposition 5.3]{Brezis}).
This, \eqref{eq:truncation}, and \eqref{L2_estimate:eq0} imply for every 
	$r\in (0,\infty)$, 
	$t \in [0, T]$, 
	$x \in \R^d$, 
	$v,w \in \R$ 
that
	\begin{equation} \label{L2_estimate:globally_lipschitz_truncations}
	\begin{split}
	|\fr(t, x, v) - \fr(t, x, w)|
	&
	=|f(t, x, P_r(v)) - f(t, x, P_r(w))|
	\\&
	\le L(r) | P_r(v) - P_r(w) |
	\le L(r) | v - w |.
	\end{split}
	\end{equation} 
This and \cite[Theorem 3.5]{Overcoming} (with 
	$d=d$, 
	$T=T$,
	$L=L(r)$,
	$\xi=x$,
	$F=(C([0,T]\times\R^d,\R) \ni v\mapsto ([0,T]\times\R^d \ni (t,x) \mapsto \fr(t,x,v(t,x))\in\R) \in C([0,T]\times\R^d,\R) )$, 
	$(\Omega,\cF,\P)=(\Omega,\cF,\P)$,
	$g=g$, 
	$u=u$,
	$\Theta = \Theta$, 
	$W^{\theta}=W^{\theta}$,
	$\mathfrak{r}^{\theta}=\mathcal{R}^{\theta}$,
	$\mathcal{R}^{\theta}=R^{\theta}$,
	$U^{\theta}_{n,M} = U^{\theta}_{n,M,r}$
for
	$\theta\in\Theta$, 
	$n\in\N_0$, 
	$M\in\N$ 	
in the notation of \cite[Theorem 3.5]{Overcoming}) ensure for every 
	$n,M\in\N$, 
	$r\in [\rho, \infty)$, 
	$x\in \R^d$
that 
	\begin{equation} \label{L2_estimate:case_N_bigger_than_zero}
	\begin{split}
	&
	\left( 
	\Exp{|U^0_{n,M,r}(0,x) - u(0,x)|^2} 
	\right)^{\!\nicefrac12}
	\\
	& \leq
	e^{L(r)T}
	\left[ 
	\left( \Exp{|g(X^{0}_{0,T,x})|^2} \right)^{\!\nicefrac12}
	+ 
	\sqrt{T}
	\left|
	\int_0^T \Exp{|f(s,X^{0}_{0,s,x},0)|^2}\!\,ds\right|^{\nicefrac12}
	\right] 
	\left[ \frac{ e^{\nicefrac{M}{2}} ( 1 + 2 L(r) T )^n }{M^{\nicefrac{n}{2}}}\right]\!.
	\end{split}
	\end{equation} 
Moreover, note that \eqref{L2_estimate:globally_lipschitz_truncations} and \cite[Lemma 3.4]{Overcoming}
	(with 
	$d=d$, 
	$T=T$,
	$L=L(r)$,
	$\xi=x$,
	$F=(C([0,T]\times\R^d,\R) \ni v\mapsto ([0,T]\times\R^d \ni (t,x) \mapsto \fr(t,x,v(t,x))\in\R) \in C([0,T]\times\R^d,\R) )$, 
	$(\Omega,\cF,\P)=(\Omega,\cF,\P)$,
	$g=g$, 
	$u=u$,
	$\Theta = \Theta$, 
	$W^{\theta}=W^{\theta}$,
	$\mathfrak{r}^{\theta}=\mathcal{R}^{\theta}$,
	$\mathcal{R}^{\theta}=R^{\theta}$,
	$U^{\theta}_{n,M} = U^{\theta}_{n,M,r}$ 
for 
	$\theta\in\Theta$, 
	$n\in\N_0$, 
	$M\in\N$ 	
in the notation of \cite[Lemma 3.4]{Overcoming}) yield that for every 
	$M\in\N$, 
	$r\in [\rho,\infty)$, 
	$x\in\R^d$  
it holds that 
	\begin{equation} 
	\begin{split}
	& 
	\left( \Exp{\left| U^{0}_{0,M,r}(0,x) - u(0,x) \right|^2} \right)^{\!\nicefrac12} 
	\\
	& \leq 
	e^{L(r)T} \left[ 
	\left( \Exp{|g(X^{0}_{0,T,x})|^2} \right)^{\!\nicefrac12} 
	+ 
	\sqrt{T} \left| \int_0^T \Exp{|f(s,X^{0}_{0,s,x},0)|^2}\!\,ds \right|^{\!\nicefrac12}
	\right]\!.
	\end{split}
	\end{equation} 
Combining this with \eqref{L2_estimate:case_N_bigger_than_zero} establishes 
\eqref{L2_estimate:claim}. 
The proof of \cref{L2_estimate} is thus completed. 
\end{proof}



\begin{lemma}[Feyman--Kac formula]\label{feynman_kac}
Let 
	$d\in\N$, 
	$T\in (0,\infty)$,  
	$u,h\in C([0,T]\times\R^d,\R)$, 
let 
	$(\Omega,\cF,\P)$ be a probability space, 
let 
	$W\colon [0,T]\times\Omega\to\R^d$ 
be a standard Brownian motion, 
for every 
	$t\in [0,T]$, 
	$s\in [t,T]$, 
	$x\in \R^d$ 
let 
	$X_{t,s,x} \colon \Omega \to \R^d$ 
satisfy
	$X_{t,s,x}=x+W_s-W_t$, 
and assume for every 
	$t\in [0,T)$, 
	$x\in \R^d$
that
	$
	\sup_{s\in [0,T],y\in\R^d} |u(s,y)| < \infty
	$,  
	$
	\EXP{\int_t^T |h(s,X_{t,s,x})|\,ds} < \infty
	$, 
	$
	u|_{[0,T)\times\R^d}\in C^{1,2}( [0,T)\times\R^d,\R)
	$, 
and 
	\begin{equation}
	(\tfrac{\partial }{\partial t}u)(t,x) 
	+ 
	\tfrac12 (\Delta_x u)(t,x) 
	+ 
	h(t,x) 
	= 
	0. 
	\end{equation}
Then it holds for every 
	$t\in [0,T]$, 
	$x\in \R^d$ 
that 
	\begin{equation}\label{feynman_kac:claim}
	u(t,x) 
	= 
	\Exp{u(T,X_{t,T,x}) 
		+ 
		\int_t^T 
		h(s,X_{t,s,x})\,ds}\!.
	\end{equation}
\end{lemma}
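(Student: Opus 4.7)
The plan is to apply Itô's formula to the process $s\mapsto u(s,X_{t,s,x})$ on a suitable interval, use the PDE to convert the drift term into $-h$, and then pass to expectations and take limits. The main obstacle is that nothing is assumed about the growth of the spatial derivatives of $u$, so the stochastic integral that arises is only a local martingale. Hence localization is essential.

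Fix $t\in [0,T)$ and $x\in\R^d$ (the case $t=T$ is immediate from $X_{T,T,x}=x$). First I would fix an auxiliary time $T'\in(t,T)$ and a localization sequence
\begin{equation}
\tau_N = \inf\{ s\in [t,T'] : \|X_{t,s,x} - x\|\geq N\} \wedge T', \qquad N\in\N.
\end{equation}
Since $u|_{[0,T)\times\R^d}\in C^{1,2}$, Itô's formula applied on $[t,\tau_N]$ together with the PDE $(\tfrac{\partial}{\partial t}u)+\tfrac12(\Delta_x u)=-h$ gives
\begin{equation}
u(\tau_N,X_{t,\tau_N,x}) - u(t,x)
= -\int_t^{\tau_N} h(s,X_{t,s,x})\,ds
+ \int_t^{\tau_N} (\nabla_x u)(s,X_{t,s,x})\cdot dW_s.
\end{equation}
On $\{(s,y):s\in[t,T'],\|y-x\|\leq N\}$ the gradient $\nabla_x u$ is bounded by continuity, so the stopped stochastic integral is a genuine martingale and its expectation vanishes. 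Taking expectations and using the hypothesis $\E[\int_t^T|h(s,X_{t,s,x})|\,ds]<\infty$ together with dominated convergence (on the $h$-side) and the boundedness of $u$ (on the $u$-side) to let $N\to\infty$ yields
\begin{equation}
u(t,x) = \Exp{ u(T',X_{t,T',x}) + \int_t^{T'} h(s,X_{t,s,x})\,ds }.
\end{equation}

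Finally I would let $T'\uparrow T$. For the integral term, $|h(s,X_{t,s,x})|\mathbbm{1}_{[t,T']}(s)$ is dominated by the integrable function $|h(s,X_{t,s,x})|\mathbbm{1}_{[t,T]}(s)$, so dominated convergence applies. For the boundary term, continuity of $u$ on $[0,T]\times\R^d$ and continuity of Brownian paths give $u(T',X_{t,T',x})\to u(T,X_{t,T,x})$ almost surely, and the uniform bound $\sup_{s\in[0,T],y\in\R^d}|u(s,y)|<\infty$ enables another application of dominated convergence. Combining these limits yields \eqref{feynman_kac:claim}. The step that requires the most care is the localization argument, since no a priori bound on $\nabla_x u$ or $\Delta_x u$ is available and only boundedness of $u$ itself and integrability of $h$ along the Brownian path may be invoked.
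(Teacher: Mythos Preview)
Your proof is correct and follows essentially the same localization-plus-It\^o strategy as the paper. The only cosmetic difference is that the paper merges the spatial localization and the approach to the terminal time into a single stopping time $\tau^{t,x}_r=\inf(\{s\in[t,T]:\|X_{t,s,x}-x\|>r\}\cup\{T-\tfrac1r\})$ indexed by one parameter $r$, whereas you separate these into two consecutive limits ($N\to\infty$ followed by $T'\uparrow T$).
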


\begin{proof}[Proof of \cref{feynman_kac}]
Throughout this proof let 
	$\langle\cdot,\cdot\rangle\colon\R^d\times\R^d\to\R$ 
be the Euclidean scalar product on $\R^d$, 
let 
	$\Norm{\cdot}\colon \R^d \to [0,\infty)$ 
be the Euclidean norm on $\R^d$,  
and for every
	$r\in (0,\infty)$, 
	$t\in [0,T]$, 
	$x\in\R^d$
with 
	$t<T-\nicefrac{1}{r}$ 
let the function 
	$\tau^{t,x}_r \colon \Omega \to [t,T-\nicefrac{1}{r}]$ 
satisfy that  
	$\tau^{t,x}_r = \inf(\{s\in [t,T]\colon \Norm{X_{t,s,x}-x}>r\}\cup\{T-\nicefrac{1}{r}\})$. 
Observe that It\^o's formula and the hypothesis that $u|_{[0,T)\times\R^d}\in C^{1,2}([0,T)\times\R^d,\R)$ ensure that for every 
	$r\in (0,\infty)$,
	$t\in [0,T]$,
	$x\in\R^d$
with 
	$t<T-\nicefrac{1}{r}$ 
it holds $\P$-a.s.~that 
	\begin{equation} 
	\begin{split}
	u(\tau^{t,x}_r,X_{t,\tau^{t,x}_r,x}) 
	= 
	u(t,x) 
	+ 
	\int_t^{\tau^{t,x}_r} 
	\langle (\nabla_x u)(s,X_{t,s,x}),\,dW_s\rangle
	- 
	\int_t^{\tau^{t,x}_r} 
	h(s,X_{t,s,x})\,ds. 
	\end{split}
	\end{equation} 
This implies for every 
	$r\in (0,\infty)$,
	$t\in [0,T]$,
	$x\in\R^d$
with 
	$t<T-\nicefrac{1}{r}$
that 
	\begin{equation} \label{feynman_kac:till_stopping_time}
	u(t,x)
	= 
	\Exp{u(\tau^{t,x}_r,
		X_{t,\tau^{t,x}_r,x}) 
		+ 
		\int_t^{\tau^{t,x}_r} h( s, X_{t,s,x} )\,ds}\!.
	\end{equation} 
Combining the fact that for every 
	$t\in [0,T]$, 
	$x\in\R^d$ 
it holds $\P$-a.s.~that 
	$\limsup_{r\to\infty} |\tau^{t,x}_r-T| = 0$ 
and the hypothesis that 
	$u\colon [0,T]\times\R^d\to\R$ 
is a bounded continuous function with Lebesgue's dominated convergence theorem hence implies that for every 
	$t\in [0,T]$, 
	$x\in\R^d$
it holds that
	\begin{equation} \label{feynman_kac:terminal_data_convergence}
	\limsup_{r\to\infty}  
	\Exp{\left|u(\tau^{t,x}_r,X_{t,\tau^{t,x}_r,x}) 
		- 
		u(T,X_{t,T,x})\right|}
	= 0. 
	\end{equation} 
In addition, note that the fact that for every 
	$t\in [0,T]$, 
	$x\in \R^d$ 
it holds $\P$-a.s.~that 
	$\limsup_{r\to\infty} |\tau^{t,x,\mathfrak{t}}_r-\mathfrak{t}| = 0$,  
the hypothesis that 
	$h\colon [0,T]\times\R^d\to\R$ 
is a continuous function,
the hypothesis that for every 
	$t\in [0,T]$, 
	$x\in \R^d$ 
it holds that 
	$\int_t^T \EXP{|h(s,X_{t,s,x})|}\,ds < \infty$, 
and Lebesgue's dominated convergence theorem ensure for every 
	$t\in [0,T]$, 
	$x\in \R^d$ 
that 
	\begin{equation}
	\limsup_{r\to\infty} 
	\left| 
	\Exp{\int_t^{\tau^{t,x}_r} 
		h( s, X_{t,s,x})\,ds} 
	-\Exp{\int_t^{T} h( s, X_{t,s,x})\,ds}
	\right| 
	= 0.
	\end{equation}
This, \eqref{feynman_kac:till_stopping_time}, and \eqref{feynman_kac:terminal_data_convergence} imply for every 
	$t\in [0,T)$, 
	$x\in \R^d$ 
that 
	\begin{equation}
	\begin{split}
	u(t,x) 
	& = 
	\lim_{r \to \infty} 
	\left(
	\Exp{u(\tau^{t,x}_r,X_{t,\tau^{t,x}_r,x}) 
		+ 
		\int_t^{\tau^{t,x}_r} h(s,X_{t,s,x})\,ds}
	\right) 
	\\
	& =   
	\Exp{u(T,X_{t,T,x}) 
		+ 
		\int_t^T h(s,X_{t,s,x})\,ds}\!. 
	\end{split}
	\end{equation} 
This establishes \eqref{feynman_kac:claim}. The proof of \cref{feynman_kac} is thus completed.
\end{proof}


\begin{prop}[Convergence rate for Allen--Cahn PDEs]
	\label{thm:convergenceLocalLipschitz} 
Assume 
	Setting~\ref{setting}, 
let
	$\rho \in (0,\infty)$,
	$c\in [0,\infty)$, 
let 
	$\norm{\cdot}\colon\R^d \to [0,\infty)$ 
be a norm, 
let
	$L\colon (0,\infty)\to[0,\infty)$ 
satisfy for every 
	$r \in (0,\infty)$,
	$t \in [0, T]$, 
	$x \in \R^d$, 
	$v,w \in [-r,r]$ 
that
	\begin{equation}
	\label{convergenceLocalLipschitz:Lipschitz_ass}
	|f(t, x, v) - f(t, x, w)|
	\leq
	L(r) | v - w |,
	\end{equation}	
let 
	$u\in C([0,T]\times\R^d,\R)$ 
satisfy that 
	$	
	\inf_{a\in \R} 
	[ 
	\sup_{t\in [0,T]}\sup_{x\in\R^d} (e^{a\norm{x}^2}|u(t,x)|) 
	] 
	< \infty
	$ 
and 
	$u|_{[0,T)\times\R^d} \in C^{1,2}([0,T)\times\R^d,\R)$, 
and assume for every 
	$t\in [0,T)$, 
	$x\in\R^d$, 
	$v\in\R$
that
	$\rho\geq e^{cT} (1+\vert g(x)\vert^2)^{\nicefrac{1}{2}}$, 
	$v f(t,x,v) \leq c ( 1 + v^2 )$, 	
	$\int_t^T \EXP{|f(s,X^0_{t,s,x},0)|}\,ds<\infty$,
	$u(T,x)=g(x)$,  
and 
	\begin{equation}\label{eq:pde_for_u}
	(\tfrac{\partial }{\partial t}u)(t,x) 
	+ 
	\tfrac12 (\Delta_x u)(t,x) 
	+
	f(t,x,u(t,x))
	= 
	0. 
	\end{equation}
Then it holds for every 
	$n \in \N_0$, 
	$M \in \N$, 
	$r\in [\rho,\infty)$, 
	$x\in\R^d$
that 
	\begin{equation} \label{convergenceLocalLipschitz:claim1} 
	\begin{split}
	&
	\left(
	\Exp{|  U^0_{n,M,r} (0, x) - u(0, x)  |^2 }
	\right)^{\!\nicefrac{1}{2}}
	\\
	&
	\leq
	e^{L(r)T}
	\left[ 
	\left( \Exp{|g(X^{0}_{0,T,x})|^2} \right)^{\!\nicefrac12}
	+ 
	\sqrt{T}
	\left|
	\int_0^T \Exp{|f(s,X^{0}_{0,s,x},0)|^2}\!\,ds\right|^{\nicefrac12}
	\right]
	\left[\frac{ e^{\nicefrac{M}{2}} ( 1 + 2 L(r) T )^n }{M^{\nicefrac{n}{2}}}\right]
	\!. 
	\end{split} 
	\end{equation}
\end{prop}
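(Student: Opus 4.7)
\textbf{Proof plan for \cref{thm:convergenceLocalLipschitz}.} The overall strategy is to reduce the locally Lipschitz problem to the globally Lipschitz setting covered by \cref{L2_estimate}, by first showing that the exact solution $u$ stays inside the truncation window $[-\rho,\rho]$, so that $\mathbf{f}_r$ applied to $u$ agrees with $f$ applied to $u$, and then producing the required stochastic fixed point equation via a Feynman--Kac argument.

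\emph{Step 1 (a priori sup-bound on $u$).} I would first apply \cref{cor:backwardReactionDiffusionEquationWithCoerciveNonlinearity} to the PDE \eqref{eq:pde_for_u}, using the coercivity hypothesis $vf(t,x,v)\leq c(1+v^2)$, the terminal data $u(T,\cdot)=g$, the regularity $u|_{[0,T)\times\R^d}\in C^{1,2}$, and the exponential growth bound on $|u|$. This yields for every $t\in [0,T]$, $x\in\R^d$ that $|u(t,x)|\leq e^{c(T-t)}(1+\sup_{y\in\R^d}|g(y)|^2)^{\nicefrac12}\leq e^{cT}(1+\sup_{y\in\R^d}|g(y)|^2)^{\nicefrac12}\leq \rho$, where the last inequality uses the standing hypothesis $\rho\geq e^{cT}(1+|g(x)|^2)^{\nicefrac12}$ for every $x\in\R^d$. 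In particular, for every $r\in[\rho,\infty)$, $s\in[0,T]$, $y\in\R^d$ the projection in \eqref{eq:truncation} is inactive on $u(s,y)$, and consequently $\fr(s,y,u(s,y))=f(s,y,u(s,y))$.

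\emph{Step 2 (Feynman--Kac representation).} Combining the sup-bound $\sup|u|\leq\rho$ with the local Lipschitz hypothesis \eqref{convergenceLocalLipschitz:Lipschitz_ass} gives $|f(s,y,u(s,y))|\leq|f(s,y,0)|+L(\rho)\rho$, so that $\int_t^T \Exp{|f(s,X^{0}_{t,s,x},u(s,X^0_{t,s,x}))|}\,ds<\infty$ by the integrability assumption on $|f(\cdot,\cdot,0)|$. This, together with the boundedness of $u$ and its $C^{1,2}$ regularity on $[0,T)\times\R^d$, verifies the hypotheses of \cref{feynman_kac} applied with $h(s,y)=f(s,y,u(s,y))$ and the Brownian motion $W^0$. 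The lemma therefore yields, for every $t\in[0,T]$, $x\in\R^d$,
\begin{equation}
u(t,x)=\Exp{g(X^{0}_{t,T,x})+\int_t^T f\big(s,X^{0}_{t,s,x},u(s,X^{0}_{t,s,x})\big)\,ds}.
\end{equation}
By Step 1 we may replace $f$ by $\fr$ inside this identity for every $r\in[\rho,\infty)$, thereby recovering precisely the stochastic fixed point equation required in \cref{L2_estimate}.

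\emph{Step 3 (invoke \cref{L2_estimate}).} The remaining integrability conditions of \cref{L2_estimate} follow from $\sup|u|\leq\rho$ (controlling $\EXP{|g(X^0_{0,T,x})|}$, the $L^2$-moment integral in $s$, and the $\fr$-integral via $|\fr|\leq|f(s,y,0)|+L(r)r$), and from the standing hypothesis on $\int_0^T \EXP{|f(s,X^0_{0,s,x},0)|}\,ds$. Applying \cref{L2_estimate} with the same $L$ and $\rho$ then produces exactly the bound \eqref{convergenceLocalLipschitz:claim1}, completing the proof.

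\emph{Main obstacle.} The essential non-routine step is Step 1: without the a priori bound placing $u$ inside the truncation window, the substitution of $\fr$ for $f$ in the Feynman--Kac formula would be false, and the locally Lipschitz nonlinearity could not be tamed into the globally Lipschitz regime of \cref{L2_estimate}. Everything else is a matter of checking moment bounds and invoking existing machinery.
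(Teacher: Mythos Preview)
Your proposal is correct and follows essentially the same route as the paper's own proof: apply \cref{cor:backwardReactionDiffusionEquationWithCoerciveNonlinearity} to obtain the uniform bound $\sup|u|\leq\rho$ and hence $\fr(\cdot,\cdot,u)=f(\cdot,\cdot,u)$ for $r\geq\rho$, verify the integrability hypotheses and invoke \cref{feynman_kac} to obtain the stochastic fixed point equation, and then conclude via \cref{L2_estimate}. The only cosmetic difference is that the paper applies \cref{feynman_kac} with $h=\fr(\cdot,\cdot,u(\cdot,\cdot))$ rather than $h=f(\cdot,\cdot,u(\cdot,\cdot))$, but since these agree by Step~1 this is immaterial.
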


\begin{proof}[Proof of \cref{thm:convergenceLocalLipschitz}] 
First, observe that the hypothesis that $\sup_{x\in\R^d} |g(x)| < \infty$ 
implies that for every 
	$x \in \R^d$ 
it holds that 
	\begin{equation}\label{cor:backwardReactionDiffusionEquationWithCoerciveNonlinearity:g_integrability}
	\Exp{|g(X^0_{0,T,x})|}<\infty. 
	\end{equation}
Next note that \cref{cor:backwardReactionDiffusionEquationWithCoerciveNonlinearity} (with $d=d$, $T=T$, $c=c$, $\norm{\cdot}=\norm{\cdot}$, $f=f$, $u=u$ in the notation of \cref{cor:backwardReactionDiffusionEquationWithCoerciveNonlinearity}) ensures for every 
	$t\in [0,T]$
that 
	\begin{equation}\label{cor:backwardReactionDiffusionEquationWithCoerciveNonlinearity:u_bound}
	\sup_{x\in\R^d} 
	|u(t,x)| 
	\leq e^{c (T-t)} 
	\left[
	1+\sup_{x\in\R^d} |u(T,x)|^2\right]^{\nicefrac{1}{2}}
	\leq 
	e^{cT} \left[ 1 + \sup_{x\in\R^d} |g(x)|^2\right]^{\nicefrac12} 
	\leq  \rho.
	\end{equation}
Combining this with \eqref{eq:truncation} yields for every 
	$r\in [\rho,\infty)$, 
	$t\in [0,T]$, 
	$x\in \R^d$ 
that 
	\begin{equation}
	\fr(t,x,u(t,x))
	= 
	f(t,x,\min\{r,\max\{-r,u(t,x)\}\}) 
	=
	f(t,x,u(t,x)).
	\end{equation}
This and \eqref{eq:pde_for_u} demonstrate that for every 
	$r\in [\rho,\infty)$, 
	$t\in [0,T)$, 
	$x\in\R^d$ 
it holds that 
	\begin{equation}\label{eq:cutoffpde_for_u}
	(\tfrac{\partial }{\partial t}u)(t,x) 
	+ 
	\tfrac12(\Delta_x u)(t,x) 
	+ 
	\fr(t,x,u(t,x))
	= 
	0. 
	\end{equation}
Next observe that the fact that $\sup_{t\in [0,T], x\in\R^d} |u(t,x)|\leq \rho$ and \eqref{convergenceLocalLipschitz:Lipschitz_ass} ensure that for every 
	$r\in [\rho,\infty)$, 
	$t\in [0,T]$, 
	$x\in \R^d$ 
it holds that 
	$
	(\EXP{|u(s,X^0_{0,s,x})|^2})^{\nicefrac12}
	\leq \rho < \infty	
	$
and
	\begin{equation}\label{FeynmanKacFormulaFRintegrability}
	\begin{split}
	&		
	\Exp{
		\int_{t}^T
		\big| \fr  \big(s, X^0_{t,s,x},  u(s, X^0_{t,s,x}) \big) \big|
		\, ds
	}
	\\
	& 
	\leq 
	\Exp{
		\int_{t}^T
		\big| \fr  \big(s, X^0_{t,s,x}, 0 \big) \big|
		\, ds
	}
	+ 
	\int_t^{T} 
	L(r) \, \Exp {|u(s, X^0_{t,s,x})|}\!\,ds 
	\\
	& \leq 
	\Exp{\int_t^T |f(s,X^0_{t,s,x},0)|\,ds} 
	+ L(r) T \rho
	<\infty. 
	\end{split}
	\end{equation}
Hence, we obtain that \eqref{eq:cutoffpde_for_u} and \cref{feynman_kac} (with 
	$d=d$, 
	$T=T$, 
	$u=u$, 
	$h=([0,T]\times\R^d\ni (t,x)\mapsto \fr(t,x,u(t,x))\in\R)$, 
	$(\Omega,\cF,\P)=(\Omega,\cF,\P)$, 
	$W=W^{0}$, 
	$X_{t,s,x}=X^{0}_{t,s,x}$ 
for 
	$t\in[0,T]$, 
	$s\in[t,T]$, 
	$x\in\R^d$ 
in the notation of \cref{feynman_kac}) demonstrate that for every 
	$ r \in [ \rho , \infty ) $, 
	$ t \in [ 0 , T ]$, 
	$ x \in \R^d $ 
it holds that 
	\begin{equation} 
	u(t,x) 
	= 
	\Exp{g(X^0_{t,T,x}) 
		+ 
		\int_t^{T} 
		\fr\big(s,X^0_{t,s,x},u(s,X^0_{t,s,x})\big)\,ds}\!.
	\end{equation} 
	\cref{L2_estimate} (with 
	$\rho=\rho$, 
	$L=L$, 
	$u=u$ 
	in the notation of \cref{L2_estimate}), \eqref{cor:backwardReactionDiffusionEquationWithCoerciveNonlinearity:g_integrability}, and \eqref{FeynmanKacFormulaFRintegrability}
	hence establish~\eqref{convergenceLocalLipschitz:claim1}. 
	The proof of \cref{thm:convergenceLocalLipschitz} is thus completed. 
\end{proof}


\begin{prop} \label{thm:convergenceLocalLipschitz_forward_formulation}
Let
	$d\in\N$,
	$\rho,T\in (0,\infty)$,
	$c\in [0,\infty)$,
	$\Theta = \cup_{n\in\N} \Z^n$,
	$f\in C([0,T]\times\R^d\times\R,\R)$, 
	$(\fr)_{r\in (0,\infty)} \subseteq C([0,T]\times\R^d\times\R,\R)$,  
	$u \in C([0,T]\times\R^d,\R)$, 
let 
	$\norm{\cdot}\colon \R^d \to [0,\infty)$ 
be a norm on $\R^d$,
let 
	$L\colon (0,\infty)\to [0,\infty)$ 
be a function, 
let 
	$(\Omega,\mathcal{F},\P)$ 
be a probability space, 
let
	$\mathcal{R}^{\theta}\colon \Omega \to [0,1]$, $\theta\in\Theta$, 
be independent $\mathcal{U}_{[0,1]}$-distributed random variables, 
let 
	$W^{\theta}\colon [0,T]\times\Omega\to\R^d$, $\theta\in\Theta$,
be independent standard Brownian motions, 
assume that 
	$(\mathcal{R}^{\theta})_{\theta\in\Theta}$
and
	$(W^{\theta})_{\theta\in\Theta}$ 
are independent, 
let 
	$R^{\theta}\colon [0,T]\times\Omega \to [0,T]$, $\theta\in\Theta$, 
satisfy for every 
	$\theta\in\Theta$, 
	$t\in [0,T]$
that 
	$R^{\theta}_t = t \mathcal{R}^{\theta}$,
for every 
	$\theta\in \Theta$,	
	$s\in [0,T]$, 
	$t\in [s,T]$,
	$x\in \R^d$
let 
	$X^{\theta}_{s,t,x}\colon\Omega\to\R^d$
satisfy 
	$
	X^{\theta}_{s,t,x} 
	= 
	x + \sqrt{2}
	(W^{\theta}_t - W^{\theta}_s)
	$,
assume for every 
	$r\in (0,\infty)$,
	$t\in (0,T]$, 
	$x\in \R^d$, 
	$v\in \R$, 
	$w,\mathfrak{w}\in [-r,r]$ 
that 
	$vf(t,x,v) \leq c (1+v^2)$,
	$|f(t,x,w) - f(t,x,\mathfrak{w})|\leq L(r)|w-\mathfrak{w}|$,
	$\int_0^t \EXP{|f(s,X^{0}_{s,t,x},0)|}\,ds < \infty$,
	$\fr(t,x,v) = f(t,x,\min\{r,\max\{-r,v\}\})$,  
	$e^{cT}(1+ |u(0,x)|^2)^{\nicefrac12} \leq \rho$, 
	$u|_{(0,T]\times\R^d}\in C^{1,2}((0,T]\times\R^d,\R)$,
	$\inf_{a\in \R} [ 
	\sup_{s\in [0,T]}\sup_{y\in\R^d} (e^{a\norm{y}^2}|u(s,y)|) 
	] 
	< \infty
	$, 
and 
\begin{equation}
\label{convergenceLocalLipschitz_forward_formulation:assumptions_on_u}
(\tfrac{\partial}{\partial t}u)(t,x) 
= 
(\Delta_x u)(t,x) 
+ 
f(t,x,u(t,x)),  
\end{equation}
and let 
	$U^{\theta}_{n,M,r}\colon [0,T]\times\R^d\times\Omega\to\R$, 
	$\theta\in\Theta$,  
	$n\in\N_0$, 
	$M\in\N$,
	$r\in (0,\infty)$, 
satisfy for every 
	$\theta\in\Theta$,	
	$n,M\in\N$, 
	$r\in (0,\infty)$,
	$t\in [0,T]$, 
	$x\in \R^d$ 
that
	$U^{\theta}_{0,M,r}(t,x) = 0$
and
	\begin{equation} \label{convergenceLocalLipschitz_forward_formulation:transformed_mlp} 
	\begin{split}
	& U^{\theta}_{n,M,r}(t,x) 
	= 
	\frac{1}{M^n}\left[ 
	\sum_{m=1}^{M^n} 
	\left( 
	u(0,X^{(\theta,0,-m)}_{0,t,x}) 
	+ 
	t \, f\big(R^{(\theta,0,m)}_{t}, 
	X^{(\theta,0,m)}_{R^{(\theta,0,m)}_{t},t,x},0\big)
	\right)
	\right] 
	\\ 
	& \qquad 
	+ 
	\sum_{k=1}^{n-1} \frac{t}{M^{n-k}} 
	\Bigg[ 
	\sum_{m=1}^{M^{n-k}} 
	\bigg(
	\fr\Big( 
	R^{(\theta,k,m)}_{t}, 
	X^{(\theta,k,m)}_{R^{(\theta,k,m)}_{t},t,x},
	U^{(\theta,k,m)}_{k,M,r}\big(
	R^{(\theta,k,m)}_{t}, 
	X^{(\theta,k,m)}_{R^{(\theta,k,m)}_{t},t,x}
	\big)
	\Big)
	\\ 
	& \qquad\qquad\qquad\qquad\quad
	-
	\fr\Big(R^{(\theta,k,m)}_{t}, 
	X^{(\theta,k,m)}_{R^{(\theta,k,m)}_{t},t,x}, 
	U^{(\theta,-k,m)}_{k-1,M,r}\big(R^{(\theta,k,m)}_{t}, 
	X^{(\theta,k,m)}_{R^{(\theta,k,m)}_{t},t,x}\big)
	\Big)
	\bigg)
	\Bigg]\!.
	\end{split}
	\end{equation}
Then it holds for every 
	$n\in\N_0$, 
	$M\in\N$, 
	$r\in [\rho,\infty)$, 
	$x\in\R^d$
that 
	\begin{equation} \label{convergenceLocalLipschitz_forward_formulation:claim}
	\begin{split}
	&
	\left(\Exp{
		|
		U^{0}_{n,M,r}(T,x)
		-
		u(T,x)
		|^2}\right)^{\!\nicefrac12}
	\leq 
	e^{L(r)T} 
	\left[
	\frac{e^{\nicefrac{M}{2}}(1+2L(r)T)^n
	}{M^{\nicefrac{n}{2}}}
	\right]
	\\
	& \qquad \qquad 
	\cdot 
	\left[ 
	\left(
	\Exp{|u(0,X^{0}_{0,T,x})|^2} 
	\right)^{\!\nicefrac12} + \sqrt{T}\left| 
	\int_0^T \Exp{|f(s,X^{0}_{s,T,x},0)|^2}\!\,ds \right|^{\nicefrac12}
	\right]\!.
	\end{split} 
	\end{equation} 
\end{prop}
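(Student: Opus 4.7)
The plan is to reduce the forward-in-time formulation of the proposition to the backward-in-time setting of \cref{thm:convergenceLocalLipschitz} via a time reversal combined with a $\sqrt{2}$-spatial rescaling, and then translate the resulting error estimate back through that change of variables. First I would introduce the transformed data $\tilde{u}(t,x) := u(T-t,\sqrt{2}x)$, $\tilde{f}(t,x,v) := f(T-t,\sqrt{2}x,v)$, $\tilde{g}(x) := \tilde{u}(T,x) = u(0,\sqrt{2}x)$, and the transformed random objects $\tilde{W}^\theta_s := W^\theta_T - W^\theta_{T-s}$ and $\tilde{\mathcal{R}}^\theta := 1 - \mathcal{R}^\theta$. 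A straightforward chain-rule computation shows that $\tilde{u}|_{[0,T)\times\R^d}$ is $C^{1,2}$ and satisfies $(\tfrac{\partial}{\partial t}\tilde{u})(t,x) + \tfrac12(\Delta_x \tilde{u})(t,x) + \tilde{f}(t,x,\tilde{u}(t,x)) = 0$ with terminal datum $\tilde{g}$; the time reversal and rescaling preserve the coercivity, the local Lipschitz property (with the same function $L$), and the exponential growth hypothesis (with a rescaled constant); and $(\tilde{\mathcal{R}}^\theta)_{\theta\in\Theta}$, $(\tilde{W}^\theta)_{\theta\in\Theta}$ are again independent families of $\mathcal{U}_{[0,1]}$-distributed random variables and standard Brownian motions. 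Hence all hypotheses of \cref{thm:convergenceLocalLipschitz} hold for $\tilde{u}, \tilde{f}, \tilde{g}$ with the same $c, L, \rho, \norm{\cdot}$.

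Next, let $\tilde{U}^\theta_{n,M,r}$ denote the MLP approximations built from $\tilde{W}^\theta, \tilde{\mathcal{R}}^\theta$ and the truncations $\tilde{f}_r$ of $\tilde{f}$ via \cref{setting}. I would prove by induction on $n$ that for every $\theta\in\Theta$, $n\in\N_0$, $M\in\N$, $r\in(0,\infty)$, $t\in[0,T]$, $x\in\R^d$ one has $\P$-a.s.~the pathwise identity $\tilde{U}^\theta_{n,M,r}(T-t, \tfrac{x}{\sqrt{2}}) = U^\theta_{n,M,r}(t,x)$. The key identities needed in the induction step are the relation $\tilde{R}^\theta_{T-t} = (T-t) + t(1 - \mathcal{R}^\theta) = T - R^\theta_t$ between the backward and forward random times, and the increment identity $\tilde{W}^\theta_s - \tilde{W}^\theta_{T-t} = W^\theta_t - W^\theta_{T-s}$, which combined with the spatial rescaling yields $\tilde{X}^\theta_{T-t,s,\tfrac{x}{\sqrt{2}}} = \tfrac{1}{\sqrt{2}}X^\theta_{T-s,t,x}$; together with $\tilde{g}(\tilde{X}^\theta_{T-t,T,\tfrac{x}{\sqrt{2}}}) = u(0, X^\theta_{0,t,x})$ and the fact that $\tilde{f}_r(\tilde{R}^\theta_{T-t}, \tilde{X}^\theta_{T-t, \tilde{R}^\theta_{T-t}, \tfrac{x}{\sqrt{2}}}, v) = \fr(R^\theta_t, X^\theta_{R^\theta_t, t, x}, v)$, these identities collapse the explicit formula defining $\tilde{U}^\theta_{n,M,r}(T-t, \tfrac{x}{\sqrt{2}})$ term-by-term onto \eqref{convergenceLocalLipschitz_forward_formulation:transformed_mlp}.

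Finally, I would apply \cref{thm:convergenceLocalLipschitz} to $\tilde{u}, \tilde{f}, \tilde{g}, \tilde{U}$ at the point $(0, \tfrac{x}{\sqrt{2}})$. By the pathwise identification above, the left-hand side of \eqref{convergenceLocalLipschitz:claim1} becomes $(\Exp{|U^0_{n,M,r}(T,x)-u(T,x)|^2})^{\nicefrac12}$; the term $(\Exp{|\tilde{g}(\tilde{X}^0_{0,T,\tfrac{x}{\sqrt{2}}})|^2})^{\nicefrac12}$ simplifies to $(\Exp{|u(0,X^0_{0,T,x})|^2})^{\nicefrac12}$ because $\sqrt{2}\,\tilde{X}^0_{0,T,\tfrac{x}{\sqrt{2}}} = X^0_{0,T,x}$; and the substitution $r = T-s$ in the time integral converts $\int_0^T \Exp{|\tilde{f}(s,\tilde{X}^0_{0,s,\tfrac{x}{\sqrt{2}}},0)|^2}\,ds$ into $\int_0^T \Exp{|f(r,X^0_{r,T,x},0)|^2}\,dr$. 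Combining these translations yields exactly \eqref{convergenceLocalLipschitz_forward_formulation:claim}. The main obstacle is the inductive pathwise identification of the two MLP hierarchies, where careful bookkeeping of the nested multi-index structure and the different conventions for the ordering of the temporal arguments of $X^\theta$ in \cref{setting} versus in the statement of the proposition is needed; once that identification is in hand, everything else reduces to a mechanical rewriting of the error bound under the change of variables.
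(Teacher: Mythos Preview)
Your proposal is correct and follows essentially the same approach as the paper: time reversal combined with a $\sqrt{2}$-spatial rescaling to transform the forward problem into the backward setting of \cref{thm:convergenceLocalLipschitz}, verification that the transformed data satisfy all hypotheses there, identification of the forward MLP scheme with the backward one under this change of variables, and then a straightforward translation of the resulting error bound. The only cosmetic difference is that the paper defines $V^{\theta}_{n,M,r}(t,x) := U^{\theta}_{n,M,r}(T-t,x\sqrt{2})$ and derives its recursion directly to match \cref{setting}, whereas you set up the backward scheme $\tilde{U}$ independently and prove equality by induction; these are equivalent.
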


\begin{proof}[Proof of \cref{thm:convergenceLocalLipschitz_forward_formulation}]
Throughout this proof let 
	$v \colon [0,T]\times\R^d \to \R$
be the function which satisfies for every 
	$t\in [0,T]$, 
	$x\in \R^d$ 
that 
	$v(t,x) = u(T-t,x\sqrt{2})$, 
let 
	$F \colon [0,T]\times\R^d\times\R \to \R$ 
be the function which satisfies for every 
	$t \in [0,T]$, 
	$x \in \R^d$, 
	$w \in \R$ 
that 
	$F(t,x,w) = f(T-t,x\sqrt{2},w)$,
let 
	$\mathbf{F}_{r}\colon [0,T]\times\R^d\times\R \to \R$,  $r\in (0,\infty)$, 
be the functions which satisfy for every 
	$r \in (0,\infty)$,
	$t \in [0,T]$, 
	$x \in \R^d$, 
	$w \in \R$ 
that 
	$\mathbf{F}_{r}(t,x,w) = F(t,x,\min\{r,\max\{-r,w\}\})$, 
let 
	$\mathcal{S}^{\theta}\colon\Omega\to [0,1]$, $\theta\in\Theta$, 
satisfy for every 
	$\theta\in\Theta$ 
that 
	$\mathcal{S}^{\theta} = 1-\mathcal{R}^{\theta}$, 
let 
	$S^{\theta}\colon [0,T]\times\Omega \to [0,T]$, $\theta\in\Theta$, 
satisfy for every 
	$\theta\in\Theta$,
	$t\in [0,T]$
that 
	$S^{\theta}_t = t + (T-t)\mathcal{S}^{\theta}$, 
for every 
	$\theta\in\Theta$, 
	$t\in [0,T]$, 
	$s\in [t,T]$, 
	$x\in \R^d$ 
let 
	$Y^{\theta}_{t,s,x}\colon \Omega \to \R^d$ 
satisfy that 
	$Y^{\theta}_{t,s,x} = \frac{1}{\sqrt{2}}X^{\theta}_{T-s,T-t,x\sqrt{2}}
	= x + W^{\theta}_{T-t} - W^{\theta}_{T-s}
	= x + (W^{\theta}_T-W^{\theta}_{T-s}) - (W^{\theta}_{T}-W^{\theta}_{T-t})$, 
and let 
	$V^{\theta}_{n,M,r}\colon [0,T]\times\R^d\times\Omega \to \R$, 
	$\theta\in\Theta$, $n\in\N_0$, $M\in\N$, $r\in (0,\infty)$ 
satisfy for every 
	$n,M\in\N$,
	$\theta\in\Theta$, 
	$r\in (0,\infty)$, 
	$t\in [0,T]$, 
	$x\in \R^d$ 
that
	$V^{\theta}_{n,M,r}(t,x) = U^{\theta}_{n,M,r}(T-t,x\sqrt{2})$. 
Note that \eqref{convergenceLocalLipschitz_forward_formulation:assumptions_on_u} hence ensures for every 
	$t\in [0,T)$, 
	$x\in \R^d$ 
that 
	$v\in C([0,T]\times\R^d,\R)$, 
	$v|_{[0,T)\times\R^d}\in C^{1,2}([0,T)\times\R^d,\R)$, 
	$\inf_{a\in\R}[\sup_{(s,y)\in [0,T]\times\R^d} 
	(e^{a\norm{y}^2}|v(s,y)|)] < \infty$, 
and 
\begin{equation} \label{convergenceLocalLipschitz_forward_formulation:backward_solution}
	\begin{split}
	& 
	(\tfrac{\partial }{\partial t}v)(t,x) 
	+ 
	\tfrac12 
	(\Delta_x v)(t,x) 
	+ 
	F(t, x, v(t,x)) 
	\\
	& 
	= 
	-(\tfrac{\partial u}{\partial t})(T-t,x\sqrt{2}) 
	+ 
	(\Delta_x u)(T-t,x\sqrt{2}) 
	+ 
	f(T-t,x\sqrt{2},u(T-t,x\sqrt{2}))
	= 0.
	\end{split}
	\end{equation} 
In addition, note that the hypothesis that for every 
	$t\in [0,T]$, 
	$x\in \R^d$, 
	$w\in \R$ 
it holds that 
	$wf(t,x,w)\leq c(1+w^2)$ 
guarantees that for every 
	$t\in [0,T]$, 
	$x\in \R^d$, 
	$w\in \R$ 
it holds that 
	\begin{equation} \label{convergenceLocalLipschitz_forward_formulation:check_coercivity}
	wF(t,x,w) = w f(T-t,x\sqrt{2},w) \leq c (1 + w^2). 
	\end{equation} 
Moreover, observe that it holds for every 
	$t\in [0,T]$,
	$\theta\in\Theta$
that  
	\begin{equation} \label{R_and_S}
	S^{\theta}_t = 
	t + (T-t)\mathcal{S}^{\theta} 
	= 
	t + (T-t)(1-\mathcal{R}^{\theta}) 
	= 
	T - (T-t) \mathcal{R}^{\theta} 
	= 
	T - {R}^{\theta}_{T-t}. 
	\end{equation} 
Next observe that the assumption that for every 
	$r\in (0,\infty)$, 
	$t\in [0,T]$, 
	$x\in \R^d$, 
	$w,\mathfrak{w}\in [-r,r]$ 
it holds that 
$|f(t,x,w)-f(t,x,\mathfrak{w})| \leq L(r)|w-\mathfrak{w}|$ 
implies that for every 
	$r\in (0,\infty)$, 
	$t\in [0,T]$, 
	$x\in \R^d$, 
	$w,\mathfrak{w}\in [-r,r]$ 
it holds that 
	\begin{equation} \label{convergenceLocalLipschitz_forward_formulation:check_lipschitz_condition}
	|F(t,x,w)-F(t,x,\mathfrak{w})| 
	= 
	|f(T-t,x\sqrt{2},w)-f(T-t,x\sqrt{2},\mathfrak{w})| 
	\leq 
	L(r) |w-\mathfrak{w}|. 
	\end{equation} 
In addition, note that 
	\begin{equation} 
	e^{cT}\left[1+\sup_{x\in\R^d} |v(T,x)|^2\right]^{\nicefrac12} 
	= 
	e^{cT}\left[1+\sup_{x\in\R^d} 
	|u(0,x\sqrt{2})|^2\right]^{\nicefrac12}
	\leq \rho.
	\end{equation} 
Furthermore, note that for every 
	$t\in [0,T]$, 
	$x\in \R^d$
it holds that 
	\begin{equation} \label{convergenceLocalLipschitz_forward_formulation:condition_check1}
	\begin{split}
	& 
	\int_t^T 
	\Exp{|F(s,Y^{0}_{t,s,x},0)|}\!\,ds 
	= 
	\int_0^{T-t} 
	\Exp{|F(T-s,Y^{0}_{t,T-s,x},0)|}\!\,ds 
	\\
	& =
	\int_0^{T-t} 
	\Exp{|f(s,\sqrt{2}Y^{0}_{t,T-s,x},0)|}\!\,ds
	=  
	\int_0^{T-t} 
	\Exp{|f(s,X^{0}_{s,T-t,x\sqrt{2}},0)|}\!\,ds
	< 
	\infty 
	\end{split}
	\end{equation}
and 
	\begin{equation} \label{convergenceLocalLipschitz_forward_formulation:condition_check2}
	\begin{split}
	& \int_0^T \Exp{|F(s,Y^{0}_{0,s,\nicefrac{x}{\sqrt{2}}},0)|^2}\!\,ds 
	= 
	\int_0^T \Exp{|f(T-s,\sqrt{2}\,Y^{0}_{0,s,\nicefrac{x}{\sqrt{2}}},0)|^2}\!\,ds 
	\\
	& = 
	\int_0^T 
	\Exp{|f(T-s,X^{0}_{T-s,T,x},0)|^2}\!\,ds
	= 
	\int_0^T 
	\Exp{|f(s,X^{0}_{s,T,x},0)|^2}\!\,ds.  
	\end{split}
	\end{equation} 
Moreover, observe that \eqref{convergenceLocalLipschitz_forward_formulation:transformed_mlp} guarantees for every 
	$n,M\in\N$, 
	$\theta\in\Theta$, 
	$r\in (0,\infty)$, 
	$t\in [0,T]$, 
	$x\in \R^d$ 
that 
	\begin{equation}
	\begin{split}
	& 
	U^{\theta}_{n,M,r}(T-t,x\sqrt{2}) 
	\\ 
	& = 
	\frac{1}{M^n} 
	\left[ 
	\sum_{m=1}^{M^n} 
	\left(  u\big(0,X^{(\theta,0,-m)}_{0,T-t,x\sqrt{2}}\big) 
	+ 
	(T-t) \, f\big( R^{(\theta,0,m)}_{T-t},  X^{(\theta,0,m)}_{R^{(\theta,0,m)}_{T-t},T-t,x\sqrt{2}}, 0 \big) 
	\right) 
	\right] 
	\\ 
	& + 
	\sum_{k=1}^{n-1} 
	\frac{T-t}{M^{n-k}} 
	\bigg[  
	\sum_{m=1}^{M^{n-k}} 
	\bigg( 
	\fr\Big( 
	R^{(\theta,k,m)}_{T-t}, X^{(\theta,k,m)}_{R^{(\theta,k,m)}_{T-t},T-t,x\sqrt{2}}, U^{(\theta,k,m)}_{k,M,r}\big(
		R^{(\theta,k,m)}_{T-t}, X^{(\theta,k,m)}_{R^{(\theta,k,m)}_{T-t},T-t,x\sqrt{2}}\big)
	\Big) 
	\\ 
	& 
	\qquad \qquad \qquad \qquad 
	- 
	\fr\Big( 
	R^{(\theta,k,m)}_{T-t},
	X^{(\theta,k,m)}_{R^{(\theta,k,m)}_{T-t},T-t,x\sqrt{2}}, U^{(\theta,-k,m)}_{k-1,M,r}\big(R^{(\theta,k,m)}_{T-t},  X^{(\theta,k,m)}_{R^{(\theta,k,m)}_{T-t},T-t,x\sqrt{2}}\big)
	\Big)
	\bigg) 
	\Bigg]\!.   
	\end{split}
	\end{equation}
The fact that for every 
	$\theta\in\Theta$,
	$t\in [0,T]$, 
	$s\in [t,T]$, 
	$x\in \R^d$ 
it holds that 
	$X^{\theta}_{t,s,x\sqrt{2}} 
	= 
	\sqrt{2}Y^{\theta}_{T-s,T-t,x}$
and \eqref{R_and_S} therefore imply that for every 
	$n,M\in\N$, 
	$\theta\in\Theta$, 
	$r\in (0,\infty)$, 
	$t\in [0,T]$, 
	$x\in \R^d$ 
it holds that 
	\begin{equation}
	\begin{split}
	& 
	U^{\theta}_{n,M,r}(T-t,x\sqrt{2}) 
	\\ 
	& = 
	\frac{1}{M^n} 
	\left[ 
	\sum_{m=1}^{M^n} 
	\left(  u\big(0,\sqrt{2}\,Y^{(\theta,0,-m)}_{t,T,x}\big) 
	+ 
	(T-t) \, f\big( T-S^{(\theta,0,m)}_{t},  \sqrt{2}\,Y^{(\theta,0,m)}_{t,S^{(\theta,0,m)}_{t},x}, 0 \big) 
	\right) 
	\right] 
	\\ 
	& + 
	\sum_{k=1}^{n-1} 
	\tfrac{T-t}{M^{n-k}} 
	\bigg[  
	\sum_{m=1}^{M^{n-k}} 
	\bigg( 
	\fr\Big( 
	T-S^{(\theta,k,m)}_{t}, \sqrt{2}\,Y^{(\theta,k,m)}_{t,S^{(\theta,k,m)}_{t},x}, U^{(\theta,k,m)}_{k,M,r}\big(T-S^{(\theta,k,m)}_{t}, \sqrt{2}\,Y^{(\theta,k,m)}_{t,S^{(\theta,k,m)}_{t},x}\big)
	\Big) 
	\\ 
	& 
	\qquad \qquad \qquad
	- 
	\fr\Big( 
	T-S^{(\theta,k,m)}_{t},
	\sqrt{2}\,Y^{(\theta,k,m)}_{t,S^{(\theta,k,m)}_{t},x}, U^{(\theta,-k,m)}_{k-1,M,r}\big(T-S^{(\theta,k,m)}_{t},  \sqrt{2}\,Y^{(\theta,k,m)}_{t,S^{(\theta,k,m)}_{t},x}\big)
	\Big)
	\bigg) 
	\Bigg].   
\end{split}
\end{equation}
Combining this with \eqref{convergenceLocalLipschitz_forward_formulation:transformed_mlp} and the fact that for every 
	$M\in\N$, 
	$\theta\in\Theta$, 
	$n\in\N_0$, 
	$r\in (0,\infty)$, 
	$t\in [0,T]$, 
	$x\in \R^d$ 
it holds that 
	$V^{\theta}_{n,M,r}(t,x) = U^{\theta}_{n,M,r}(T-t,x\sqrt{2})$
yields hat for every 
	$\theta\in\Theta$, 
	$n,M\in\N$, 
	$r\in (0,\infty)$, 
	$t\in [0,T]$, 
	$x\in \R^d$ 
it holds that 
	$V^{\theta}_{0,M,r}(t,x) = 0$ 
and 
	\begin{equation} 
	\begin{split}
	& 
	V^{\theta}_{n,M,r}(t,x) 
	\\
	& = 
	\frac{1}{M^n} 
	\left[ 
	\sum_{m=1}^{M^n} 
	\left(  u\big(0,\sqrt{2}\,Y^{(\theta,0,-m)}_{t,T,x}\big) 
	+ 
	(T-t) \, f\big( T-S^{(\theta,0,m)}_{t},  \sqrt{2}\,Y^{(\theta,0,m)}_{t,S^{(\theta,0,m)}_{t},x}, 0 \big) 
	\right) 
	\right] 
	\\ 
	& + 
	\sum_{k=1}^{n-1} 
	\tfrac{T-t}{M^{n-k}} 
	\bigg[  
	\sum_{m=1}^{M^{n-k}} 
	\bigg( 
	\fr\Big( 
	T-S^{(\theta,k,m)}_{t},
	\sqrt{2}\,Y^{(\theta,k,m)}_{t,S^{(\theta,k,m)}_{t},x}, V^{(\theta,k,m)}_{k,M,r}\big(S^{(\theta,k,m)}_{t}, Y^{(\theta,k,m)}_{t,S^{(\theta,k,m)}_{t},x}\big)
	\!\Big) 
	\\ 
	& 
	\qquad \qquad \qquad 
	- 
	\fr\Big( 
	T-S^{(\theta,k,m)}_{t}, \sqrt{2}\,Y^{(\theta,k,m)}_{t,S^{(\theta,k,m)}_{t},x}, V^{(\theta,-k,m)}_{k-1,M,r}\big(S^{(\theta,k,m)}_{t}, Y^{(\theta,k,m)}_{t,S^{(\theta,k,m)}_{t},x}\big)
	\Big)\!
	\bigg) \!
	\Bigg].   
	\end{split}
	\end{equation} 
This and the fact that for every 
	$r \in (0,\infty)$, 
	$t \in [0,T]$, 
	$x \in \R^d$, 
	$w \in \R$ 
it holds that  
	$u(0,x\sqrt{2})=v(T,x)$
and
	$\mathbf{F}_{r}(t,x,w) 
	= 
	F(t,x,\min\{r,\max\{-r,w\}\})
	= 
	f(T-t,x\sqrt{2},\min\{r,\max\{-r,w\}\}) 
	= 
	\fr(T-t,x\sqrt{2},w)$ 
demonstrate that for every 
	$\theta\in\Theta$,  
	$n,M\in\N$,
	$r\in (0,\infty)$, 
	$t\in [0,T]$, 
	$x\in \R^d$ 
it holds that 
	$V^{\theta}_{0,M,r}(t,x) = 0$ 
and 
	\begin{equation} \label{main_theorem_forward_formulation:mlp_transformed_back}
	\begin{split}
	V^{\theta}_{n,M,r}(t,x) 
	& = 
	\frac{1}{M^n} 
	\left[ 
	\sum_{m=1}^{M^n} 
	\left( 
	v(T,Y^{(\theta,0,-m)}_{t,T,x}) 
	+ 
	(T-t) \,F(S^{(\theta,0,m)}_{t}, Y^{(\theta,0,m)}_{t,S^{(\theta,0,m)}_{t},x}, 0)
	\right) 
	\right] 
	\\
	& + 
	\sum_{k=1}^{n-1} 
	\frac{(T-t)}{M^{n-k}} \Bigg[ 
	\sum_{m=1}^{M^{n-k}} \bigg(
	\mathbf{F}_{r}\!\left(S^{(\theta,k,m)}_{t}, Y^{(\theta,k,m)}_{t,S^{(\theta,k,m)}_t,x}, 
	V^{(\theta,k,m)}_{k,M,r}\big(S^{(\theta,k,m)}_{t}, Y^{(\theta,k,m)}_{t,S^{(\theta,k,m)}_t,x}\big)
	\right)
	\\
	& \qquad \qquad \qquad \quad 
	- 
	\mathbf{F}_{r}\!\left(S^{(\theta,k,m)}_{t}, Y^{(\theta,k,m)}_{t,S^{(\theta,k,m)}_t,x}, 
	V^{(\theta,-k,m)}_{k-1,M,r}\big(S^{(\theta,k,m)}_{t}, Y^{(\theta,k,m)}_{t,S^{(\theta,k,m)}_t,x}\big)\!
	\right)\!
	\bigg)
	\Bigg].
	\end{split}
	\end{equation} 
This, \eqref{convergenceLocalLipschitz_forward_formulation:backward_solution}--\eqref{convergenceLocalLipschitz_forward_formulation:condition_check2}, and \cref{thm:convergenceLocalLipschitz} (with 
	$d=d$, 
	$T=T$, 
	$\Theta=\Theta$, 
	$f=F$, 
	$g=(\R^d\ni x \mapsto v(T,x)=u(0,x\sqrt{2})\in\R)$,
	$\fr=\mathbf{F}_{r}$, 
	$(\Omega,\cF,\P)=(\Omega,\cF,\P)$, 
	$\mathcal{R}^{\theta}=\mathcal{S}^{\theta}$, 
	$W^{\theta}=([0,T]\times\Omega\ni (t,\omega) \mapsto W^{\theta}_T(\omega) - W^{\theta}_{T-t}(\omega) \in \R^d)$, 
	$X^{\theta}_{t,s,x}=Y^{\theta}_{t,s,x}$, 
	$R^{\theta}=S^{\theta}$,
	$U^{\theta}_{n,M,r}=V^{\theta}_{n,M,r}$,
	$\rho=\rho$, 
	$c=c$, 
	$\norm{\cdot}=\norm{\cdot}$, 
	$L=L$, 
	$u=v$
for 
	$\theta\in\Theta$, 
	$r\in (0,\infty)$, 
	$t\in [0,T]$, 
	$s\in [t,T]$
	$x\in \R^d$
in the notation of \cref{thm:convergenceLocalLipschitz}) demonstrate that for every 
	$n\in\N_0$, 
	$M\in\N$, 
	$r\in [\rho,\infty)$, 
	$x\in\R^d$
it holds that 
	\begin{equation} 
	\begin{split}
	&
	\left(
	\Exp{|  V^{0}_{n,M,r} (0, \nicefrac{x}{\sqrt{2}}) - v(0, \nicefrac{x}{\sqrt{2}})  |^2 }
	\right)^{\!\nicefrac{1}{2}}
	\leq
	\frac{ e^{\nicefrac{M}{2}} ( 1 + 2 L(r) T )^n }{M^{\nicefrac{n}{2}}}
	\\
	& \quad
	\cdot
	e^{L(r)T}
	\left[ 
	\left( \Exp{|v(T,Y^{0}_{0,T,\nicefrac{x}{\sqrt{2}}})|^2} \right)^{\!\nicefrac12}
	+ 
	\sqrt{T}
	\left|
	\int_0^T \Exp{|F(s, Y^{0}_{0,s,\nicefrac{x}{\sqrt{2}}},0)|^2}\!\,ds\right|^{\nicefrac12}
	\right]\!.  
	\end{split} 
	\end{equation}	
Combining this with 	\eqref{convergenceLocalLipschitz_forward_formulation:condition_check2} and the fact that for every 
	$\theta\in\Theta$, 
	$n\in\N_0$, 
	$M\in\N$,
	$r\in (0,\infty)$,
	$t\in [0,T]$, 
	$x\in\R^d$ 
it holds that 
	$u(t,x) = v(T-t,\nicefrac{x}{\sqrt{2}})$, 
	$U^{\theta}_{n,M,r}(t,x) = V^{\theta}_{n,M,r}(T-t,\nicefrac{x}{\sqrt{2}})$, 
and 
	$\sqrt{2}\,Y^{0}_{0,T,\nicefrac{x}{\sqrt{2}}} = X^{0}_{0,T,x}$
establishes \eqref{convergenceLocalLipschitz_forward_formulation:claim}. The proof of \cref{thm:convergenceLocalLipschitz_forward_formulation} is thus completed.
\end{proof}

\section{Computational cost analysis for truncated MLP approximations}
\label{sec:complexity}

Our next goal is to estimate the overall complexity of the MLP approximation scheme. This is achieved in \cref{thm:main_theorem_forward_formulation} below. 
We first quote an elementary result (see \cite[Lemma 3.6]{Overcoming}) which provides a bound for the computational cost.  \cref{comp_error_vs_comp_cost}--\cref{sum_of_costs} are technical statements needed for the proof of \cref{thm:main_theorem_forward_formulation}. 

\begin{lemma}[Computational cost] \label{lemma:comp_cost}
Let 
	$d\in\N$, 
	$(\mathfrak{C}_{n,M})_{n\in\N_0,M\in\N}\subseteq\N_0$ 
satisfy for every
	$n,M\in\N$ 
that 
	$\mathfrak{C}_{0,M} = 0$ 
and 
	\begin{equation}
	\mathfrak{C}_{n,M} 
	\leq 
	(2d+1) M^n 
	+ 
	\sum_{l=1}^{n-1} M^{n-l}\left(d+1+\mathfrak{C}_{l,M}+\mathfrak{C}_{l-1,M}\right). 
	\end{equation}
Then it holds for every 
	$n,M\in\N$ 
that 
	$\mathfrak{C}_{n,M} \leq d(5M)^n$. 
\end{lemma}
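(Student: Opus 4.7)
The plan is to prove the bound $\mathfrak{C}_{n,M}\le d(5M)^n$ by straightforward induction on $n\in\N$ (for arbitrary fixed $M\in\N$ and $d\in\N$), exploiting the fact that the base $5M$ grows geometrically fast enough to absorb the recursive overhead plus the additive polynomial term. I would first separate out the base case $n=1$, where the sum $\sum_{l=1}^{n-1}(\ldots)$ is empty, and observe that $\mathfrak{C}_{1,M}\le (2d+1)M\le 5dM = d(5M)^1$ since $d\ge 1$.

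For the induction step, assume the claim up to $n-1$. The plan is to plug $\mathfrak{C}_{l,M}\le d(5M)^l$ and $\mathfrak{C}_{l-1,M}\le d(5M)^{l-1}$ into the recursion and simplify. Using $d+1\le 2d$ and $(5M)^{l-1}\le (5M)^l$ I would estimate
\begin{equation}
d+1+\mathfrak{C}_{l,M}+\mathfrak{C}_{l-1,M}\le 2d+2d(5M)^l\le 4d(5M)^l
\end{equation}
for every $l\in\{1,\ldots,n-1\}$, so that the recursion gives
\begin{equation}
\mathfrak{C}_{n,M}\le(2d+1)M^n+4d\,M^n\sum_{l=1}^{n-1}5^l=(2d+1)M^n+dM^n(5^n-5).
\end{equation}

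The final step is just algebra: the right-hand side equals $d(5M)^n+(2d+1-5d)M^n=d(5M)^n+(1-3d)M^n$, and since $d\ge 1$ the correction term $(1-3d)M^n$ is nonpositive, yielding $\mathfrak{C}_{n,M}\le d(5M)^n$. There is no real obstacle here; the only thing to watch is that the geometric-series bound $\sum_{l=1}^{n-1}5^l=(5^n-5)/4$ is tight enough to make the leading $(2d+1)M^n$ term get swallowed by the $-5dM^n$ slack produced when passing from $5\cdot 5^{n-1}$ to $5^n$. This is precisely what forces the constant $5$ (as opposed to a smaller number) in the claim.
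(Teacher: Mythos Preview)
Your induction argument is correct: the base case $\mathfrak{C}_{1,M}\le(2d+1)M\le 5dM$ and the inductive bound $d+1+\mathfrak{C}_{l,M}+\mathfrak{C}_{l-1,M}\le 4d(5M)^l$ combine with the exact geometric sum $\sum_{l=1}^{n-1}5^l=(5^n-5)/4$ to give $\mathfrak{C}_{n,M}\le d(5M)^n+(1-3d)M^n\le d(5M)^n$, exactly as you wrote.

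The paper itself does not give a proof but simply invokes \cite[Lemma~3.6]{Overcoming}; your self-contained induction is precisely the elementary argument underlying that cited result, so the approaches coincide in substance. The only difference is that you spell out the computation rather than defer to the reference, which has the advantage of making the role of the constant $5$ transparent (as you correctly point out, it is forced by the need for the $-5dM^n$ slack to absorb the $(2d+1)M^n$ term).
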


\begin{proof}[Proof of \cref{lemma:comp_cost}]
This is an immediate consequence of \cite[Lemma 3.6]{Overcoming} (with 
	$d=d$, 
	$RV_{n,M}=\mathfrak{C}_{n,M}$ 
for $n\in\N_0$, $M\in\N$ in the notation of \cite[Lemma 3.6]{Overcoming}). 
The proof of \cref{lemma:comp_cost} is thus completed. 
\end{proof} 


\begin{lemma} \label{comp_error_vs_comp_cost}
Let 
	$\alpha,\beta,c,\kappa,\rho \in (0,\infty)$, 
	$K\in\N_0$,
	$(\gamma_n)_{n\in\N}\subseteq [0,\infty)$, 
	$(\epsilon_{n,r})_{n\in\N,r\in [\rho,\infty)} \subseteq [0,\infty)$, 
let 
	$L\colon (0,\infty) \to [0,\infty)$ 
be a function, 
assume for every 
	$n\in\N$, 
	$r\in [\rho,\infty)$
that
	$\gamma_n \leq (\alpha n)^n$ 
and 
	$\epsilon_{n,r} \leq c e^{L(r)} \kappa^n(1+\beta L(r))^n n^{-\nicefrac{n}{2}}$, 
and let 
	$\varrho\colon\N\to (0,\infty)$
satisfy that 
	\begin{equation} \label{comp_error_vs_comp_cost:growth_ass}
		\limsup_{ n\to\infty} \left[ \frac{L(\varrho_n)}{\ln(n)} + \frac{1}{\varrho_n} \right] = 0. 
	\end{equation} 
Then there exist 
	$ \mathfrak{N}\colon (0,1] \to \N$ 
and 
	$ \mathfrak{c} \colon (0,\infty) \to [0,\infty) $
such that for every 
	$\delta \in (0,\infty)$, 
	$\varepsilon \in (0,1]$
it holds that 
	$ 
	\sup_{n\in [1,\mathfrak{N}_{\varepsilon}+K]\cap\N} \gamma_{n} 
	\leq 
	\mathfrak{c}_{\delta} \varepsilon^{-(2+2\delta)} $ 
	and 
	$
	\sup_{n\in [\mathfrak{N}_{\varepsilon},\infty)\cap\N} \epsilon_{n,\varrho_{n}} \leq \varepsilon $. 
\end{lemma}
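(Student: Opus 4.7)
The plan is to define $\mathfrak{N}_\varepsilon$ as the smallest positive integer $n$ such that $\sup_{m\geq n} \epsilon_{m,\varrho_m} \leq \varepsilon$; with this choice the second inequality in the claim holds by construction. The substantive work is to verify that $\mathfrak{N}_\varepsilon$ is finite for every $\varepsilon\in(0,1]$ and, crucially, that it grows slowly enough in $\ln(\nicefrac1\varepsilon)$ so that the naive bound $\gamma_n\leq (\alpha n)^n$ translates into the required polynomial estimate with exponent $2+2\delta$.

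First I would extract a quantitative decay rate for $\epsilon_{n,\varrho_n}$. The assumption \eqref{comp_error_vs_comp_cost:growth_ass} supplies, for every $\eta\in(0,\infty)$, some $N_\eta\in\N$ with $L(\varrho_n)\leq \eta\ln n$ for all $n\geq N_\eta$. Plugging this into the hypothesis on $\epsilon_{n,r}$ and taking logarithms yields, for $n\geq N_\eta$,
\begin{equation*}
\ln \epsilon_{n,\varrho_n} \leq \ln c + \eta\ln n + n\ln\kappa + n\ln\!\left(1+\beta\eta\ln n\right) - \tfrac{n}{2}\ln n .
\end{equation*}
Since $\ln(1+\beta\eta\ln n)/\ln n\to 0$, for every $\theta\in(0,\nicefrac12)$ there exists $N^\star_\theta\in\N$ with $\ln\epsilon_{n,\varrho_n}\leq -(\nicefrac12-\theta)\,n\ln n$ for $n\geq N^\star_\theta$. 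In particular $\epsilon_{n,\varrho_n}\to 0$, so $\mathfrak{N}_\varepsilon$ is well-defined. Moreover, minimality of $\mathfrak{N}_\varepsilon$ forces $\epsilon_{\mathfrak{N}_\varepsilon-1,\varrho_{\mathfrak{N}_\varepsilon-1}}>\varepsilon$ (when $\mathfrak{N}_\varepsilon\geq 2$), which combined with the previous display gives
\begin{equation*}
\mathfrak{N}_\varepsilon \ln \mathfrak{N}_\varepsilon \leq \tfrac{2}{1-2\theta}\ln(\nicefrac{1}{\varepsilon}) + O(\ln \mathfrak{N}_\varepsilon)
\end{equation*}
as $\varepsilon\searrow 0$.

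Next I would translate this growth bound into the estimate on $\gamma_n$. The function $n\mapsto (\alpha n)^n$ is eventually monotone increasing, so $\sup_{n\in [1,\mathfrak{N}_\varepsilon+K]\cap\N}\gamma_n$ is controlled by $\max\{C_0,\,(\alpha(\mathfrak{N}_\varepsilon+K))^{\mathfrak{N}_\varepsilon+K}\}$ for an absolute constant $C_0$ depending only on $\alpha$ and $K$. Since $K$ is fixed,
\begin{equation*}
\ln \gamma_{\mathfrak{N}_\varepsilon + K} \leq (\mathfrak{N}_\varepsilon+K)(\ln\alpha + \ln(\mathfrak{N}_\varepsilon+K)) = \mathfrak{N}_\varepsilon\ln\mathfrak{N}_\varepsilon\,(1+o(1)) \leq \tfrac{2}{1-2\theta}\ln(\nicefrac{1}{\varepsilon})(1+o(1)).
\end{equation*}
Given $\delta\in(0,\infty)$, I would first fix $\theta\in(0,\nicefrac12)$ small enough that $\tfrac{2}{1-2\theta}<2+\delta$ and then pick $\varepsilon^\star=\varepsilon^\star(\delta)\in(0,1]$ so small that the $o(1)$-factor above is at most $(2+2\delta)/(2+\delta)$ for $\varepsilon\leq\varepsilon^\star$. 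For such $\varepsilon$ this gives $\gamma_{\mathfrak{N}_\varepsilon+K}\leq \varepsilon^{-(2+2\delta)}$. For $\varepsilon\in(\varepsilon^\star,1]$ we have $\mathfrak{N}_\varepsilon\leq \mathfrak{N}_{\varepsilon^\star}$ and the supremum is bounded by a finite absolute constant which, since $\varepsilon^{-(2+2\delta)}\geq 1$, can be absorbed into a suitable $\mathfrak{c}_\delta$.

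The main obstacle is the sharpness of the decay rate in the first step: the factor $n^{-n/2}$ in the hypothesis on $\epsilon_{n,r}$ is precisely what produces the constant $2$ in the target exponent $2+2\delta$, and the intermediate factor $(1+\beta L(\varrho_n))^n$ would spoil the conclusion if $L(\varrho_n)$ grew as fast as $\ln n$; this is why the hypothesis \eqref{comp_error_vs_comp_cost:growth_ass} is formulated with $L(\varrho_n)=o(\ln n)$ rather than merely $L(\varrho_n)=O(\ln n)$. Once this is in place, the remainder of the argument is essentially bookkeeping to absorb the finitely many small values of $\varepsilon$ into the constant $\mathfrak{c}_\delta$.
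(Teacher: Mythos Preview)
Your approach is essentially the same as the paper's: both show via logarithms that the upper bound on $\epsilon_{n,\varrho_n}$ decays like $e^{-(\nicefrac12-o(1))n\ln n}$, define the stopping index by minimality, exploit minimality at the predecessor to bound $(\mathfrak{N}_\varepsilon-1)\ln(\mathfrak{N}_\varepsilon-1)$ by roughly $2\ln(\nicefrac1\varepsilon)$, and then feed this into $\gamma_n\le(\alpha n)^n$. One small point to tidy up: you define $\mathfrak{N}_\varepsilon$ directly through $\epsilon_{m,\varrho_m}$, but that quantity is only defined when $\varrho_m\ge\rho$, which can fail for small $m$; the paper avoids this by defining the stopping index through the explicit upper bound $ce^{L(\varrho_m)}\kappa^m(1+\beta L(\varrho_m))^m m^{-m/2}$ (defined for all $m$) and then uses $\varrho_n\to\infty$ together with a threshold $\eta$ to force the stopping index into the range where $\varrho_n\ge\rho$---your sketch needs the same adjustment, but the substance is unchanged.
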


\begin{proof}[Proof of \cref{comp_error_vs_comp_cost}]
Throughout this proof let 
	$\mathfrak{a}_{\delta} \in [0,\infty]$, $\delta\in (0,\infty)$,
and 
	$\mathfrak{b} \in [0,\infty)$
satisfy for every 
	$\delta\in (0,\infty)$ 
that 
	\begin{equation} \label{comp_error_vs_comp_cost:definition_of_a_delta}
	\mathfrak{a}_{\delta} 
	=
	c^{2+2\delta} 
	\sup_{n\in\N} 
	\left[ 
	\frac{[\max\{\alpha,1\} (n+1)]^{(n+1)}}{n^{n(1+\delta)}}
	e^{L(\varrho_n)(2+2\delta)}[\kappa(1+\beta L(\varrho_n))]^{n(2+2\delta)}
	\right]
	\end{equation}
and 
\begin{equation} \label{comp_error_vs_comp_cost:definition_of_b_delta}
	\mathfrak{b}
	= 
	[\max\{\alpha,1\}(K+1)]^{(K+1)}. 
	\end{equation} 
First, observe that the fact that for every 
	$t\in (0,\infty)$ 
it holds that 
	$\ln(t)\leq t-1$ 
and 
	\eqref{comp_error_vs_comp_cost:growth_ass} 
ensure that 	
	\begin{equation} 
	\begin{split}
	& 
	\limsup_{ n \to \infty } 
	\Big[ 
	\!\ln\!\left( ce^{L(\varrho_n)} \kappa^n( 1 + \beta L(\varrho_n))^n n^{-\nicefrac{n}{2}} \right)\!
	\Big]
	\\
	& = 
	\limsup_{ n \to \infty }
	\left[ \ln(c) + L(\varrho_n) 
	+ n \ln( \kappa )
	+ n \ln( 1+\beta L(\varrho_n) )
	- \frac{n}{2}\ln(n) \right] 
	\\
	& 
	\leq  
	\limsup_{ n \to \infty } 
	\left[ \ln(c) + L(\varrho_n) 
	+ n \ln( \kappa )
	+ n \beta L(\varrho_n)  
	- \frac{n}{2}\ln(n) \right] 
	\\
	& = 
	\limsup_{n\to\infty} 
	\left[ n \ln(n) 
	\left(
	\frac{\ln(c)}{n \ln(n)} 
	+ 
	\frac{L(\varrho_n)}{n \ln(n)} 
	+ 
	\frac{\ln(\kappa)}{\ln(n)}
	+ 
	\frac{\beta L(\varrho_n)}{\ln(n)} 
	- 
	\frac12
	\right)
	\right] 
	= -\infty. 
	\end{split}
	\end{equation} 
This and the fact that 
	$ \lim_{s\to -\infty} e^{s} = 0 $ 
imply that 
	\begin{equation} 
	\begin{split}
	0 
	& 
	\leq 
	\limsup_{n\to\infty} \left[ ce^{L(\varrho_n)} \kappa^n(1+\beta L(\varrho_n))^n n^{-\nicefrac{n}{2}} \right] 
	\\
	& 
	= \limsup_{n\to\infty} 
	\left[ 
	\exp\!\left( 
	\ln\!\left( ce^{L(\varrho_n)} \kappa^n ( 1 + \beta L(\varrho_n) )^n n^{-\nicefrac{n}{2}} \right)
	\right)
	\right] 
	=  0. 
	\end{split}
	\end{equation}
Hence, we obtain that there exist
	$N_{\varepsilon} \in \N$, $ \varepsilon \in (0,\infty) $, 
which satisfy for every 
	$\varepsilon \in (0,\infty)$ 
that 
	\begin{equation} \label{comp_error_vs_comp_cost:definition_of_N_epsilon}
	N_{\varepsilon} 
	= 
	\min\!\left\{ n\in\N\colon \sup_{m\in [n,\infty)\cap\N} \left[ ce^{L(\varrho_m)} \kappa^m (1+\beta L(\varrho_m))^m m^{-\nicefrac{m}{2}} \right] \leq \varepsilon \right\}\!.
	\end{equation} 	
Moreover, the assumption that 
	$\liminf_{n\to\infty} \varrho_n = \infty$ 
implies that there exists 
	$\mathfrak{n} \in \N$ 
which satisfies that 
	$\inf_{n\in [\mathfrak{n},\infty)\cap\N} \varrho_n \geq \rho$. 
Next let 
	$\eta \in (0,\infty)$ 
satisfy that 
	$\eta < ce^{L(\varrho_{\mathfrak{n}})}\kappa^{\mathfrak{n}}(1+\beta L(\varrho_{\mathfrak{n}}))^{\mathfrak{n}} \mathfrak{n}^{-\nicefrac{\mathfrak{n}}{2}}$. 
This implies for every 
	$\varepsilon \in (0,\eta]$ 
that 
	$N_{\varepsilon} > \mathfrak{n}$. 
Hence, we obtain that for every 
	$\varepsilon \in (0,\eta]$ 
it holds that 
	$\inf_{n\in [N_{\varepsilon},\infty)\cap\N} \varrho_n 
	\geq 
	\inf_{n\in [\mathfrak{n},\infty)\cap\N} \varrho_n 
	\geq \rho$. 
This, the assumption that for every 
	$n\in\N$, 
	$r\in [\rho,\infty)$ 
it holds that 
	$\epsilon_{n,r} \leq c e^{L(r)}\kappa^n (1+\beta L(r))^n n^{-\nicefrac{n}{2}}$, 
and \eqref{comp_error_vs_comp_cost:definition_of_N_epsilon} ensure that for every 
	$\varepsilon \in (0,\eta]$ 
it holds that 
	\begin{equation} \label{comp_error_vs_comp_cost:error_estimate_below_eta}
	\sup_{n\in [N_{\varepsilon},\infty) \cap \N} 
	\epsilon_{n,\varrho_{n}}
	\leq 
	\sup_{n\in [N_{\varepsilon},\infty) \cap \N} 
	[ c e^{L(\varrho_n)} \kappa^n (1+\beta L(\varrho_n) )^n 
	n^{-\nicefrac{n}{2}} ]
	\leq \varepsilon.
	\end{equation}  
Next let 
	$ E = \{\varepsilon \in (0,\infty)\colon N_{\varepsilon} > 1\}$. 
Observe that \eqref{comp_error_vs_comp_cost:definition_of_N_epsilon} yields for every 
	$ \varepsilon \in E $ 
that 
	\begin{equation} 
	(N_{\varepsilon}-1)^{\nicefrac{(N_{\varepsilon}-1)}{2}} < 
	\frac{c}{\varepsilon}e^{L(\varrho_{N_{\varepsilon}-1})}[\kappa(1+\beta L(\varrho_{N_{\varepsilon}-1}))]^{(N_{\varepsilon}-1)}. 
	\end{equation} 	
This and the assumption that for every 
	$ n \in \N $ 
it holds that 
	$ \gamma_n \leq (\alpha n)^n $ 
imply that for every 
	$ \varepsilon \in E $, 
	$ \delta \in (0,\infty) $
it holds that 
	\begin{equation} \label{comp_error_vs_comp_cost:estimate_01}
	\begin{split}
	&
	\sup_{n\in [1,N_{\varepsilon}+K]\cap\N}\gamma_{n}  
	\leq
	\sup_{n\in [1,N_{\varepsilon}+K]\cap\N} (\alpha n)^n
	\leq 
	\sup_{n\in [1,N_{\varepsilon}+K]\cap\N} (\max\{\alpha,1\} n)^n
	\\
	& =  
	[\max\{\alpha,1\} (N_{\varepsilon}+K)]^{N_{\varepsilon}+K} 
	= 
	\frac{[\max\{\alpha,1\} (N_{\varepsilon}+K)]^{N_{\varepsilon}+K}}{(N_{\varepsilon}-1)^{(N_{\varepsilon}-1)(1+\delta)}}	(N_{\varepsilon}-1)^{(N_{\varepsilon}-1)(1+\delta)}
	\\
	& \leq 
	\frac{[\max\{\alpha,1\} (N_{\varepsilon}+K)]^{N_{\varepsilon}+K}}{(N_{\varepsilon}-1)^{(N_{\varepsilon}-1)(1+\delta)}}
	\frac{c^{2+2\delta}}{\varepsilon^{2+2\delta}} 
	e^{L(\varrho_{N_{\varepsilon}-1})(2+2\delta)} 
	[\kappa(1+\beta L(\varrho_{N_{\varepsilon}-1}))]^{(N_{\varepsilon}-1)(2+2\delta)}
	\\
	& \leq 
	c^{2+2\delta} 
	\varepsilon^{-(2+2\delta)} 
	\sup_{n\in\N} 
	\left[ 
	\frac{[\max\{\alpha,1\} (n+K+1)]^{(n+K+1)}}{n^{n(1+\delta)}}
	e^{L(\varrho_n)(2+2\delta)}[\kappa(1+\beta L(\varrho_n))]^{n(2+2\delta)}
	\right]
	\\
	& = \mathfrak{a}_{\delta}\varepsilon^{-(2+2\delta)}. 
	\end{split}
	\end{equation} 	
Next observe that the fact that for every 
	$t\in (0,\infty)$ 
it holds that 
	$\ln(t) \leq t-1$ 
and \eqref{comp_error_vs_comp_cost:growth_ass} ensure once again that for every 
	$ \delta \in (0,\infty) $ 
it holds that 
	\begin{equation} 
	\begin{split}
	& \limsup_{n\to\infty} 
	\left[ 
	\ln\!\left( 
	\frac{[\max\{\alpha,1\} (n+K+1)]^{(n+K+1)}}{n^{n(1+\delta)}}
	e^{L(\varrho_n)(2+2\delta)}[\kappa(1+\beta L(\varrho_n))]^{n(2+2\delta)}\right)
	\right]
	\\
	& = 
	\limsup_{n\to\infty} 
	\bigg[ 
	(n+K+1) \ln(\max\{\alpha,1\}) 
	+ 
	(n+K+1) \ln(n+K+1) 
	- 
	n ( 1 + \delta) \ln(n) 
	\\
	& \qquad \qquad  + 
	L(\varrho_n) (2 + 2\delta) 
	+ 
	n(2+2\delta) \ln(\kappa)
	+ 
	n(2+2\delta) \ln(1 + \beta L(\varrho_n))
	\bigg] 
	\\
	& \leq 
	\limsup_{n\to\infty} 
	\bigg[ 
	n\ln(n)
	\Big( 
	\frac{(n+K+1)\ln(\max\{\alpha,1\})}{n\ln(n)} 
	+ 
	\frac{n+K+1}{n} \frac{\ln(n+K+1)}{\ln(n)}
	- 
	(1+\delta)  
	\\
	& \qquad \qquad 
	+ 
	\frac{L(\varrho_n)}{n\ln(n)}(2+2\delta) 
	+ 
	(2+2\delta) \frac{\ln(\kappa)}{\ln(n)}
	+ 
	(2+2\delta) \beta \frac{L(\varrho_n)}{\ln(n)} 
	\Big)
	\bigg]
	= 
	-\infty. 
	\end{split}
	\end{equation} 
This, \eqref{comp_error_vs_comp_cost:growth_ass}, and \eqref{comp_error_vs_comp_cost:definition_of_a_delta} imply for every 
	$ \delta \in (0,\infty) $ 
that 
	\begin{equation} \label{comp_error_vs_comp_cost:finite_sup}
	a_{\delta} 
	= 
	c^{2+2\delta} 
	\sup_{n\in\N} 
	\left[ 
	\frac{[\max\{\alpha,1\} (n+K+1)]^{(n+K+1)}}{n^{n(1+\delta)}}
	e^{L(\varrho_n)(2+2\delta)}[\kappa(1+\beta L(\varrho_n))]^{n(2+2\delta)}
	\right]
	< \infty. 
	\end{equation} 
Next observe that the assumption that for every 
	$n\in\N$ 
it holds that 
	$\gamma_n \leq (\alpha n)^n$ 
	and \eqref{comp_error_vs_comp_cost:definition_of_b_delta} 
ensure that for every 
	$ \varepsilon \in (0,\eta] \setminus E $, 
	$ \delta \in (0,\infty)$
it holds that 
	\begin{equation} 
	\sup_{n\in [1,N_{\varepsilon}+K]\cap\N} 
	\gamma_{n} 
	= 
	\sup_{n\in [1,K+1]\cap\N} \gamma_n 
	\leq 
	[\max\{\alpha,1\}(K+1)]^{(K+1)}
	\left[ 
		\frac{\eta}{\varepsilon}
	\right]^{(2+2\delta)} 
	= 
	\mathfrak{b}\eta^{(2+2\delta)}
	\varepsilon^{-(2+2\delta)}
	.
	\end{equation} 	
Combining this with
	\eqref{comp_error_vs_comp_cost:definition_of_a_delta}, 
	\eqref{comp_error_vs_comp_cost:definition_of_b_delta},
	\eqref{comp_error_vs_comp_cost:error_estimate_below_eta}, 
and 
	\eqref{comp_error_vs_comp_cost:estimate_01} 
we obtain that for every 
	$\delta\in (0,\infty)$, 
	$\varepsilon \in (0,\eta]$ 
it holds that 
	$
	\sup_{n\in [N_{\varepsilon},\infty)\cap\N} 
	\epsilon_{n,\varrho_{n}} 
	\leq \varepsilon
	$
and 
	\begin{equation} \label{comp_error_vs_comp_cost:estimate_02}
	\sup_{n\in [1,N_{\varepsilon}+K]\cap\N} \gamma_{n} 
	\leq 
	\varepsilon^{-(2+2\delta)} 
	\max\!\left\{	
		\mathfrak{a}_{\delta},
		\mathfrak{b}\eta^{(2+2\delta)}
	\right\}\!.
	\end{equation} 
Next let 
	$\mathfrak{N}_{\varepsilon}\in \N_0$, $\varepsilon\in (0,1]$, 
satisfy for every 
	$\varepsilon \in (0,1]$ 
that 
	\begin{equation} \label{comp_error_vs_comp_cost:definition_of_mathfrak_N}
	\mathfrak{N}_{\varepsilon} 
	= 
	\begin{cases} 
	N_{\varepsilon} & \colon 0 < \varepsilon \leq \eta \\
	N_{\eta} & \colon \eta < \varepsilon \leq 1.  
	\end{cases} 
	\end{equation}
This and \eqref{comp_error_vs_comp_cost:estimate_02} ensure that for every 
	$ \delta \in (0,\infty) $, 
	$ \varepsilon \in (\eta,1] $ 
it holds that 
	$
	\sup_{n\in [\mathfrak{N}_{\varepsilon},\infty) \cap\N} \epsilon_{n,\rho_{n}} 
	= 
	\sup_{n\in [{N}_{\eta},\infty) \cap \N} 
	\epsilon_{n,\rho_{n}}
	\leq \eta 
	\leq \varepsilon 
	$
and 
	\begin{equation} \label{comp_error_vs_comp_cost:estimate_03}
	\begin{split}
	\sup_{n\in [1,\mathfrak{N}_{\varepsilon}+K]\cap\N} \gamma_{n} 
	& = 
	\sup_{n\in [1,N_{\eta}+K]\cap\N} \gamma_{n} 
	\leq 
	\max\left\{
		\mathfrak{a}_{\delta}, 
		\mathfrak{b}\eta^{2+2\delta}
	\right\} \eta^{-(2+2\delta)} 
	=
	\max\left\{ 	
	\mathfrak{a}_{\delta}{\eta}^{-(2+2\delta)}, 
	\mathfrak{b}
	\right\} 
	\\
	& \leq 
	\max\left\{ 	
	\mathfrak{a}_{\delta}{\eta}^{-(2+2\delta)}, 
	\mathfrak{b}
	\right\} 
	\varepsilon^{-(2+2\delta)}
	. 
	\end{split}
	\end{equation} 
Combining this with 
	\eqref{comp_error_vs_comp_cost:estimate_02} and \eqref{comp_error_vs_comp_cost:definition_of_mathfrak_N} 
establishes that for every 
	$\delta \in (0,\infty)$,
	$\varepsilon \in (0,1]$
it holds that 
	\begin{equation} 
	\sup_{n\in [1,\mathfrak{N}_{\varepsilon}+K]\cap\N} \gamma_{n} 
	\leq 
	\left(
	\max\!\left\{ 1, \eta^{2+2\delta} \right\}
	\max\!\left\{ 
		\mathfrak{a}_{\delta}\eta^{-(2+2\delta)} , 
		\mathfrak{b} \right\}
	\right) 
	\varepsilon^{-(2+2\delta)} 
	\,\,\text{and}\,\,
	\sup_{n\in [\mathfrak{N}_{\varepsilon},\infty)\cap\N} 
	\epsilon_{n,\varrho_n} 
	\leq \varepsilon.  
	\end{equation} 
The proof of \cref{comp_error_vs_comp_cost} is thus completed. 
\end{proof}

\begin{lemma} \label{cumulative_cost_is_at_most_twice_final_cost}
Let 
	$\alpha\in [1,\infty)$. 
Then it holds for every 
	$n\in\N$ 
that 
	$\sum_{m=1}^n (\alpha m)^m \leq 2 (\alpha n)^n$. 
\end{lemma}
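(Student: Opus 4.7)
The plan is to show that the sequence $a_m := (\alpha m)^m$ grows at least geometrically with ratio $2$, which makes the partial sum geometric and easy to bound by twice the last term. Concretely, I would first establish the one-step ratio inequality
\begin{equation}
\frac{(\alpha(m+1))^{m+1}}{(\alpha m)^m}
=
\alpha(m+1) \left(1 + \tfrac{1}{m}\right)^{\!m}
\geq 2
\end{equation}
for every $m\in\N$ and $\alpha\in[1,\infty)$. Here I would use the elementary fact that $(1+\tfrac{1}{m})^m \geq 2$ for all $m\in\N$ (equality at $m=1$ and the sequence is increasing), together with $\alpha(m+1)\geq 1$ coming from $\alpha\geq 1$.

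Next, iterating the resulting bound $(\alpha m)^m \leq \tfrac12 (\alpha(m+1))^{m+1}$ downwards from $n$, I would conclude by a straightforward induction on $n-m$ that for every $n\in\N$ and every $m\in\{1,2,\ldots,n\}$ it holds that
\begin{equation}
(\alpha m)^m \leq \frac{1}{2^{n-m}}\,(\alpha n)^n.
\end{equation}
Summing over $m\in\{1,\ldots,n\}$ and using the geometric series bound $\sum_{k=0}^{n-1} 2^{-k} < 2$ then yields
\begin{equation}
\sum_{m=1}^{n} (\alpha m)^m
\leq (\alpha n)^n \sum_{m=1}^{n} \frac{1}{2^{n-m}}
= (\alpha n)^n \sum_{k=0}^{n-1} \frac{1}{2^k}
\leq 2 (\alpha n)^n,
\end{equation}
which is the claim.

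There is no real obstacle here; the only thing to be slightly careful about is verifying $(1+\tfrac{1}{m})^m \geq 2$ at the base case $m=1$ (where equality holds) and noting that the sequence $m\mapsto (1+\tfrac{1}{m})^m$ is nondecreasing. Everything else is either the hypothesis $\alpha\geq 1$ or elementary algebra.
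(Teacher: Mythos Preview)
Your proof is correct. Both your argument and the paper's end with the same geometric--series tail bound $\sum_{k=0}^{n-1} 2^{-k}\le 2$, but they reach the pointwise inequality $(\alpha m)^m \le 2^{-(n-m)}(\alpha n)^n$ by genuinely different routes. You prove the one--step ratio estimate $(\alpha(m+1))^{m+1}/(\alpha m)^m = \alpha(m+1)(1+\tfrac1m)^m \ge 2$, invoking the classical fact that $(1+\tfrac1m)^m$ is nondecreasing with value $2$ at $m=1$. The paper instead uses the much more elementary monotonicity $(\alpha m)^m \le (\alpha n)^m$ for $m\le n$, so that $(\alpha m)^m/(\alpha n)^n \le (\alpha n)^{-(n-m)}$, and then observes $\alpha n\ge 2$ for $n\ge 2$ (treating $n=1$ separately). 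The paper's approach is marginally lighter because it avoids any appeal to the monotonicity of $(1+\tfrac1m)^m$; your approach has the minor aesthetic advantage of not needing a separate base case. Either way the argument is a couple of lines.
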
 

\begin{proof}[Proof of \cref{cumulative_cost_is_at_most_twice_final_cost}] 
First, note that the claim is clear in the case $n=1$. Next observe that for all 
	$n\in\N\cap [2,\infty)$ 
it holds that 
	$\alpha n \geq 2$. 
This implies that for all 
	$n\in\N\cap [2,\infty)$ 
it holds that 
	\begin{equation}
	\begin{split}
	\sum_{m=1}^{n} \frac{(\alpha m)^m}{(\alpha n)^n} 
	& \leq  
	\sum_{m=1}^{n} \frac{(\alpha n)^m}{(\alpha n)^n} 
	= 
	\sum_{m=1}^{n} \frac{1}{(\alpha n)^{n-m}} 
	= 
	\sum_{k=0}^{n-1} \frac{1}{(\alpha n)^k}
	\leq 
	\sum_{k=0}^{n-1} \left(\frac{1}{2}\right)^{k} 
	\leq 2 . 
	\end{split}
	\end{equation}
The proof of \cref{cumulative_cost_is_at_most_twice_final_cost} is thus completed. 
\end{proof}


\begin{lemma} \label{sum_of_costs}
Let 
	$\alpha,\beta,c,\kappa,\rho \in (0,\infty)$, 
	$K\in\N_0$,
	$(\gamma_n)_{n\in\N}\subseteq [0,\infty)$,
	$(\epsilon_{n,r})_{n\in\N,r\in [\rho,\infty)}\subseteq [0,\infty)$, 
let 
	$L\colon (0,\infty) \to [0,\infty)$ 
be a function, 
assume for every 
	$n\in\N$, 
	$r\in [\rho,\infty)$
that		
	$\gamma_n \leq (\alpha n)^n$ 
and 
	$\epsilon_{n,r} \leq c e^{L(r)} \kappa^n(1+\beta L(r))^n n^{-\nicefrac{n}{2}}$, 
and let 
	$\varrho\colon\N\to (0,\infty)$
satisfy 
	$\limsup_{ n\to\infty} ( \frac{L(\varrho_n)}{\ln(n)} + \frac{1}{\varrho_n} ) = 0$. 
Then there exist 
	$ \mathfrak{N}\colon (0,1] \to \N$ 
and 
	$ \mathfrak{c}\colon (0,\infty) \to [0,\infty)$ 
such that for every 
	$\delta \in (0,\infty)$, 
	$\varepsilon \in (0,1]$
it holds that 
	$ 
	\sum_{n=1}^{\mathfrak{N}_{\varepsilon}+K} \gamma_{n} 
	\leq 
	\mathfrak{c}_{\delta} \varepsilon^{-(2+2\delta)} $ 
and 
	$
	\sup_{n\in [\mathfrak{N}_{\varepsilon},\infty)\cap\N} \epsilon_{n,\varrho_{n}} \leq \varepsilon $. 

\end{lemma}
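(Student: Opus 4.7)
The plan is to reduce the sum bound to the supremum bound already established in \cref{comp_error_vs_comp_cost} by combining it with the elementary inequality in \cref{cumulative_cost_is_at_most_twice_final_cost}. Observe first that, without loss of generality, I may assume $\alpha\geq 1$, since replacing $\alpha$ by $\max\{\alpha,1\}$ preserves the hypothesis $\gamma_n\leq(\alpha n)^n$ and leaves all other assumptions untouched.

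Next I would introduce the partial sums $\Gamma_n=\sum_{k=1}^{n}\gamma_k$ for $n\in\N$. Combining the hypothesis $\gamma_n\leq(\alpha n)^n$ with \cref{cumulative_cost_is_at_most_twice_final_cost} yields for every $n\in\N$ the estimate
\begin{equation*}
\Gamma_n\leq\sum_{k=1}^{n}(\alpha k)^k\leq 2(\alpha n)^n\leq (2\alpha n)^n,
\end{equation*}
where the last inequality uses $2\leq 2^n$ for $n\in\N$. Hence the sequence $(\Gamma_n)_{n\in\N}$ satisfies the same structural bound as $(\gamma_n)_{n\in\N}$, only with the constant $\alpha$ replaced by $2\alpha$.

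I would then apply \cref{comp_error_vs_comp_cost} with $\alpha$ replaced by $2\alpha$, with the sequence $(\gamma_n)_{n\in\N}$ replaced by $(\Gamma_n)_{n\in\N}$, and with the remaining parameters $\beta,c,\kappa,\rho,K,L,\varrho$ and $(\epsilon_{n,r})_{n\in\N,r\in[\rho,\infty)}$ unchanged. This produces functions $\mathfrak{N}\colon(0,1]\to\N$ and $\mathfrak{c}\colon(0,\infty)\to[0,\infty)$ such that for every $\delta\in(0,\infty)$, $\varepsilon\in(0,1]$ it holds that
\begin{equation*}
\sup_{n\in[1,\mathfrak{N}_{\varepsilon}+K]\cap\N}\Gamma_{n}\leq\mathfrak{c}_{\delta}\,\varepsilon^{-(2+2\delta)}
\qquad\text{and}\qquad
\sup_{n\in[\mathfrak{N}_{\varepsilon},\infty)\cap\N}\epsilon_{n,\varrho_{n}}\leq\varepsilon.
\end{equation*}
Since the sum $\sum_{n=1}^{\mathfrak{N}_{\varepsilon}+K}\gamma_{n}=\Gamma_{\mathfrak{N}_{\varepsilon}+K}$ is one of the values entering the left-hand supremum, the first inequality yields precisely the desired sum bound, while the second inequality is exactly the error bound demanded by the claim.

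There is no genuine obstacle here: the proposition is a direct corollary of the two preceding lemmas. The only mild technicality is to absorb the factor $2$ from \cref{cumulative_cost_is_at_most_twice_final_cost} into the $\alpha$ parameter (via $2(\alpha n)^n\leq (2\alpha n)^n$) so that the hypotheses of \cref{comp_error_vs_comp_cost} remain satisfied after passing from $\gamma_n$ to $\Gamma_n$.
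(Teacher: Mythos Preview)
Your proof is correct and uses the same two ingredients as the paper---\cref{comp_error_vs_comp_cost} and \cref{cumulative_cost_is_at_most_twice_final_cost}---but applies them in the reverse order. The paper first applies \cref{comp_error_vs_comp_cost} to the explicit monotone sequence $(\max\{\alpha,1\}n)^n$ (so that the supremum over $[1,\mathfrak{N}_\varepsilon+K]$ equals the final term), and only afterwards invokes \cref{cumulative_cost_is_at_most_twice_final_cost} to bound the sum by twice that final term, picking up a factor $2$ in $\mathfrak{c}_\delta$. You instead use \cref{cumulative_cost_is_at_most_twice_final_cost} first to show that the partial sums $\Gamma_n$ themselves satisfy a bound of the form $(2\alpha n)^n$, and then feed $(\Gamma_n)$ into \cref{comp_error_vs_comp_cost}. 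Both routes are equally short; your absorption of the factor $2$ into the $\alpha$-parameter via $2(\alpha n)^n\leq(2\alpha n)^n$ is a clean way to stay within the hypotheses of \cref{comp_error_vs_comp_cost}, whereas the paper's order has the minor advantage of applying \cref{comp_error_vs_comp_cost} to an explicit increasing sequence, making the passage from supremum to final term transparent.
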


\begin{proof}[Proof of \cref{sum_of_costs}]
First, observe that for every 
	$n\in\N$ 
it holds that 
	$\gamma_n \leq (\max\{\alpha,1\}n)^n$. 
\cref{comp_error_vs_comp_cost}
(with 
	$\alpha=\max\{\alpha,1\}$, 
	$\beta=\beta$, 
	$c=c$, 
	$\kappa=\kappa$, 
	$\rho=\rho$, 
	$K=K$, 
	$L=L$,
	$\varrho_n=\varrho_n$, 
	$\gamma_n=(\max\{\alpha,1\}n)^n$, 
	$\epsilon_{n,r}=\epsilon_{n,r}$
for 
	$r\in [\rho,\infty)$, 
	$n\in\N$
in the notation of \cref{comp_error_vs_comp_cost}) 
therefore guarantees that there exist 
	$\mathfrak{N}_{\varepsilon}\in\N$, $\varepsilon\in (0,1]$, 
and 
	$\mathfrak{c}_{\delta}\in [0,\infty)$, $\delta\in (0,\infty)$, 
such that for every 
	$\delta\in (0,\infty)$, 
	$\varepsilon \in (0,1]$ 
it holds that 
	$\sup_{n\in [1,\mathfrak{N}_{\varepsilon}+K]\cap\N} (\max\{\alpha,1\}n)^n \leq \mathfrak{c}_{\delta} \varepsilon^{-(2+2\delta)} $ 
and 
	$\sup_{n\in [\mathfrak{N}_{\varepsilon},\infty)\cap\N} \epsilon_{n,\varrho_n}\leq \varepsilon$. 
The fact that for every 
	$n\in\N$ 
it holds that 
	$\gamma_n \leq (\max\{\alpha,1\}n)^n$, 
the fact that for every 
	$N\in\N$ 
it holds that 
	$\sup_{n\in [1,N]\cap\N} (\max\{\alpha,1\}n)^n = (\max\{\alpha,1\}N)^N$, 
and \cref{cumulative_cost_is_at_most_twice_final_cost} hence imply that for every 
	$\varepsilon\in (0,1]$ 
it holds that 
	$\sup_{n\in [\mathfrak{N}_{\varepsilon},\infty)\cap\N} \epsilon_{n,\varrho_n} \leq \varepsilon$
and 
	\begin{equation}
	\sum_{n=1}^{\mathfrak{N}_{\varepsilon}+K} 
	\gamma_n 
	\leq
	\sum_{n=1}^{\mathfrak{N}_{\varepsilon}+K} 
	(\max\{\alpha,1\}n)^n 
	\leq 
	2 (\max\{\alpha,1\}(\mathfrak{N}_{\varepsilon}+K)^{(\mathfrak{N}_{\varepsilon}+K)} 
	\leq 2\mathfrak{c}_{\delta}\varepsilon^{-(2+2\delta)}. 
	\end{equation} 
The proof of \cref{sum_of_costs} is thus completed. 
\end{proof}

\begin{theorem} 
	\label{thm:main_theorem_forward_formulation}
Let
	$\rho,T\in (0,\infty)$,
	$c,\gamma,p \in [0,\infty)$,
	$K\in\N_0$,		
	$\Theta = \cup_{n\in\N} \Z^n$,
	$(f_d)_{d\in\N},
	(\fdr)_{d\in\N, r\in (0,\infty)} \subseteq C([0,T]\times\R^d\times\R,\R)$,
let 
	$L\colon (0,\infty)\to [0,\infty)$ be a function, 
let 
$(\Omega,\mathcal{F},\P)$ 
be a probability space, 
let
$\mathcal{R}^{\theta}\colon \Omega \to [0,1]$, $\theta\in\Theta$, 
be independent $\mathcal{U}_{[0,1]}$-distributed random variables, 
let 
$W^{d,\theta}\colon [0,T]\times\Omega\to\R^d$, $d\in\N$, $\theta\in\Theta$,
be independent standard Brownian motions, 
assume that 
	$(\mathcal{R}^{\theta})_{\theta\in\Theta}$
and
	$(W^{d,\theta})_{(d,\theta)\in\N\times\Theta}$ 
are independent, 
let 
	$R^{\theta}\colon [0,T]\times\Omega \to [0,T]$, $\theta\in\Theta$, 
satisfy for every 
$\theta\in\Theta$, 
$t\in [0,T]$
that 
$R^{\theta}_t = t \mathcal{R}^{\theta}$,
for every 
$d\in \N$,
$\theta\in \Theta$,	
$s\in [0,T]$, 
$t\in [s,T]$,
$x\in \R^d$
let 
$X^{d,\theta}_{s,t,x}\colon\Omega\to\R^d$
satisfy 
$
X^{d,\theta}_{s,t,x} 
= 
x + \sqrt{2}
(W^{d,\theta}_t - W^{d,\theta}_s)
$,  
assume for every 
	$d \in \N$,
	$r\in (0,\infty)$, 
	$t\in (0,T]$, 
	$x\in \R^d$, 
	$u,v\in [-r,r]$, 
	$w\in\R$ 
that  
	$wf_d(t,x,w) \leq c (1+w^2)$, 
	$\fdr(t,x,w) = f_d(t,x,\min\{r,\max\{-r,w\}\})$,  
	$\EXP{\int_0^t |f_d(s,X^{d,0}_{s,t,x},0)|\,ds} < \infty$,
and 
	$|f_d(t,x,u) - f_d(t,x,v)|\leq L(r)|u-v|$,
let 
	$u_d \in C([0,T]\times\R^d,\R)$, $d\in\N$,  
satisfy for every 
	$d\in\N$,
	$t\in (0,T]$, 
	$x\in \R^d$ 
that 
	$e^{cT}(1+ |u_d(0,x)|^2)^{\nicefrac12} \leq \rho$, 
	$
	\inf_{a\in \R} 
	[ 
	\sup_{s\in [0,T]}\sup_{y=(y_1,\ldots,y_d)\in\R^d} (e^{a(|y_1|^2+\ldots+|y_d|^2)}|u_d(s,y)|) 
	] 
	< \infty
	$,
	$u_d|_{(0,T]\times\R^d}\in C^{1,2}((0,T]\times\R^d,\R)$, 
and 
	\begin{equation}
	\label{prop:main_theorem_assumptions_on_u}
	(\tfrac{\partial}{\partial t}u_d)(t,x) 
	= 
	(\Delta_x u_d)(t,x) 
	+ 
	f_d(t,x,u_d(t,x)),
	\end{equation}  
let 
$U^{d,\theta}_{n,M,r}\colon [0,T]\times\R^d\times\Omega\to\R$, 
$d,M\in\N$,
$\theta\in\Theta$,  
$n\in\N_0$, 
$r\in (0,\infty)$, 
satisfy for every 
$d,n,M\in\N$, 
$\theta\in\Theta$,
$r\in (0,\infty)$,
$t\in [0,T]$, 
$x\in \R^d$ 
that
$U^{d,\theta}_{0,M,r}(t,x) = 0$
and
\begin{equation}
\begin{split}
\label{main_theorem_forward_formulation:transformed_mlp}
& 
U^{d,\theta}_{n,M,r}(t,x) 
= 
\frac{1}{M^n}\left[ 
\sum_{m=1}^{M^n} 
\left( 
u_d(0,X^{d,(\theta,0,-m)}_{0,t,x}) 
+ 
t \, f_d\big(R^{(\theta,0,m)}_{t}, 
X^{d,(\theta,0,m)}_{R^{(\theta,0,m)}_{t},t,x},0\big)
\right)
\right] 
\\ 
& \qquad 
+ 
\sum_{k=1}^{n-1} \frac{t}{M^{n-k}} 
\Bigg[ 
\sum_{m=1}^{M^{n-k}} 
\bigg(
\fdr\Big( 
R^{(\theta,k,m)}_{t}, 
X^{d,(\theta,k,m)}_{R^{(\theta,k,m)}_{t},t,x},
U^{d,(\theta,k,m)}_{k,M,r}\big(
R^{(\theta,k,m)}_{t}, 
X^{d,(\theta,k,m)}_{R^{(\theta,k,m)}_{t},t,x}
\big)
\Big)
\\ 
& \qquad\qquad\qquad\qquad
-
\fdr\Big(R^{(\theta,k,m)}_{t}, 
X^{d,(\theta,k,m)}_{R^{(\theta,k,m)}_{t},t,x}, 
U^{d,(\theta,-k,m)}_{k-1,M,r}\big(R^{(\theta,k,m)}_{t}, 
X^{d,(\theta,k,m)}_{R^{(\theta,k,m)}_{t},t,x}\big)
\Big)
\bigg)
\Bigg]\!, 
\end{split}
\end{equation}  
let 
	$\varrho\colon\N\to (0,\infty)$ 
satisfy	
	$\limsup_{n\to\infty} ( \frac{L(\varrho_n)}{\ln(n)} + \frac{1}{\varrho_n}) = 0$, 
and let 
	$\mathfrak{C}_{d,n,M}\in \N_0$, $d,M\in\N$, $n\in\N_0$,  
satisfy for every 
	$d,n,M\in\N$ 
that 
	$\mathfrak{C}_{d,0,M} = 0$ 
and 
	\begin{equation} \label{prop:main_thm_computational_cost}
	\mathfrak{C}_{d,n,M} 
	\leq 
	(2d+1)M^n 
	+ 
	\sum_{l=1}^{n-1} M^{n-l} \,(d+1+\mathfrak{C}_{d,l,M} 
	+ 
	\mathfrak{C}_{d,l-1,M}). 
	\end{equation} 
Then there exist 
	$\mathfrak{N} \colon (0,1] \to \N$
and 
	$\mathfrak{c}\colon (0,\infty) \to [0,\infty)$
such that for every 
	$d\in \N$, 
	$\delta \in (0,\infty)$, 
	$\varepsilon \in (0,1]$, 
	$x\in\R^d$ 
with 
	$(\int_0^T \EXP{|f_d(s,X^{d,0}_{s,T,x},0)|^2}\,ds)^{\nicefrac12}\leq\gamma d^p$ 
it holds that 
	$
	\sum_{n = 1}^{\mathfrak{N}_{(\varepsilon\slash d^p)}+K} 
	\mathfrak{C}_{d,n,n}
	\leq \mathfrak{c}_{\delta} d^{1+p(2+\delta)} \varepsilon^{-(2+\delta)} 
	$
and 
	\begin{equation} \label{main_theorem_forward_formulation:claim} 
	\left[ 
	\sup_{n\in [\mathfrak{N}_{(\varepsilon\slash d^p)},\infty)\cap\N} 
	\left(
	\Exp{|U^{d,0}_{n,n,\varrho_n}(T,x) - u_d(T,x)|^2} 
	\right)^{\!\nicefrac12}
	\right]
	\leq \varepsilon.
	\end{equation}
\end{theorem}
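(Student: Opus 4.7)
I would combine three results available in the excerpt: the error estimate in \cref{thm:convergenceLocalLipschitz_forward_formulation}, the cost recursion bound in \cref{lemma:comp_cost}, and the abstract complexity lemma \cref{sum_of_costs}. The dimension-dependent factor $d^p$ arising from the source term hypothesis is absorbed by applying \cref{sum_of_costs} at the rescaled tolerance $\varepsilon/d^p$ rather than at $\varepsilon$; the extra factor $d$ coming from the cost recursion then produces the precise exponent $1+p(2+\delta)$ on $d$ in the conclusion.

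The first step is to apply \cref{thm:convergenceLocalLipschitz_forward_formulation} for each fixed $d\in\N$. All of its hypotheses are inherited from those of the present theorem: coercivity and local Lipschitz continuity of $f_d$, the truncation identity for $\fdr$, the exponential growth and $C^{1,2}$-regularity of $u_d$, the PDE \eqref{prop:main_theorem_assumptions_on_u}, and the initial-data bound $e^{cT}(1+|u_d(0,\cdot)|^2)^{\nicefrac12}\leq\rho$. Specialising to $M=n$, using $|u_d(0,\cdot)|\leq \rho e^{-cT}$ (which is immediate from the previous inequality), the assumed bound $(\int_0^T\EXP{|f_d(s,X^{d,0}_{s,T,x},0)|^2}\,ds)^{\nicefrac12}\leq\gamma d^p$, and $d^p\geq 1$, I obtain
\begin{equation}
(\Exp{|U^{d,0}_{n,n,r}(T,x)-u_d(T,x)|^2})^{\nicefrac12}
\leq
d^p\,\mathfrak{c}_0\,e^{L(r)T}(\sqrt{e})^n(1+2L(r)T)^n\,n^{-\nicefrac{n}{2}},
\end{equation}
with $\mathfrak{c}_0=\rho e^{-cT}+\sqrt{T}\gamma$ independent of $d$, $n$, $r$, and $x$.

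In parallel, \cref{lemma:comp_cost} (invoked with $M=n$) delivers $\mathfrak{C}_{d,n,n}\leq d(5n)^n$. I then apply \cref{sum_of_costs} with $\alpha=5$, $\kappa=\sqrt{e}$, $\beta=2$, $c=\mathfrak{c}_0$, $K=K$, $\gamma_n=(5n)^n$, and $\epsilon_{n,r}=\mathfrak{c}_0e^{L(r)T}(\sqrt{e})^n(1+2L(r)T)^n n^{-\nicefrac{n}{2}}$, with the Lipschitz function $L$ replaced throughout by $TL$; the hypothesis on $\varrho$ is preserved under this replacement since $T$ is a fixed positive constant. \cref{sum_of_costs} then supplies $\mathfrak{N}\colon(0,1]\to\N$ and $\tilde{\mathfrak{c}}\colon(0,\infty)\to[0,\infty)$, both independent of $d$, such that the corresponding bounds hold at every tolerance $\tilde\varepsilon\in(0,1]$ and every parameter $\tilde\delta\in(0,\infty)$. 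Evaluating at $\tilde\varepsilon=\varepsilon/d^p\in(0,1]$ and $\tilde\delta=\delta/2$ yields
\begin{equation}
\sum_{n=1}^{\mathfrak{N}_{\varepsilon/d^p}+K}\mathfrak{C}_{d,n,n}\leq d\sum_{n=1}^{\mathfrak{N}_{\varepsilon/d^p}+K}(5n)^n\leq d\,\tilde{\mathfrak{c}}_{\delta/2}\,(\varepsilon/d^p)^{-(2+\delta)}=\tilde{\mathfrak{c}}_{\delta/2}\,d^{1+p(2+\delta)}\varepsilon^{-(2+\delta)},
\end{equation}
while for $n\geq\mathfrak{N}_{\varepsilon/d^p}$ one has $(\Exp{|U^{d,0}_{n,n,\varrho_n}(T,x)-u_d(T,x)|^2})^{\nicefrac12}\leq d^p\,\epsilon_{n,\varrho_n}\leq d^p(\varepsilon/d^p)=\varepsilon$.

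The main technical obstacle is the parameter bookkeeping that produces the precise exponents $2+\delta$ on $\varepsilon^{-1}$ and $1+p(2+\delta)$ on $d$: the sharp form $2+\delta$ (rather than $2+2\delta$) is recovered by invoking \cref{sum_of_costs} with the smaller parameter $\delta/2$, and the extra $d$-factor from the cost recursion combines with the $d^{p(2+\delta)}$ coming from $(\varepsilon/d^p)^{-(2+\delta)}$ to give precisely $d^{1+p(2+\delta)}$. Once this alignment is in place, every remaining step is a direct invocation of the preceding results.
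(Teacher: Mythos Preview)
Your proposal is correct and follows essentially the same route as the paper: apply \cref{thm:convergenceLocalLipschitz_forward_formulation} to obtain a $d$-uniform error bound up to the factor $d^p$, use \cref{lemma:comp_cost} for the cost bound $\mathfrak{C}_{d,n,n}\leq d(5n)^n$, and feed both into \cref{sum_of_costs} with $\alpha=5$, $\kappa=\sqrt{e}$, $\beta=2$, and $L$ replaced by $TL$, then evaluate at tolerance $\varepsilon/d^p$. Your explicit substitution $\tilde\delta=\delta/2$ to pass from the exponent $2+2\delta$ produced by \cref{sum_of_costs} to the claimed $2+\delta$ is a detail the paper leaves implicit in its choice of $\mathfrak{c}$, but otherwise the arguments coincide.
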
 

\begin{proof}[Proof of \cref{thm:main_theorem_forward_formulation}]
Throughout this proof let 
	$\mathfrak{X}_d \subseteq \R^d$, $d\in\N$, 
satisfy for every 
	$d\in\N$ 
that 
	\begin{equation} 
	\mathfrak{X}_d 
	= 
	\left\{ 
	x \in \R^d \colon 
	\left(
	\int_0^T \Exp{\left|f_d(s,X^{d,0}_{s,T,x},0)\right|^2}\!\,ds\right)^{\!\nicefrac12} \leq \gamma d^p
	\right\}\!, 
	\end{equation} 
let 
	$\epsilon_{n,r} \in [0,\infty]$, $n\in\N$, $r\in (0,\infty)$, 
satisfy for every 
	$n\in\N$, 
	$r\in (0,\infty)$ 
that 
	\begin{equation} 
	\epsilon_{n,r} 
	= 
	\sup\!\left( 
	\left\{ 
	\frac{1}{d^p}
	\left( 
	\Exp{\left| U^{d,0}_{n,n,r}(T,x) - u_d(T,x) \right|^2}
	\right)^{\!\nicefrac12} 
	\colon x\in\mathfrak{X}_d, d\in\N
	\right\}
	\cup 
	\{0\}
	\right) 
\!,
\end{equation} 
and let 
	$\gamma_n\in [0,\infty]$, $n\in\N$, 
satisfy for every 
	$n\in\N$ 
that 
\begin{equation} 
\gamma_{n} 
= 
\sup_{d\in\N}\left(\frac{\mathfrak{C}_{d,n,n}}{d}\right)\!.
\end{equation}
Note that \cref{lemma:comp_cost} demonstrates that for every 
$d,n,M\in \N$ 
it holds that 
$\mathfrak{C}_{d,n,M} \leq d (5M)^n$. 
This implies for every 
$n\in\N$ 
that 
\begin{equation} \label{convergeLocalLipschitz:gamma_estimate}
\gamma_n 
= 
\sup_{d\in\N} \left(
\frac{\mathfrak{C}_{d,n,n}}{d} \right) 
\leq 
\sup_{d\in\N} \left(
\frac{d(5n)^n}{d} \right) 
= (5n)^n < \infty.
\end{equation} 
Next observe that \cref{thm:convergenceLocalLipschitz_forward_formulation} (with 
	$d=d$, 
	$T=T$, 
	$\Theta=\Theta$, 
	$f=f_d$, 
	$\fr=\fdr$, 
	$(\Omega,\cF,\P)=(\Omega,\cF,\P)$, 
	$\mathcal{R}^{\theta}=\mathcal{R}^{\theta}$, 
	$W^{\theta}=W^{d,\theta}$, 
	$R^{\theta}=R^{\theta}$, 
	$X^{\theta}_{t,s,x}=X^{d,\theta}_{t,s,x}$, 
	$U^{\theta}_{n,M,r}(t,x)=U^{d,\theta}_{n,M,r}(t,x)$, 
	$\rho=\rho$, 
	$c=c$, 
	$\norm{\cdot}=(\R^d\ni y=(y_1,\ldots,y_d)\mapsto |y_1|^2+\ldots+|y_d|^2\in\R)$, 
	$L=L$, 
	$u=u_d$
for 
	$d,M\in\N$, 
	$\theta\in\Theta$, 	
	$n\in\N_0$,	
	$r\in (0,\infty)$, 
	$t\in [0,T]$, 
	$s\in [t,T]$, 
	$x\in \R^d$
in the notation of \cref{thm:convergenceLocalLipschitz_forward_formulation}) 
ensures that it holds for every 
	$d,M \in \N$,
	$n \in \N_0$, 
	$r\in [\rho,\infty)$, 
	$x \in \R^d$
that 
	\begin{equation} 
	\begin{split}
	&
	\left(
	\Exp{|  U^{d,0}_{n,M,r} (T, x) - u_d(T, x)  |^2 }
	\right)^{\!\nicefrac{1}{2}}
	\leq
	\frac{ e^{\nicefrac{M}{2}} ( 1 + 2 L(r) T )^n }{M^{\nicefrac{n}{2}}}
	\\
	& \qquad \qquad
	\cdot
	e^{L(r)T}
	\left[ 
	\left( \Exp{|u_d(0,X^{d,0}_{0,T,x})|^2} \right)^{\!\nicefrac12}
	+ 
	\sqrt{T}
	\left|
	\int_0^T \Exp{|f_d(s,X^{d,0}_{s,T,x},0)|^2}\!\,ds\right|^{\nicefrac12}
	\right]\!. 
	\end{split} 
	\end{equation}
This implies that for every 
$n\in\N$, 
$r\in [\rho,\infty)$ 
it holds that 
	\begin{equation} 
	\begin{split}
	\epsilon_{n,r} 
	& = 
	\sup\!\left( 
	\left\{ 
	\frac{1}{d^p}
	\left( 
	\Exp{\left| U^{d,0}_{n,n,r}(T,x) - u_d(T,x) \right|^2}
	\right)^{\!\nicefrac12} 
	\colon x\in\mathfrak{X}_d, d\in\N
	\right\}
	\cup 
	\{0\}
	\right)
	\\
	& \leq 
	\sup_{d\in\N} 
	\left(
	\frac{e^{\nicefrac{n}{2}}(1+2L(r)T)^n}{n^{\nicefrac{n}{2}}}
	\frac{e^{L(r)T}}{d^p}
	\left[ 
	\sup_{m\in\N}\sup_{x\in\R^d} |u_m(0,x)| 
	+ 
	\gamma \sqrt{T} d^p
	\right]
	\right) 
	\\
	& \leq 
	\left[ 
	\left( \sup_{d\in\N}\sup_{x\in\R^d} 
	|u_d(0,x)| \right) 
+ 
\gamma \sqrt{T}
\right]
e^{L(r)T}
\frac{e^{\nicefrac{n}{2}}(1+2L(r)T)^n}{n^{\nicefrac{n}{2}}} < \infty.
\end{split}
\end{equation} 
This, \eqref{convergeLocalLipschitz:gamma_estimate},
and \cref{sum_of_costs} (with 
	$\alpha = 5$, 
	$\beta = 2$,
	$ c = 1 + \sup_{d\in\N}\sup_{x\in\R^d} 
	|u_d(0,x)| 
	+ 
	\gamma\sqrt{T}
	$, 
	$\kappa = \sqrt{e}$, 
	$\rho = \rho$, 
	$K = K$, 
	$L(s) = L(s)T $, 
	$\gamma_n = \gamma_n $, 
	$\epsilon_{n,r} = \epsilon_{n,r} $, 
	$\varrho_n=\varrho_n$ 
for 
$ n \in \N $, 
$ r \in [\rho,\infty) $,
$ s \in (0,\infty) $
in the notation of \cref{comp_error_vs_comp_cost}) 
guarantee that there exist
$\mathfrak{N} \colon (0,1] \to \N$
and 
$\mathfrak{c} \colon (0,\infty) \to [0,\infty)$
which satisfy that for every 
	$\delta \in (0,\infty)$, 
	$\varepsilon \in (0,1]$
it holds that 
	\begin{equation} 
	\left[\smallsum\limits_{n=1}^{\mathfrak{N}_{\varepsilon}+K} 
	\gamma_n 
	\right]
	\leq 
	\mathfrak{c}_{\delta} \varepsilon^{-(2+2\delta)}
	\qquad
	\text{and}\qquad 
	\sup_{n\in [\mathfrak{N}_{\varepsilon},\infty)\cap\N} 
	\epsilon_{n,\varrho_n} \leq \varepsilon 
	\end{equation}
This implies that for every 
	$d\in\N$, 
	$\delta \in (0,\infty)$, 
	$\varepsilon\in (0,1]$, 
	$x\in\mathfrak{X}_d$
it holds that 
\begin{equation} \label{thm_main_theorem_backward_formulation:comp_cost}
\left[ 
\smallsum\limits_{n = 1}^{\mathfrak{N}_{(\varepsilon\slash d^p)}+K}
\mathfrak{C}_{d,n,n} \right]
\leq 
\mathfrak{c}_{\delta} d 
\left[\frac{\varepsilon}{d^p}\right]^{-(2+2\delta)} 
= 
\mathfrak{c}_{\delta} d^{1+p(2+2\delta)} 
\varepsilon^{-(2+2\delta)} 
\end{equation}
and
\begin{equation} 	
\begin{split}
& 
\sup_{n\in [\mathfrak{N}_{(\varepsilon\slash d^p)},\infty)\cap\N}
\left(
\Exp{|U^{d,0}_{n,n,\varrho_{n}}(T,x) - u_d(T,x)|^2} 
\right)^{\!\nicefrac12}
\leq 
d^p	
\left[ 
\sup_{n\in [\mathfrak{N}_{(\varepsilon\slash d^p)},\infty)\cap\N} 
\epsilon_{n,\varrho_n} 
\right]
\leq 
d^p \frac{\varepsilon}{d^p} 
= \varepsilon
. 
\end{split}
\end{equation} 
This establishes \eqref{main_theorem_forward_formulation:claim}. 
The proof of \cref{thm:main_theorem_forward_formulation} is thus completed.
\end{proof}

\section{MLP approximations for Allen--Cahn type partial differential equations}
\label{sec:applications}

In this section we consider sample applications of \cref{thm:main_theorem_forward_formulation}. This provides us with examples of Allen--Cahn PDEs for which the curse of dimensionality can be broken in numerical approximations. 

\begin{cor} \label{cor_lip}
Let
	$T \in (0,\infty)$,
	$c,\rho \in [0,\infty)$,
	$K \in \N_0$,
	$\Theta = \cup_{n\in\N} \Z^n$, 
	$f\in C(\R,\R)$, 
	$(\fr)_{r\in (0,\infty)} \subseteq C(\R,\R)$, 
let 
	$L\colon (0,\infty)\to [0,\infty)$ 
be a function, 
assume for every 
	$r\in (0,\infty)$, 
	$u,v\in [-r,r]$, 
	$w\in\R$ 
that
	$wf(w)\leq c(1+w^2)$, 
	$\fr(w) = f(\min\{r,\max\{-r,w\}\})$, 
and 
	$|f(u) - f(v)|\leq L(r)|u-v|$,   
let 
	$\varrho\colon \N \to (0,\infty)$ 
satisfy
	$\limsup_{n\to\infty} ( \frac{L(\varrho_n)}{\ln(n)} + \frac{1}{\varrho_n}) = 0$, 
let 
	$u_d\in C([0,T]\times\R^d,\R)$, $d\in\N$, 
satisfy for every 
	$d\in \N$, 
	$t\in [0,T]$, 
	$x\in \R^d$ 
that 
	$e^{cT}(1 + |u_d(0,x)|^2)^{\nicefrac12} 
	\leq \rho$,  
	$u_d|_{(0,T]\times\R^d}\in C^{1,2}((0,T]\times\R^d,\R)$, 
	$
	\inf_{a\in \R} 
	[ 
	\sup_{s\in [0,T]}\sup_{y=(y_1,\ldots,y_d)\in\R^d} (e^{a(|y_1|^2+\ldots+|y_d|^2)}|u_d(s,y)|) 
	] 
	< \infty
	$,
and 
	\begin{equation} \label{cor_lip:PDE}
	(\tfrac{\partial}{\partial t}u_d)(t,x) 
	= 
	(\Delta_x u_d)(t,x) 
	+ 
	f(u_d(t,x)),
	\end{equation} 
let 
	$(\Omega,\mathcal{F},\P)$ 
be a probability space, 
let
	$\mathcal{R}^{\theta}\colon \Omega \to [0,1]$, $\theta\in\Theta$, 
be independent $\mathcal{U}_{[0,1]}$-distributed random variables, 
let 
	$W^{d,\theta}\colon [0,T]\times\Omega\to\R^d$,
	$d\in\N$, $\theta\in\Theta$,
be independent standard Brownian motions, 
assume that 
	$(\mathcal{R}^{\theta})_{\theta\in\Theta}$ 	
and 
	$(W^{d,\theta})_{(d,\theta)\in\N\times\Theta}$ 
are independent, 
let 
	$R^{\theta}_t\colon \Omega \to [0,t]$, $\theta\in\Theta$, $t\in [0,T]$, 
satisfy for every 
	$t\in [0,T]$ 
that 
	$R^{\theta}_t = t \mathcal{R}^{\theta}$,
for every 
	$d\in\N$, 
	$t\in [0,T]$, 
	$s\in [t,T]$,
	$x\in \R^d$, 
	$\theta\in \Theta$ 
let 
	$
	X^{d,\theta}_{t,s,x}\colon\Omega\to\R^d
	$
satisfy 
	$
	X^{d,\theta}_{t,s,x} 
	= 
	x + \sqrt{2}
	(W^{d,\theta}_s - W^{d,\theta}_t)
	$, 
let 
	$
	U^{d,\theta}_{n,M,r}\colon [0,T]\times\R^d\times\Omega\to\R$, 
	$\theta\in\Theta$, 
	$d,M\in\N$,
	$n\in\N_0$,
	$r\in (0,\infty)$, 
satisfy for every 
	$d,n,M\in\N$,
	$\theta\in\Theta$,
	$r\in (0,\infty)$,
	$t\in [0,T]$, 
	$x\in \R^d$ 
that
	$U^{d,\theta}_{0,M,r}(t,x) = 0$
and 
	\begin{equation} 
	\begin{split}
	& 
	U^{d,\theta}_{n,M,r}(t,x) 
	=
	\sum_{k=1}^{n-1} \frac{t}{M^{n-k}} 
	\Bigg[ 
	\sum_{m=1}^{M^{n-k}} 
	\Big(
	\fr\big(
	U^{d,(\theta,k,m)}_{k,M,r}(
	R^{(\theta,k,m)}_{t}, 
	X^{d,(\theta,k,m)}_{R^{(\theta,k,m)}_{t},t,x}
	)
	\big)
	\\
	& 
	-
	\fr\big(U^{d,(\theta,-k,m)}_{k-1,M,r}(R^{(\theta,k,m)}_{t}, 
	X^{d,(\theta,k,m)}_{R^{(\theta,k,m)}_{t},t,x})
	\big)
	\Big)
	\Bigg]
	+ 	\frac{1}{M^n}\left[ 
	\sum_{m=1}^{M^n} 
	\left( 
	u_d(0,X^{d,(\theta,0,-m)}_{0,t,x}) 
	+ 
	t \, f(0)
	\right)
	\right]
	\!, 
	\end{split} 
	\end{equation} 
and let 
	$\mathfrak{C}_{d,n,M}\in\N_0$, $d,M\in\N$, $n\in\N_0$, 
satisfy for every 
	$d,n,M\in\N$ 
that 
	$\mathfrak{C}_{d,0,M} = 0$ 
and 
	\begin{equation} \label{computational_cost}
	\mathfrak{C}_{d,n,M} 
	\leq 
	(2d+1)M^n 
	+ 
	\sum_{l=1}^{n-1} M^{n-l} \,(d+1+\mathfrak{C}_{d,l,M} 
	+ 
	\mathfrak{C}_{d,l-1,M}).   
	\end{equation} 
Then 
	\begin{enumerate}[(i)] 
		\item\label{cor_lip:item1} 
	it holds for every 
		$d,M\in\N$, 
		$n\in\N_0$, 
		$r\in [\rho,\infty)$
	that 
		\begin{equation} 
		\begin{split}
		&
		\sup_{x\in\R^d}
		\left(\Exp{
			|
			U^{d,0}_{n,M,r}(T,x)
			-
			u_d(T,x)
			|^2}\right)^{\!\nicefrac12}
		\\
		& 
		\leq 
		e^{L(r)T}
		\left[\sup_{x\in\R^d} |u_d(0,x)| + T |f(0)| \right]
		\left[
		\frac{
			e^{\nicefrac{M}{2}}(1+2L(r)T)^n
		}{M^{\nicefrac{n}{2}}}
		\right] 
		\end{split} 
		\end{equation} 
	and 
		\item \label{cor_lip:item2} 
	there exist 
		$\mathfrak{N}\colon (0,1] \to \N$ 
	and 
		$\mathfrak{c}\colon (0,\infty) \to [0,\infty)$
	such that for every 
		$d\in\N$, 
		$\delta\in (0,\infty)$,
		$\varepsilon \in (0,1]$ 
	it holds that 
		$
		\sum_{n = 1}^{\mathfrak{N}_{\varepsilon}+K}
		\mathfrak{C}_{d,n,n} 
		\leq d \mathfrak{c}_{\delta} \varepsilon^{-(2+\delta)} 
		$
	and 
	\begin{equation}
		\sup_{n\in [\mathfrak{N}_{\varepsilon},\infty)\cap\N}
		\left[
		\sup_{x \in\R^d}
		\left(
		\Exp{|U^{d,0}_{n,n,\varrho_n}(T,x) - u_d(T,x)|^2} 
		\right)^{\!\nicefrac12}
		\right]
		\leq \varepsilon.
		\end{equation}
	\end{enumerate} 
\end{cor}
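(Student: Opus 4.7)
The plan is to derive \cref{cor_lip} directly from \cref{thm:convergenceLocalLipschitz_forward_formulation} (for \eqref{cor_lip:item1}) and \cref{thm:main_theorem_forward_formulation} (for \eqref{cor_lip:item2}) by embedding the space-time homogeneous nonlinearity $f\colon\R\to\R$ into the general framework of \cref{sec:algorithm,sec:complexity}. Concretely, I would introduce the constant-in-$(t,x)$ nonlinearities $f_d,\fdr\in C([0,T]\times\R^d\times\R,\R)$, $d\in\N$, $r\in(0,\infty)$, defined by $f_d(t,x,w):=f(w)$ and $\fdr(t,x,w):=\fr(w)$. Under this identification the MLP recursion of \cref{cor_lip} reproduces \eqref{main_theorem_forward_formulation:transformed_mlp} verbatim, since $f_d(R^{(\theta,0,m)}_t,X^{d,(\theta,0,m)}_{R^{(\theta,0,m)}_t,t,x},0)=f(0)$ and $\fdr(t,x,w)=\fr(w)$ do not depend on the Brownian and uniform samples.

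The hypotheses of the two ambient results then translate term by term: the coercivity, the local Lipschitz estimate, and the integrability $\E[\int_0^t |f_d(s,X^{d,0}_{s,t,x},0)|\,ds] = t|f(0)|<\infty$ all transfer immediately, while the remaining assumptions on $u_d$ (the bound $e^{cT}(1+|u_d(0,x)|^2)^{\nicefrac12}\le\rho$, the Gaussian-type growth condition, $C^{1,2}$-regularity on $(0,T]\times\R^d$, and the PDE) are imported without change. The crucial observation is that
\begin{equation}
\Bigl(\int_0^T \E\bigl[|f_d(s,X^{d,0}_{s,T,x},0)|^2\bigr]\,ds\Bigr)^{\!\nicefrac12} \;=\; \sqrt{T}\,|f(0)|
\end{equation}
holds uniformly in $d\in\N$ and $x\in\R^d$, so the polynomial growth assumption of \cref{thm:main_theorem_forward_formulation} is satisfied with $p=0$ and $\gamma=\sqrt{T}|f(0)|$, in which case the admissible set $\mathfrak{X}_d$ appearing in (the proof of) that theorem coincides with all of $\R^d$.

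Part \eqref{cor_lip:item1} is then obtained by applying \cref{thm:convergenceLocalLipschitz_forward_formulation} pointwise in $x\in\R^d$, bounding $(\E[|u_d(0,X^{d,0}_{0,T,x})|^2])^{\nicefrac12}\le \sup_{y\in\R^d}|u_d(0,y)|$ (finite by the uniform bound $e^{cT}(1+|u_d(0,\cdot)|^2)^{\nicefrac12}\le\rho$), substituting the identity above, and taking the supremum over $x$. Part \eqref{cor_lip:item2} is obtained by feeding these identifications into \cref{thm:main_theorem_forward_formulation}: the complexity bound becomes $\mathfrak{c}_\delta d^{1+p(2+\delta)}\varepsilon^{-(2+\delta)} = d\,\mathfrak{c}_\delta\,\varepsilon^{-(2+\delta)}$ because $p=0$, and the corresponding uniform $L^2$-error estimate holds for every $x\in\R^d$ since $\mathfrak{X}_d=\R^d$. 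No substantive obstacle arises; the only point requiring mild care is verifying that the two summation blocks in \eqref{main_theorem_forward_formulation:transformed_mlp} literally reduce to those in the MLP scheme of \cref{cor_lip}, which is automatic once one observes that both the truncated and non-truncated nonlinearities are $(t,x)$-independent.
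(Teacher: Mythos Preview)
Your proposal is correct and matches the paper's own proof essentially line for line: both introduce the constant-in-$(t,x)$ nonlinearities $F_d(t,x,w)=f(w)$, verify the integrability identities (in particular $(\int_0^T \E[|F_d(s,X^{d,0}_{s,T,x},0)|^2]\,ds)^{1/2}=\sqrt{T}|f(0)|$), and then invoke \cref{thm:main_theorem_forward_formulation} with $p=0$, $\gamma=\sqrt{T}|f(0)|$. The only cosmetic difference is that you explicitly appeal to \cref{thm:convergenceLocalLipschitz_forward_formulation} for item~\eqref{cor_lip:item1}, whereas the paper folds both items into a single citation of \cref{thm:main_theorem_forward_formulation}; your version is arguably cleaner since the pointwise error bound for general $M$ is not literally part of the statement of \cref{thm:main_theorem_forward_formulation}.
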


\begin{proof}[Proof of \cref{cor_lip}]
Throughout this proof let 
	$F_d\colon [0,T]\times\R^d\times\R \to \R$, $d\in\N$, 
be the functions which satisfy for every 	
	$d\in\N$, 
	$t\in [0,T]$, 
	$x\in\R^d$, 
	$w\in\R$ 
that 
	\begin{equation}\label{cor_lip:F_d}
	F_d(t,x,w) = f(w). 
	\end{equation}
Observe that the fact that for every 
	$d\in\N$, 
	$x\in \R^d$ 
it holds that 
	$|u_d(0,x)| \leq \rho < \infty$ 
and \eqref{cor_lip:F_d} ensure that for every 
	$d\in\N$, 
	$t\in [0,T]$, 
	$x\in\R^d$, 
	$w\in\R$
it holds that
	$\fr(w)=F_d(t,x,\min\{r,\max\{-r,w\}\})$, 
	$ 
	\EXP{\int_0^t |F_d(s,X^{d,0}_{s,t,x},0)|\,ds} 
	= t |f(0)| < \infty
	$, 
	$(\EXP{\int_0^T |F_d(s,X^{d,0}_{s,T,x},0)|^2\,ds})^{\nicefrac12}
	=
	\sqrt{T}|f(0)|$, 
and 	
	\begin{equation} 
	\begin{split}
	& 
	\left( \Exp{|u_d(0,X^{d,0}_{0,T,x})|^2} \right)^{\!\nicefrac12} 
	+ 
	\sqrt{T} 
	\left| 
	\int_0^T
	\Exp{|F_d(s,X^{d,0}_{s,T,x},0)|^2}\!\,ds
	\right|^{\nicefrac12}
	\leq 
	\sup_{\xi\in\R^d} 
	|u_d(0,\xi)| + T|f(0)|. 
	\end{split}
	\end{equation} 
This and \cref{thm:main_theorem_forward_formulation} (with 
	$\rho=\rho$, 
	$T=T$, 	
	$c=c$, 
	$\gamma=\sqrt{T}|f(0)|$,
	$p=0$, 
	$K=K$,
	$\Theta=\Theta$, 
	$L=L$, 
	$f_d=F_d$, 
	$\fdr=([0,T]\times\R^d\times\R\ni (t,x,y) \mapsto \fr(y)\in\R)$, 
	$u_d=u_d$, 
	$\varrho=\varrho$, 
	$(\Omega,\cF,\P)=(\Omega,\cF,\P)$, 
	$\mathcal{R}^{\theta}=\mathcal{R}^{\theta}$, 
	$W^{d,\theta}=W^{d,\theta}$, 
	$R^{\theta}=R^{\theta}$, 
	$X^{d,\theta}_{t,s,x}=X^{d,\theta}_{t,s,x}$, 
	$U^{d,\theta}_{n,M,r}(t,x)=U^{d,\theta}_{n,M,r}(t,x)$, 
	$\mathfrak{C}_{d,n,M}=\mathfrak{C}_{d,n,M}$
for 
	$d,M\in\N$,
	$\theta\in\Theta$, 
	$n\in\N_0$,
	$r\in (0,\infty)$, 
	$t\in [0,T]$, 
	$s\in [t,T]$, 
	$x\in \R^d$ 	
in the notation of \cref{thm:main_theorem_forward_formulation}) ensure that 
\begin{enumerate}[(I)]
	\item 
	for every 
		$d,M\in\N$, 
		$n\in\N_0$, 
		$r\in [\rho,\infty)$,
		$x\in\R^d$
	it holds that 
		\begin{equation} 
		\begin{split}
		& 
		\left( \Exp{|U^{d,0}_{n,M,r}(T,x) - u_d(T,x)|^2} \right)^{\!\nicefrac12} 
		\\
		& 
		\leq 
		e^{L(r)T}
		\left[ 
		\sup_{\xi\in\R^d} |u_d(0,\xi)| 
		+ 
		T |f(0)| 
		\right] 
		\left[
		\frac{e^{\nicefrac{M}{2}}(1+2L(r)T)^n}{M^{\nicefrac{n}{2}}} 
		\right]
		\end{split}
		\end{equation} 
		and 
	\item there exist 
		$\mathfrak{N}\colon (0,1]\to\N$ 
	and 
		$\mathfrak{c}\colon (0,\infty)\to [0,\infty)$
	such that for every 
		$d\in\N$, 
		$\delta\in (0,\infty)$, 
		$\varepsilon \in (0,1]$, 
		$x\in\R^d$ 
    it holds that 
    \begin{equation}
    \begin{split}
	\left[\smallsum\limits_{n = 1}^{\mathfrak{N}_{\varepsilon}+K}
    	\mathfrak{C}_{d,n,n} 
    \right]
    	\leq 
    	\mathfrak{c}_{\delta} d \varepsilon^{-(2+2\delta)}
    	\quad\text{and}\quad
	\sup_{n\in [\mathfrak{N}_{\varepsilon},\infty)\cap\N} 
		\left( 
		\Exp{|U^{d,0}_{n,n,\varrho_n}(T,x) - u_d(T,x)|^2}
		\right)^{\!\nicefrac12} 
		\leq \varepsilon. 
		\end{split}
	\end{equation} 
	\end{enumerate}
This establishes Items~\eqref{cor_lip:item1} and \eqref{cor_lip:item2}. The proof of \cref{cor_lip} is thus completed. 
\end{proof}


\begin{cor} \label{cor:polynomially_growing}
Let
	$T \in (0,\infty)$,
	$c \in [0,\infty)$, 
	$K \in \N_0$, 
	$\Theta = \cup_{n\in\N} \Z^n$, 
	$f\in C(\R,\R)$, 
	$(\mathbf{f}_n)_{n\in\N}\subseteq C(\R,\R)$, 
let 
	$\varrho \colon \N \to (0,\infty)$ 
satisfy
	$\limsup_{n\to\infty} (\frac{\varrho_n}{\ln(\ln(n))}) < \infty =\liminf_{n\to\infty} \varrho_n$, 	
assume for every 
	$n\in\N$,
	$u,v\in\R$ 
that 
	$|f(u)-f(v)| \leq c(1+|u|^c+|v|^c)|u-v|$,  
	$vf(v)\leq c(1+v^2)$,  
and 
	$\mathbf{f}_n(v) = f(\min\{\varrho_n,\max\{-\varrho_n,v\}\})$, 
let 
	$u_d\in C([0,T]\times\R^d,\R)$, $d\in\N$,
satisfy for every 
	$d\in\N$,
	$t\in (0,T]$,
	$x\in\R^d$ 
that 
	$
	\inf_{a\in \R} 
	[ 
	\sup_{s\in [0,T]}\sup_{y=(y_1,\ldots,y_d)\in\R^d} (e^{a(|y_1|^2+\ldots+|y_d|^2)}|u_d(s,y)|) 
	] 
	< \infty
	$,
	$|u_d(0,x)| \leq c$,  
	$u_d|_{(0,T]\times\R^d}\in C^{1,2}((0,T]\times\R^d,\R)$, 
and 
	\begin{equation}
	(\tfrac{\partial}{\partial t}u_d)(t,x) 
	= 
	(\Delta_x u_d)(t,x) 
	+ 
	f(u_d(t,x)),
	\end{equation} 
let 
	$(\Omega,\mathcal{F},\P)$ 
be a probability space, 
let
	$\mathcal{R}^{\theta}\colon \Omega \to [0,1]$, $\theta\in\Theta$, 
be independent $\mathcal{U}_{[0,1]}$-distributed random variables, 
let 
	$W^{d,\theta}\colon [0,T]\times\Omega\to\R^d$, 
	$d\in\N$,
	$\theta\in\Theta$,
be independent standard Brownian motions, 
assume that 
	$(\mathcal{R}^{\theta})_{\theta\in\Theta}$ 
and
	$(W^{d,\theta})_{(d,\theta)\in\N\times\Theta}$ 
are independent, 
let 
	$R^{\theta}\colon [0,T] \times \Omega \to [0,T]$, $\theta\in\Theta$,  
satisfy for every 
	$\theta\in\Theta$,
	$t\in [0,T]$
that 
	$R^{\theta}_t = t \mathcal{R}^{\theta}$,
for every 
	$d\in \N$,
	$t\in [0,T]$, 
	$s\in [t,T]$,
	$x\in \R^d$, 
	$\theta\in \Theta$ 
let 
	$
	X^{d,\theta}_{t,s,x}\colon\Omega\to\R^d
	$
satisfy 
	$
	X^{d,\theta}_{t,s,x} 
	= 
	x + \sqrt{2}(W^{d,\theta}_s - W^{d,\theta}_t)
	$, 
let 
	$
	U^{d,\theta}_{n,M}\colon [0,T]\times\R^d\times\Omega\to\R$, 
	$d,M\in\N$, 
	$\theta\in\Theta$, 
	$n\in\N_0$, 
satisfy for every 
	$d,n,M\in\N$,
	$\theta\in\Theta$,
	$t\in [0,T]$, 
	$x\in \R^d$ 
that
	$U^{d,\theta}_{0,M}(t,x) = 0$ 
and 
	\begin{equation} \label{polynomially_growing:mlp_scheme}
	\begin{split}
	& 
	U^{d,\theta}_{n,M}(t,x) 
	= 
	\sum_{k=1}^{n-1} \frac{t}{M^{n-k}} 
	\Bigg[ 
	\sum_{m=1}^{M^{n-k}} 
	\bigg(
	\mathbf{f}_{M}\big( 
	U^{d,(\theta,k,m)}_{k,M}(R^{(\theta,k,m)}_{t}, 
	X^{d,(\theta,k,m)}_{R^{(\theta,k,m)}_{t},t,x}
	)
	\big)
	\\ 
	& 
	-
	\mathbf{f}_{M}\Big( 
	U^{d,(\theta,-k,m)}_{k-1,M}\big( R^{(\theta,k,m)}_{t}, 
	X^{d,(\theta,k,m)}_{R^{(\theta,k,m)}_{t},t,x}\big)
	\Big)
	\bigg)
	\Bigg]
	+ 
	\frac{1}{M^n}\!\left[ 
	\sum_{m=1}^{M^n} 
	\left( 
	u_d(0,X^{d,(\theta,0,-m)}_{0,t,x}) 
	+ 
	t \, f(0)
	\right)\!
	\right]\!, 
	\end{split}
	\end{equation}
and let 
	$\mathfrak{C}_{d,n,M}\in\N_0$, 
	$d,M\in\N$, 
	$n\in\N_0$, 
satisfy for every 
	$d,n,M\in\N$ 
that 
	$\mathfrak{C}_{d,0,M} = 0$ 
and 
	\begin{equation} \label{cor_polynomially_growing:comp_cost}
	\mathfrak{C}_{d,n,M} 
	\leq 
	(2d+1)M^n 
	+ 
	\sum_{l=1}^{n-1} M^{n-l} \,(d+1+\mathfrak{C}_{d,l,M} 
	+ 
	\mathfrak{C}_{d,l-1,M}).   
	\end{equation}
Then there exist
	$\mathfrak{N} \colon (0,1] \to \N$ 
and 
	$\mathfrak{c}\colon (0,\infty) \to [0,\infty)$
such that for every 
	$d\in\N$, 
	$\delta \in (0,\infty)$,
	$\varepsilon\in (0,1]$ 
it holds that 
 	\begin{equation}
	\begin{split}
	&
	\left[ 
	\smallsum\limits_{n =  1}^{\mathfrak{N}_{\varepsilon}+K} 
	\mathfrak{C}_{d,n,n}
	\right] 
	\leq c d \varepsilon^{-(2+\delta)}
	\quad\text{and}\quad
	\sup_{n\in [\mathfrak{N}_{\varepsilon},\infty)\cap\N} 
	\left[ 
	\sup_{x\in\R^d}
	\left(  
	\Exp{
		\big|
		U^{d,0}_{n,n}(T,x)
		-
		u_d(T,x) 
		\big|^2}  
	\right)^{\!\nicefrac12} 
	\right]
	\leq \varepsilon. 
	\end{split}
	\end{equation}
\end{cor}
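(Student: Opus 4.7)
The plan is to reduce the claim to a direct application of \cref{cor_lip} by constructing the parameters $\rho$ and $L$ and verifying the decay hypothesis on $\varrho$. The family of nonlinearities does not depend on $d$, the coercivity constant is $c$, and the initial data satisfy the uniform bound $|u_d(0,\cdot)|\le c$, so the reduction is essentially a matter of bookkeeping.

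First, I would set $\rho:=e^{cT}\sqrt{1+c^2}$. For every $d\in\N$ and $x\in\R^d$ one then has $e^{cT}(1+|u_d(0,x)|^2)^{\nicefrac12}\le\rho$, which is exactly the bound \cref{cor_lip} requires on the initial data. Next, the polynomial growth estimate $|f(u)-f(v)|\le c(1+|u|^c+|v|^c)|u-v|$ restricts on $[-r,r]$ to $|f(u)-f(v)|\le c(1+2r^c)|u-v|$, so $L(r):=c(1+2r^c)$ serves as a valid local Lipschitz function.

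Second, I would verify the decay hypothesis $\limsup_{n\to\infty}(L(\varrho_n)/\ln(n)+1/\varrho_n)=0$. The term $1/\varrho_n\to 0$ follows from $\liminf_{n\to\infty}\varrho_n=\infty$. For the other term, the hypothesis $\limsup_{n\to\infty}\varrho_n/\ln(\ln n)<\infty$ furnishes a constant $C\in(0,\infty)$ with $\varrho_n\le C\ln(\ln n)$ for all large $n$, so
\begin{equation*}
\frac{L(\varrho_n)}{\ln(n)}\le \frac{c+2cC^c(\ln(\ln n))^c}{\ln(n)}\xrightarrow[n\to\infty]{}0,
\end{equation*}
since $(\ln\ln n)^c=o(\ln n)$ for every fixed $c\in[0,\infty)$. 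This is the one step where the specific doubly logarithmic shape of the hypothesis is used, and it is the main (albeit mild) technical point.

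Third, the MLP scheme \eqref{polynomially_growing:mlp_scheme} uses $\mathbf{f}_M(v)=f(\min\{\varrho_M,\max\{-\varrho_M,v\}\})$, which is precisely the truncated nonlinearity $\fr(v)$ at $r=\varrho_M$ in the notation of \cref{cor_lip}. Consequently $U^{d,\theta}_{n,n}$ in the current statement equals $U^{d,\theta}_{n,n,\varrho_n}$ there, and all remaining hypotheses of \cref{cor_lip} (the PDE, $C^{1,2}$-smoothness of $u_d$ on $(0,T]\times\R^d$, the exponential growth condition, the coercivity, and the computational cost recursion) carry over verbatim from the hypotheses of the present corollary. Item~(ii) of \cref{cor_lip} then supplies functions $\mathfrak{N}$ and $\mathfrak{c}$ delivering the required cost bound $\sum_{n=1}^{\mathfrak{N}_\varepsilon+K}\mathfrak{C}_{d,n,n}\le \mathfrak{c}_\delta\, d\, \varepsilon^{-(2+\delta)}$ together with the root mean square error bound, so the proof is complete after identifying the constant in the target inequality with $\mathfrak{c}_\delta$.
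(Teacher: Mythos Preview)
Your proof is correct and follows essentially the same route as the paper: define $L(r)=c(1+2r^c)$, verify $\limsup_{n\to\infty}L(\varrho_n)/\ln(n)=0$ via the doubly logarithmic bound, identify $\mathbf{f}_M$ with $\fr$ at $r=\varrho_M$ so that $U^{d,\theta}_{n,n}=U^{d,\theta}_{n,n,\varrho_n}$, and invoke \cref{cor_lip}(ii). The paper carries out the identification $U^{d,\theta}_{n,M}=V^{d,\theta}_{n,M,\varrho_M}$ by an explicit induction on $n$, whereas you simply observe that the recursion and base case coincide once $\fr=\mathbf{f}_M$; the induction is indeed trivial, so this is only a difference in level of detail.
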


\begin{proof}[Proof of \cref{cor:polynomially_growing}] 
Throughout this proof let 
	$L\colon (0,\infty) \to [0,\infty)$ 
satisfy for every 
	$r\in (0,\infty)$ 
that 
	$L(r) = c ( 1 + 2r^c )$, 
let 
	$F_r\colon\R\to\R$, $r\in (0,\infty)$, 
be the functions which satisfy for every 
	$r\in (0,\infty)$, 
	$v\in \R$ 
that 
	$
	F_r(v)=f(\min\{r,\max\{-r,v\}\})
	$, 
and let 
	$
	V^{d,\theta}_{n,M,r}\colon [0,T]\times\R^d\times\Omega\to\R$, 
		$d,M\in\N$,
		$\theta\in\Theta$, 
		$n\in\N_0$, 
satisfy for every 
	$d,n,M\in\N$,
	$\theta\in\Theta$,
	$r\in (0,\infty)$, 
	$t\in [0,T]$, 
	$x\in \R^d$ 
that
	$V^{d,\theta}_{n,M,r}(0,x)=0$ 
and
	\begin{equation} \label{cor_polynomially_growing:U}
	\begin{split}
	& 
	V^{d,\theta}_{n,M,r}(t,x)
	= 
	\sum_{k=1}^{n-1} \frac{t}{M^{n-k}} 
	\Bigg[ 
	\sum_{m=1}^{M^{n-k}} 
	\bigg(
	F_r\big( 
	V^{d,(\theta,k,m)}_{k,M,r}(R^{(\theta,k,m)}_{t}, 
	X^{d,(\theta,k,m)}_{R^{(\theta,k,m)}_{t},t,x}
	)
	\big)
	\\ 
	& 
	-
	F_r\Big( 
	V^{d,(\theta,-k,m)}_{k-1,M,r}\big( R^{(\theta,k,m)}_{t}, 
	X^{d,(\theta,k,m)}_{R^{(\theta,k,m)}_{t},t,x}\big)
	\Big)
	\bigg)
	\Bigg]
	+ 
	\frac{1}{M^n}\!\left[ 
	\sum_{m=1}^{M^n} 
	\left( 
	u_d(0,X^{d,(\theta,0,-m)}_{0,t,x}) 
	+ 
	t \, f(0)
	\right)\!
	\right]\!. 
	\end{split}
	\end{equation} 
Next observe that the hypothesis that 
	$\limsup_{n\to\infty} (\frac{\varrho_n}{\ln(\ln(n))}) < \infty$
implies that there exists 
	$\gamma\in (0,\infty)$ 
which satisfies that for every 
	$n \in [3,\infty)\cap\N$ 
it holds that 
	$
	\varrho_n \leq \gamma \ln(\ln(n))
	$.  
This yields that 
	\begin{equation} \label{cor_polynomially_growing_growth_of_varrho_n}
	\begin{split}
	& \limsup_{n\to\infty} 
	\left[ 
	\frac{L(\varrho_n)}{\ln(n)} 
	\right] 
	\leq  
	\limsup_{n\to\infty} 
	\left[ 
	\frac{1 + 2 (\gamma \ln(\ln(n)))^c}{\ln(n)} 
	\right] 
	\\
	& 
	\leq 
	\limsup_{n\to\infty} 
	\left[\frac{1}{\ln(n)} \right] 
	+ 
	\limsup_{n\to\infty} 
	\left[\frac{2(\gamma\ln(\ln(n)))^c}{\ln(n)} \right] = 
	2 \gamma^c
	\limsup_{n\to\infty} 
	\left[ 
	\frac{(\ln(\ln(n)))^c}{\ln(n)}
	\right] 
	= 
	0.
	\end{split}
	\end{equation} 
Next let 
	$A \subseteq \N_0$ 
be the set given by 
	\begin{equation} \label{cor_polynomially_growing:A}
	A = \left\{n \in \N \colon 
	\begin{array}{c}
	\text{For all}~
	d,M\in\N,
	\theta\in\Theta,
	k\in\N_0\cap [0,n-1],   
	t\in [0,T],
	x\in\R^d\\
	\text{it holds that}~ 
	U^{d,\theta}_{k,M}(t,x) 
	= 
	V^{d,\theta}_{k,M,\varrho_M}(t,x) 
	\end{array} \right\}.
	\end{equation}
Note that the fact that for every 
	$d,M\in \N$, 
	$\theta\in\Theta$, 
	$t\in [0,T]$, 
	$x\in \R^d$, 
	$r\in (0,\infty)$ 
it holds that 
	$V^{d,\theta}_{0,M,r}(t,x) 
	= 
	0
	= 
	U^{d,\theta}_{0,M}(t,x)$ 
ensures that 
	$1 \in A$. 
Moreover, note that \eqref{polynomially_growing:mlp_scheme}, \eqref{cor_polynomially_growing:U}, and \eqref{cor_polynomially_growing:A} ensure that for every 
	$d,M\in \N$, 
	$\theta\in\Theta$,
	$n \in A$, 
	$t\in [0,T]$, 
	$x\in \R^d$	
it holds that 
	\begin{align}
	\nonumber 
	&
	U^{d,\theta}_{n,M}(t,x) 
	= 
	\sum_{k=1}^{n-1} \frac{t}{M^{n-k}} 
	\Bigg[ 
	\sum_{m=1}^{M^{n-k}} 
	\bigg(
	\mathbf{f}_{M}\big( 
	U^{d,(\theta,k,m)}_{k,M}(R^{(\theta,k,m)}_{t}, 
	X^{d,(\theta,k,m)}_{R^{(\theta,k,m)}_{t},t,x}
	)
	\big)
	\\ \nonumber
	& 
	-
	\mathbf{f}_{M}\Big( 
	U^{d,(\theta,-k,m)}_{k-1,M}\big( R^{(\theta,k,m)}_{t}, 
	X^{d,(\theta,k,m)}_{R^{(\theta,k,m)}_{t},t,x}\big)
	\Big)
	\bigg)
	\Bigg]
	+ 
	\frac{1}{M^n}\!\left[ 
	\sum_{m=1}^{M^n} 
	\left( 
	u_d(0,X^{d,(\theta,0,-m)}_{0,t,x}) 
	+ 
	t \, f(0)
	\right)
	\right] 
	\\ 
	& = \sum_{k=1}^{n-1} \frac{t}{M^{n-k}} 
	\Bigg[ 
	\sum_{m=1}^{M^{n-k}} 
	\bigg(
	F_{\varrho_M}\big( 
	V^{d,(\theta,k,m)}_{k,M,\varrho_M}(R^{(\theta,k,m)}_{t},
	X^{d,(\theta,k,m)}_{R^{(\theta,k,m)}_{t},t,x}
	)
	\big)
	\\ \nonumber
	& 
	-
	F_{\varrho_M}\Big( 
	V^{d,(\theta,-k,m)}_{k-1,M,\varrho_M}\big( R^{(\theta,k,m)}_{t}, 
	X^{d,(\theta,k,m)}_{R^{(\theta,k,m)}_{t},t,x}\big)
	\Big)
	\bigg)
	\Bigg]
	+ 
	\frac{1}{M^n}\!\left[ 
	\sum_{m=1}^{M^n} 
	\left( 
	u_d(0,X^{d,(\theta,0,-m)}_{0,t,x}) 
	+ 
	t \, f(0)
	\right)
	\right] \\ \nonumber
	& = V^{d,\theta}_{n,M,\varrho_M}(t,x). 
	\end{align}
Hence, we obtain that for every 
	$n\in A$ 
it holds that 
	$n+1\in A$. 
Combining this with the fact that 
	$1\in A$ 
and induction ensures that 
	$A=\N$. 
This yields that for every 
	$d,M\in\N$, 
	$\theta\in\Theta$,
	$n\in\N_0$, 
	$t\in [0,T]$, 
	$x\in \R^d$ 
it holds that 
	\begin{equation} 
	V^{d,\theta}_{n,M,\varrho_M}(t,x) 
	= 
	U^{d,\theta}_{n,M}(t,x).
	\end{equation} 
The fact that for all 
	$r\in (0,\infty)$, 
	$w,\mathfrak{w}\in [-r,r]$ 
it holds that $|f(w)-f(\mathfrak{w})|\leq L(r)|w-\mathfrak{w}|$, 
\eqref{cor_polynomially_growing_growth_of_varrho_n}, 
and 
\cref{cor_lip} (with 
	$\rho=\exp(T\sup_{v\in\R} (\frac{vf(v)}{1+v^2}))[1+\sup_{d\in\N}\sup_{x\in\R^d} |u_d(0,x)|^2]^{\nicefrac12}$, 
	$c=\sup_{v\in\R} (\frac{vf(v)}{1+v^2})$, 
	$T=T$,
	$K=K$,
	$\Theta=\Theta$, 
	$f=f$, 
	$\fr=\fr$, 
	$L=L$, 
	$\norm{\cdot}_d=\norm{\cdot}_d$, 
	$\varrho=\varrho$, 
	$u_d=u_d$, 
	$(\Omega,\cF,\P)=(\Omega,\cF,\P)$, 
	$\mathcal{R}^{\Theta}=\mathcal{R}^{\theta}$, 
	$W^{d,\theta}=W^{d,\theta}$, 
	$X^{d,\theta}_{t,s,x}=X^{d,\theta}_{t,s,x}$,
	$U^{d,\theta}_{n,M,r}(t,x)=V^{d,\theta}_{n,M,r}(t,x)$, 
	$\mathfrak{C}_{d,n,M}=\mathfrak{C}_{d,n,M}$ 
for 
	$d,M\in\N$, 
	$\theta\in\Theta$, 
	$n\in\N_0$, 
	$r\in (0,\infty)$, 
	$t\in [0,T]$, 
	$x\in \R^d$	
in the notation of \cref{cor_lip}) therefore guarantee that there exist 
	$\mathfrak{N}\colon (0,1] \to \N$ 
and 
	$\mathfrak{c}\colon (0,\infty) \to [0,\infty)$ 
such that for every 
	$d\in\N$,
	$\delta\in (0,\infty)$, 
	$\varepsilon \in (0,1]$ 
it holds that 
	$
	\sum_{n = 1}^{\mathfrak{N}_{\varepsilon}+K} \mathfrak{C}_{d,n,n} 
	\leq 
	\mathfrak{c}_{\delta} d \varepsilon^{-(2+2\delta)} 
	$
and 
	\begin{equation}
	\begin{split}
	& 
	\sup_{n\in [\mathfrak{N}_{\varepsilon},\infty) \cap\N} 
	\left[ 
	\sup_{x\in\R^d}
	\left( 
	\Exp{|U^{d,0}_{n,n}(T,x) - u_d(T,x)|^2}
	\right)^{\!\nicefrac12}
	\right] 
	\\
	& 
	= \sup_{n\in [\mathfrak{N}_{\varepsilon},\infty) \cap\N} 
	\left[ 
	\sup_{x\in\R^d}
	\left( 
	\Exp{|V^{d,0}_{n,n,\varrho_n}(T,x) - u_d(T,x)|^2}
	\right)^{\!\nicefrac12}
	\right]  
	\leq \varepsilon .
	\end{split}
	\end{equation} 
The proof of \cref{cor:polynomially_growing} is thus completed.
\end{proof}


\begin{cor}\label{polynomkorollar}
Let 
	$c,T\in (0,\infty)$, 
	$K\in\N_0$,
	$\Theta = \cup_{n\in\N} \Z^n$, 
for every 
	$d\in\N$ 
let 
	$\norm{\cdot}_d\colon\R^d\to[0,\infty)$ 
be a norm on $\R^d$, 
let 
	$u_d\in C([0,T]\times\R^d,\R)$, $d\in\N$,
satisfy for every 
	$d\in\N$,
	$t\in [0,T]$,
	$x\in\R^d$ 
that 
	$|u_d(0,x)| \leq c$,  
	$\inf_{a\in\R}[\sup_{s\in [0,T]}\sup_{y=(y_1,\ldots,y_d)\in\R^d}(e^{a(|y_1|^2+\ldots+|y_d|^2)} |u_d(s,y)|)] < \infty$,
	$u_d|_{(0,T]\times\R^d}\in C^{1,2}((0,T]\times\R^d,\R)$, 
and 
	\begin{equation}
	(\tfrac{\partial}{\partial t}u_d)(t,x) 
	= 
	(\Delta_x u_d)(t,x) 
	+ 
	u_d(t,x)-(u_d(t,x))^{3},
	\end{equation} 
let 
	$\varrho \colon \N \to (0,\infty)$ 
be a function which satisfies that 
	$\limsup_{n\to\infty} (\frac{\varrho_n}{\ln(\ln(n))}) < \infty =\liminf_{n\to\infty} \varrho_n$, 	
let 
	$f_n\colon \R\to\R$, $n\in\N$, 
be the functions which satisfy for every 
	$n\in\N$, 
	$v\in\R$
that 
	$f_n(v) = (\min\{\varrho_n,\max\{-\varrho_n,v\}\})-(\min\{\varrho_n,\max\{-\varrho_n,v\}\})^3$, 
let 
	$(\Omega,\mathcal{F},\P)$ 
be a probability space, 
let
	$\mathcal{R}^{\theta}\colon \Omega \to [0,1]$, $\theta\in\Theta$, 
be independent $\mathcal{U}_{[0,1]}$-distributed random variables, 
let 
	$W^{d,\theta}\colon [0,T]\times\Omega\to\R^d$, 
	$d\in\N$,
	$\theta\in\Theta$,
be independent standard Brownian motions, 
assume that 
	$(\mathcal{R}^{\theta})_{\theta\in\Theta}$ 
and 
	$(W^{d,\theta})_{(d,\theta)\in\N\times\Theta}$ 
are independent, 
let 
	$R^{\theta} \colon \Omega \times [0,T] \to [0,T]$, $\theta\in\Theta$, 
satisfy for every 
	$\theta\in\Theta$,
	$t\in [0,T]$ 
that 
	$R^{\theta}_t = t \mathcal{R}^{\theta}$,
for every 
	$d\in \N$,
	$s\in [0,T]$, 
	$t\in [s,T]$,
	$x\in \R^d$, 
	$\theta\in \Theta$ 
let 
	$
	X^{d,\theta}_{s,t}\colon\Omega\to\R^d
	$
satisfy 
	$
	X^{d,\theta}_{s,t,x} 
	= 
	x + \sqrt{2}\,(W^{d,\theta}_t - W^{d,\theta}_s)
	$, 
let 
	$
	U^{d,\theta}_{n,M}
	\colon [0,T]\times\R^d\times\Omega\to\R$, 
	$d,M\in\N$,
	$\theta\in\Theta$, 
	$n\in\N_0$, 
satisfy for every 
	$d,n,M\in\N$,
	$\theta\in\Theta$,
	$r\in (0,\infty)$,
	$t\in [0,T]$, 
	$x\in \R^d$ 
that
	$U^{d,\theta}_{0,M}(t,x) = 0$ 
and 
	\begin{equation} 
	\begin{split} 
	U^{d,\theta}_{n,M}(t,x) 
	&=  
	\sum_{k=1}^{n-1} \frac{t}{M^{n-k}} 
	\Bigg[ 
	\sum_{m=1}^{M^{n-k}} 
	\bigg(
	f_M\Big(U^{d,(\theta,k,m)}_{k,M,r}\big( 		
	R^{(\theta,k,m)}_{t}, 
	X^{d,(\theta,k,m)}_{R^{(\theta,k,m)}_{t},t,x}
	\big)\Big) 
	\\
	& 
	- f_M\Big( 
	U^{d,(\theta,-k,m)}_{k-1,M}\big( 
	R^{(\theta,k,m)}_t, X^{d,(\theta,k,m)}_{R^{(\theta,k,m)},t,x}
	\big) 
	\Big) 
	\bigg)
	\Bigg]+
	\frac{1}{M^n}\left[ 
	\sum_{m=1}^{M^n} 
	u_d(0,X^{d,(\theta,0,-m)}_{0,t,x}) 
	\right] 
	\!, 
	\end{split} 
	\end{equation} 
and let 
	$\mathfrak{C}_{d,n,M}\in\N_0$, $d,M\in\N$, $n\in\N_0$, 
satisfy for every 
	$d,n,M\in\N$ 
that 
	$\mathfrak{C}_{d,0,M} = 0$ 
and 
	\begin{equation} 
	\mathfrak{C}_{d,n,M} 
	\leq 
	(2d+1)M^n 
	+ 
	\sum_{l=1}^{n-1} M^{n-l} \,(d+1+\mathfrak{C}_{d,l,M} 
	+ 
	\mathfrak{C}_{d,l-1,M}). 
	\end{equation} 
Then there exist 
	$\mathfrak{N}\colon (0,1] \to \N$ 
and 
	$\mathfrak{c}\colon (0,\infty) \to [0,\infty)$
such that for every 
	$d\in\N$, 
	$\delta\in (0,\infty)$,
	$\varepsilon \in (0,1)$ 
it holds that 
\begin{equation}
	\begin{split}
	&
	\left[
	\smallsum\limits_{n = 1}^{\mathfrak{N}_{\varepsilon}+K} 
	\mathfrak{C}_{d,n,n}
	\right]
	\leq d \mathfrak{c}_{\delta} \varepsilon^{-(2+\delta)} 
	\quad\text{and}\quad
	\sup_{n\in [\mathfrak{N}_{\varepsilon},\infty)\cap\N}
	\left[
	\sup_{x \in\R^d}
	\left(
	\Exp{|U^{d,0}_{n,n}(T,x) - u_d(T,x)|^2} 
	\right)^{\!\nicefrac12}
	\right]
	\leq \varepsilon.
	\end{split}
	\end{equation}
\end{cor}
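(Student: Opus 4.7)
The plan is to recognise this statement as a direct specialisation of \cref{cor:polynomially_growing} with the cubic nonlinearity $f(v)=v-v^{3}$, and thus the work reduces to verifying the hypotheses of \cref{cor:polynomially_growing} for this specific $f$ and then quoting the conclusion.

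First, I would set $f\colon\R\to\R$ by $f(v)=v-v^{3}$ and observe that $f\in C(\R,\R)$. For the coercivity condition I note that for every $v\in\R$ it holds that
\begin{equation}
vf(v)=v^{2}-v^{4}\leq v^{2}\leq 1+v^{2},
\end{equation}
so the coercivity hypothesis in \cref{cor:polynomially_growing} holds with coercivity constant $1$. For the polynomial-growth Lipschitz condition, I would factor $u^{3}-v^{3}=(u-v)(u^{2}+uv+v^{2})$ to obtain for every $u,v\in\R$ that
\begin{equation}
|f(u)-f(v)|=|u-v|\,|1-u^{2}-uv-v^{2}|\leq |u-v|\bigl(1+\tfrac{3}{2}(u^{2}+v^{2})\bigr)\leq 3(1+|u|^{3}+|v|^{3})|u-v|,
\end{equation}
which establishes the polynomial growth assumption with an absolute exponent.

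Next I would note that the truncated nonlinearities $f_{n}$ defined in \cref{polynomkorollar} satisfy for every $n\in\N$, $v\in\R$ that
\begin{equation}
f_{n}(v)=\min\{\varrho_{n},\max\{-\varrho_{n},v\}\}-\bigl(\min\{\varrho_{n},\max\{-\varrho_{n},v\}\}\bigr)^{3}=f\bigl(\min\{\varrho_{n},\max\{-\varrho_{n},v\}\}\bigr),
\end{equation}
so they match the functions $\mathbf{f}_{n}$ in the hypothesis of \cref{cor:polynomially_growing}. Moreover, since $f(0)=0$, the additional summand $tf(0)$ in the MLP recursion \eqref{polynomially_growing:mlp_scheme} vanishes, so the MLP scheme in \cref{polynomkorollar} coincides with the one in \cref{cor:polynomially_growing}.

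Finally, I would choose the constant appearing in the hypothesis of \cref{cor:polynomially_growing} to be $\max\{c,3\}$, where $c$ is the constant from \cref{polynomkorollar} which simultaneously bounds $|u_{d}(0,\cdot)|$, and then apply \cref{cor:polynomially_growing} directly to obtain the existence of $\mathfrak{N}\colon(0,1]\to\N$ and $\mathfrak{c}\colon(0,\infty)\to[0,\infty)$ with the asserted computational cost bound and mean square error bound. Since every hypothesis is a straightforward polynomial identity, there is no real obstacle here; the only bookkeeping concern is making sure the constants in \cref{polynomkorollar} are compatible with those required by \cref{cor:polynomially_growing}, which is handled by enlarging $c$ if necessary.
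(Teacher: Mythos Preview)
Your proposal is correct and follows essentially the same route as the paper: verify the coercivity $vf(v)\leq 1+v^{2}$ and a polynomial-growth Lipschitz bound for $f(v)=v-v^{3}$, identify $f_{n}$ with the truncated nonlinearities $\mathbf{f}_{n}$, and invoke \cref{cor:polynomially_growing}. The only cosmetic difference is that the paper uses the sharper bound $|f(u)-f(v)|\leq 2(1+|u|^{2}+|v|^{2})|u-v|$ and hence takes $c=\max\{c,2\}$, whereas you use exponent $3$ and $c=\max\{c,3\}$; your explicit remark that $f(0)=0$ kills the $tf(0)$ summand is a helpful observation the paper leaves implicit.
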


\begin{proof}[Proof of \cref{polynomkorollar}] 
First, observe that for every 
	$r \in (0,\infty)$, 
	$v,w \in [-r,r]$ 
it holds that 
	\begin{equation} \label{polynomkorolar:lipschitz}
	\begin{split}
	|(v-v^{3})-(w-w^{3})| 
	& = 
	|(v-w)(1-w^2-wv-v^2)| 
	\leq 
	( 1 + |v|^2 + |w|^2 + |wv|)
	|v-w| 
	\\
	& \leq 
	2 ( 1 + |v|^2 + |w|^2) |v-w|. 
	\end{split}
	\end{equation} 
Moreover, note that for every 
	$v\in\R$ 
it holds that 
	$v(v-v^3) = v^2 - v^4 \leq 1+v^2$. 
This, the hypothesis that for every 
	$d\in\N$, 
	$x\in\R^d$ 
it holds that 
	$|u_d(0,x)|\leq c$, 
the fact that for every 
	$d\in\N$ 
it holds that 
	$\norm{\cdot}_d$ 
and the Euclidean norm on $\R^d$ are equivalent, 
\eqref{polynomkorolar:lipschitz}, and \cref{cor:polynomially_growing} (with 
	$T=T$, 
	$c=\max\{c,2\}$, 
	$K=K$, 
	$\Theta=\Theta$, 
	$f=(\R\ni u\mapsto u-u^{3}\in\R)$, 
	$\mathbf{f}_M=f_M$, 
	$\varrho=\varrho$, 
	$u_d=u_d$, 
	$(\Omega,\cF,\P)=(\Omega,\cF,\P)$, 
	$\mathcal{R}^{\theta}=\mathcal{R}^{\theta}$, 
	$W^{d,\theta}=W^{d,\theta}$, 
	$R^{\theta}=R^{\theta}$, 
	$X^{d,\theta}_{s,t,x}=X^{d,\theta}_{s,t,x}$, 
	$U^{d,\theta}_{n,M}(t,x)=U^{d,\theta}_{n,M}(t,x)$, 
	$\mathfrak{C}_{d,n,M}=\mathfrak{C}_{d,n,M}$ 
for 
	$d,M\in\N$, 
	$\theta\in\Theta$, 
	$n\in\N_0$, 
	$r\in (0,\infty)$, 
	$t\in [0,T]$, 
	$s\in [0,t]$, 
	$x\in \R^d$ 
in the notation of \cref{cor_lip})
ensure that there exist 
	$\mathfrak{N} \colon (0,1] \to \N$ 
and 
	$\mathfrak{c} \colon (0,\infty) \to [0,\infty)$ 
such that for every 
	$d\in\N$, 
	$\delta\in (0,\infty)$, 
	$\varepsilon \in (0,1]$ 
it holds that 
	\begin{equation}
	\left[ 
	\smallsum\limits_{n=1}^{\mathfrak{N}_{\varepsilon}+K}
	\mathfrak{C}_{d,n,n} 
	\right]
	\leq \mathfrak{c}_{\delta} d \varepsilon^{-(2+2\delta)}
	\quad\text{and}\quad
	\sup_{n\in [\mathfrak{N}_{\varepsilon},\infty)\cap\N} 
	\left[ 
	\sup_{x \in\R^d} 
	\left(  
	\Exp{|V^{d,0}_{n,n}(T,x)-v_d(T,x)|^2}
	\right)^{\!\nicefrac12}
	\right] \leq \varepsilon.
	\end{equation} 
The proof of \cref{polynomkorollar} is thus completed.
\end{proof}

\section*{Acknowledgements}

This project has been partially supported by the Deutsche Forschungsgemeinschaft (DFG) via research grant HU 1889/6-1. 
F.\,H.\,gratefully acknowledges a Research Travel Grant by the Karlsruhe House of Young Scientists (KHYS) supporting his stay at ETH Zurich.

\bibliographystyle{acm}
\bibliography{References}

\end{document}